\let\mathbb=\varmathbb
\newcommand{\ms}[1]{\textcolor{black}{#1}}
\newcommand{\us}[1]{\textcolor{black}{#1}}
\newcommand{\uss}[1]{\textcolor{black}{#1}}
\newcommand{\ic}[1]{\textcolor{black}{#1}}
\numberwithin{equation}{section}  %numberwithin goes before cleverefs when using hyperref
\crefname{example}{Ex.}{Exs.}
\newcommand{\dd}{\:d}
\newcommand{\eps}{\varepsilon}
\newcommand{\dif}{\dd}
\DeclareMathOperator*{\argmin}{argmin}
\DeclareMathOperator*{\argmax}{argmax}
\DeclareMathOperator{\diag}{diag}
\DeclareMathOperator{\dom}{dom}
\DeclareMathOperator{\gr}{gr}
\DeclareMathOperator{\Id}{Id}
\newcommand{\const}{\mathtt{c}}
\newcommand{\ce}{\mathtt{e}}
\newcommand{\ca}{\mathtt{a}}
\newcommand{\cB}{\mathtt{B}}
\newcommand{\ct}{\mathtt{t}}
\newcommand{\cs}{\mathtt{s}}
\newcommand{\B}{\mathbb{B}}
\newcommand{\Real}{\mathbb{R}}
\renewcommand{\iff}{\Leftrightarrow}
\renewcommand{\emptyset}{\varnothing}
\newcommand{\eqdef}{\triangleq}
\newcommand{\wlim}{\rightharpoonup}
\newcommand{\scrB}{\mathcal{B}}
\newcommand{\scrC}{\mathcal{C}}
\newcommand{\setC}{\mathsf{C}}
\newcommand{\scrE}{\mathcal{E}}
\newcommand{\scrF}{\mathcal{F}}
\newcommand{\scrG}{\mathcal{G}}
\newcommand{\setH}{\mathsf{H}}
\newcommand{\scrL}{\mathcal{L}}
\newcommand{\scrO}{\mathcal{O}}
\newcommand{\scrQ}{\mathcal{Q}}
\newcommand{\scrS}{\mathcal{S}}
\newcommand{\setS}{\mathsf{S}}
\newcommand{\scrW}{\mathcal{W}}
\newcommand{\scrX}{\mathcal{X}}
\renewcommand{\Pr}{\mathbb{P}}
\newcommand{\Ex}{\mathbb{E}}
\newcommand{\measP}{{\mathsf{P}}}
\newcommand{\F}{{\mathbb{F}}}
\newcommand{\Uni}{{\mathtt{Uniform}}}
\newcommand{\Normal}{{\mathtt{N}}}
\newcommand{\R}{\mathbb{R}}
\newcommand{\N}{\mathbb{N}}
\DeclareMathOperator{\NC}{\mathsf{N}}
\newcommand{\Zer}{\mathsf{Zer}}
\newcommand{\gap}{\mathsf{Gap}}		
\DeclareMathOperator{\prox}{prox}
\newcommand{\residual}{\mathsf{res}}	
\theoremstyle{plain}
\newtheorem{theorem}{Theorem}
\newtheorem{corollary}[theorem]{Corollary}
\newtheorem*{corollary*}{Corollary}
\newtheorem{lemma}[theorem]{Lemma}
\newtheorem{proposition}[theorem]{Proposition}
\theoremstyle{definition}
\newtheorem{definition}[theorem]{Definition}
\newtheorem*{definition*}{Definition}
\newtheorem{assumption}{Assumption}
\newtheorem{problem}{Problem}
\renewcommand\qed{\hfill\small$\blacksquare$}
\theoremstyle{remark}
\newtheorem{remark}{Remark}
\newtheorem*{remark*}{Remark}
\newtheorem*{notation*}{Notational remark}
\newtheorem{example}{Example}
\numberwithin{theorem}{section}
\numberwithin{remark}{section}
\numberwithin{example}{section}
\DeclarePairedDelimiter{\abs}{\lvert}{\rvert}
\DeclarePairedDelimiter{\inner}{\langle}{\rangle}
\DeclarePairedDelimiter{\norm}{\lVert}{\rVert}
\newcommand{\acli}[1]{\textit{\acl{#1}}}
\newacro{UBV}[UBV]{uniformly bounded variance}
\newacro{SO}[SO]{stochastic oracle}
\newacro{VI}{variational inequality}
\newacro{SVI}{stochastic variational inequality}
\newacro{iid}[i.i.d.]{independent and identically distributed}
\newacro{FOM}{First-order method}
\newacro{SFBF}{stochastic forward-backward-forward}
\newacro{FBF}{forward-backward-forward}
\newacro{SEG}{stochastic extragradient}
\newacro{SFB}{stochastic forward-backward}
\newacro{IFB}{inertial forward-backward}
\newacro{RIFB}{relaxed inertial forward-backward}
\newacro{GNEP}{generalized Nash equilibrium problem}
\newacro{GNE}{generalized Nash equilibrium}
\newacro{RISFBF}{relaxed inertial stochastic forward-backward-forward}
\title{Stochastic Relaxed Inertial Forward-Backward-Forward splitting for Monotone Inclusions in Hilbert spaces}
\date{\today}
\author[1]{\small Shisheng Cui}
\author[1]{\small  Uday V. Shanbhag}
\author[2]{\small Mathias Staudigl\thanks{Corresponding author}}
\author[3]{\small Phan Tu Vuong}
\affil[1]{\footnotesize Department of Industrial and Manufacturing Engineering, Pennsylvania State University, University Park, PA 16802, USA }
\affil[2]{\footnotesize Department of Data Science and Knowledge Engineering, Maastricht University, P.O. Box 616, NL\textendash 6200 MD Maastricht, The Netherlands\\
(\href{mailto:m.staudigl@maastrichtuniversity.nl}{m.staudigl@maastrichtuniversity.nl})}
\affil[3]{\footnotesize            Mathematical Sciences, University of Southampton, Highfield Southampton SO17 1BJ, United Kingdom }
\begin{document}
\maketitle

%---------------------------------------------
%%% ABSTRACT
%----------------------------------------------------------------------
\begin{abstract}
We consider monotone inclusions defined on a Hilbert space where the operator is given by the sum of a maximal monotone operator $T$ and  a single-valued monotone, Lipschitz continuous, and expectation-valued operator $V$. We draw
motivation from the seminal work by Attouch and Cabot~\cite{AttCab19,AttCab20} on relaxed inertial methods for monotone inclusions and present a stochastic extension of the relaxed inertial forward-backward-forward (RISFBF) method. Facilitated by an online variance reduction strategy via a mini-batch approach, we show that (RISFBF) produces a sequence that weakly converges to the solution set. Moreover, it is possible to estimate the rate at which the discrete velocity of the stochastic process vanishes. Under strong monotonicity, we demonstrate strong convergence, and give a detailed assessment of the iteration and oracle complexity of the scheme. When the mini-batch is raised at a geometric (polynomial) rate, the rate statement can be strengthened to a linear (suitable polynomial) rate while the oracle complexity of computing an $\epsilon$-solution improves to $\mathcal{O}(1/\epsilon)$.  Importantly, the latter claim allows for possibly biased oracles, a key theoretical advancement allowing for far broader applicability. By defining a restricted gap function based on  the Fitzpatrick function, we prove that the expected gap of an averaged sequence diminishes at a sublinear rate of $\scrO(1/k)$ while the oracle complexity of computing a suitably defined  $\epsilon$-solution is $\scrO(1/\epsilon^{1+a})$ where $a > 1$.  Numerical results on two-stage games and an overlapping group Lasso problem illustrate the advantages of our method compared to \ac{SFBF} and SA schemes.
\end{abstract}

%*************************************************************
%*****    BODY TEXT
%*************************************************************
\renewcommand{\sharp}{\gamma}
\acresetall
\allowdisplaybreaks

\section{Introduction}
\label{sec:intro}
%%% introduction
%--------------------------------------------------------------------
% !TEX root = ./RISFBF_MatProg.tex
%
\subsection{Problem formulation and motivation}
A wide range of problems in areas such as optimization, variational inequalities, game theory, signal processing, or traffic theory, can be reduced to solving inclusions involving set-valued operators in a Hilbert space $\setH$, i.e. to find a point $x\in\setH$ such that $0\in F(x)$, where $F:\setH\to 2^{\setH}$ is a set-valued operator. In many problems such inclusion problems display specific structure revealing that the operator $F$ can be additively decomposed. This leads us to the main problem we consider in this paper. 
\begin{problem}\label{problem}
Let $\setH$ be a real separable Hilbert space with inner product $\inner{\cdot,\cdot}$ and associated norm $\norm{\cdot}=\sqrt{\inner{\cdot,\cdot}}$. Let $T:\setH\to 2^{\setH}$ and $V:\setH\to\setH$ be maximally monotone operators, such that $V$ is $L$-Lipschitz continuous. The problem is to 
\begin{equation}\label{eq:MI}\tag{MI}
\text{ find }x\in\setH\text{ such that } 0\in F(x)\eqdef V(x)+T(x),
\end{equation}
\end{problem}

\noindent 
We assume that Problem \ref{problem} is well-posed:
\begin{assumption}\label{ass:exists}
$\setS\eqdef \Zer(F)\neq\emptyset$.
\end{assumption}

We are interested in the case where \eqref{eq:MI} is solved by an iterative algorithm based on a \emph{\ac{SO} representation} of the operator $V$. Specifically, when solving the problem, the algorithm calls to the \ac{SO}. At each call, the \ac{SO} receives as input a search point $x\in\setH$ generated by the algorithm on the basis of past information so far, and returns the output $\hat{V}(x,\xi)$, where $\xi$ is a random variable defined on some given probability space $(\Omega,\scrF,\Pr)$, taking values in a measurable set $\Xi$ with law $\measP=\Pr\circ\xi^{-1}$. In most parts of this paper, and the vast majority of contributions on stochastic variational problems in general, it is assumed that the output of the \ac{SO} is unbiased, 
\begin{equation}\label{eq:V}
    V(x) =\Ex_{\xi}[\hat{V}(x,\xi)]=\int_{\Xi}\hat{V}(x,z)\dif\measP(z)\qquad \forall x\in\setH.
\end{equation}
Such stochastic inclusion problems arise in numerous problems of fundamental importance in mathematical optimization and equilibrium problems, either directly or through an appropriate reformulation. An excellent survey on the existing techniques for solving problem \eqref{eq:MI} can be found in \cite{BauCom16} (in general Hilbert spaces) and \cite{FacPan03} (in the finite-dimensional case). 

\subsection{Motivating examples}
In what follows, we provide some motivating examples.

\begin{example}[{\bf Stochastic Convex Optimization}]
\label{ex:PDA}
Let $\setH_{1},\setH_{2}$ be separable Hilbert spaces. A large class of stochastic optimization problems, with wide range of applications in signal processing, machine learning and control, is given by  
\begin{equation}\label{eq:primal}
\min_{u\in\setH_{1}} \{f(u)+h(u)+g(Lu)\}
\end{equation}
where $h:\setH_{1}\to\R$ is a convex differentiable function with a Lipschitz continuous gradient $\nabla h$, represented as $h(u)=\Ex_{\xi}[\hat{h}(u,\xi)]$. $f:\setH_{1}\to(-\infty,\infty]$ and $g:\setH_{2}\to(-\infty,\infty]$ are
proper, convex lower semi-continuous functions, and $L:\setH_{1}\to\setH_{2}$
is a bounded linear operator. Problem \eqref{eq:primal} gains particular relevance in machine learning, where usually $h(u)$ is a convex data fidelity term (e.g. a \emph{population risk} functional), and $g(Lu)$ and $f(u)$ embody penalty or regularization terms; see e.g. total variation \cite{RudOshFat92}, hierarchical variable selection \cite{JacOboVer09,RocYuZha09}, and graph regularization \cite{TibRosSauZhu05,TibTay11}. Applications in control and engineering are given in \cite{AttBriCom10,latafat19}. We refer to \eqref{eq:primal} as the \emph{primal problem}. Using Fenchel-Rockafellar duality \cite[ch.19]{BauCom16}, the dual problem of \eqref{eq:primal} is given by 
\begin{equation}\label{eq:dual}
\min_{v\in\setH_{2}}\{(f+h)^{\ast}(-L^{\ast}v)+g^{\ast}(v)\},
\end{equation}
where $g^{\ast}$ is the Fenchel conjugate of $g$ and $(f+h)^{\ast}(w)=f^{\ast}\square h^{\ast}(w)=\inf_{u\in\setH_{1}}\{f^{\ast}(u)+h^{\ast}(w-u)\}$ represents the infimal convolution of the functions $f$ and $h$. Combining the primal problem \eqref{eq:primal} with its dual \eqref{eq:dual}, we obtain the saddle-point problem 
\begin{equation}\label{eq:SPP}
\inf_{u\in\setH_{1}}\sup_{v\in\setH_{2}}\{f(u)+h(u)-g^{\ast}(v)+\inner{Lu,v}\}.
\end{equation}
Following classical Karush-Kuhn-Tucker theory \cite{Roc74}, the primal-dual optimality conditions associated with \eqref{eq:SPP} are concisely represented by the following monotone inclusion: Find $\bar{x}=(\bar{u},\bar{v})\in\setH_{1}\times\setH_{2}\equiv\setH$ such that 
\begin{equation}\label{eq:KKT}
-L^{\ast}\bar{v}\in \partial f(\bar{u})+\nabla h(\bar{u}),\text{ and }L\bar{u}\in\partial g^{\ast}(\bar{v}).
\end{equation}
We may compactly summarize these conditions in terms of the zero-finding problem \eqref{eq:MI} using the operators $V$ and $T$, defined as 
\begin{align*}
    V(u,v)\eqdef ( \nabla h(u)+L^{\ast}v, -Lu ) \text{ and } %+\left[\begin{array}{cc} 0 & L^{\ast} \\ -L & 0 \end{array}\right]\cdot\left[\begin{array}{c} u \\ v\end{array}\right]\text{ and }
    T(u,v)\eqdef  \partial f(u)\times \partial g^{\ast}(v).
\end{align*}
Note that the operator $V:\setH\to\setH$ is the sum of a maximally monotone and a skew-symmetric operator. Hence, in general, it is not cocoercive\footnote{An operator $C:\setH\to\setH$ is cocoercive if there exists $\beta>0$ such that $\inner{C(x)-C(y),x-y}\geq\beta\norm{C(x)-C(y)}^{2}$.}. Conditions on the data guaranteeing Assumption \ref{ass:exists} are stated in \cite{ComPes12}. 

Since $h(u)$ is represented as an expected value, we need to appeal to simulation based methods to evaluate its gradient. Also, significant computational speedups can be made if we are able to sample the skew-symmetric linear operator $(u,v)\mapsto (L^{\ast}u,-Lu)$ in an efficient way. Hence, we assume that there exists a \ac{SO} that can provide unbiased estimator to the gradient operators $\nabla h(u)$ and $(L^{\ast}v,-Lu)$. More specifically, given the current position $x=(u,v)\in\setH_{1}\times\setH_{2}$, the oracle will output the random estimators $\hat{H}(u,\xi),\hat{L}_{u}(u,\xi),\hat{L}_{v}(v,\xi)$ such that 
\begin{align*}
\Ex_{\xi}[\hat{H}(u,\xi)]=\nabla h(u),\; \Ex_{\xi}[\hat{L}_{u}(u,\xi)]=Lu,\text{ and }\Ex_{\xi}[\hat{L}_{v}(v,\xi)]=L^{\ast}v.
\end{align*}
This oracle feedback generates the \emph{random operator} $\hat{V}(x,\xi)=(\hat{H}(u,\xi)+\hat{L}_{v}(v,\xi),-\hat{L}_{u}(u,\xi))$, which allows us to approach the saddle-point problem \eqref{eq:SPP} via simulation-based techniques. 
\end{example}

\begin{example}[{\bf Stochastic variational inequality problems}]
\label{ex:SVI}
There are a multitude of examples of monotone inclusion problems \eqref{eq:MI} where the single-valued map $V$ is not the gradient of a convex function. An important model class where this is the case is the \emph{\ac{SVI}} problem. Due to their huge number of applications, \ac{SVI}'s received enormous interest over the last several years from various communities~\cite{sajiang08,shanbhag13stochastic,MerStaIFAC19,MerStaCDC17}. This problem emerges when $V(x)$ is represented
as an expected value as in \eqref{eq:V} and $T(x)=\partial g(x)$ for some proper lower semi-continuous function $g:\setH\to(-\infty,\infty]$. In this case, the resulting structured monotone inclusion problem can be equivalently stated as 
\begin{align}\label{eq:SVI}
\text{find }\bar{x}\in\setH\text{ s.t. }\inner{V(\bar{x}),x-\bar{x}}+g(x)-g(\bar{x})\geq 0\quad\forall x\in\setH.
\end{align}
An important and frequently studied special case of \eqref{eq:SVI} arises if $g$ is the indicator function of a given closed and convex subset $\setC\subset\setH$. In this cases the set-valued operator $T$ becomes the normal cone map 
\begin{equation}\label{eq:NC}
T(x)=\NC_{\setC}(x)\eqdef\left\{\begin{array}{ll} \left\{p\in\setH\vert \sup_{y\in\setC}\inner{y-x,p}\leq 0\right\} & \text{if }x\in\setC,\\
 \emptyset & \text{else}.
 \end{array}
 \right.
 \end{equation}
 This formulation includes many fundamental problems including fixed point problems, Nash equilibrium problems and complementarity problems \cite{FacPan03}. Consequently, the equilibrium condition \eqref{eq:SVI} reduces to
\[
\text{find }\bar{x}\in\setC\text{ s.t. }\inner{V(\bar{x}),x-\bar{x}}\geq 0\quad\forall x\in\setC.
\]
\end{example}

\subsection{Contributions} 
Despite the advances in stochastic optimization and variational inequalities, the algorithmic treatment of general monotone inclusion problems under stochastic uncertainty is a largely unexplored field. This is rather surprising given the vast amount of applications of maximally monotone inclusions in control and engineering, encompassing distributed computation
of generalized Nash equilibria~\cite{BricCom13,YiPav19,FraStaGramECC20}, traffic systems~\cite{Frie93,Fuk96,Han:2019ua}, and PDE-constrained optimization~\cite{Boergens2021}. %Recently, in \cite{CuiSha20}, the authors studied the implications of randomness in problem data by defining a monotone inclusion problem \eqref{eq:MI} with expectation-valued operators via  proximal-point techniques. 
The first major aim of this manuscript is to introduce and investigate a \ac{RISFBF} method, building on an operator splitting scheme originally due to Paul Tseng \cite{Tse00}. \ac{RISFBF} produces three sequences $\{(X_{k},Y_{k},Z_{k});k\in\N\}$, defined as 
\begin{align}\tag{RISFBF}
    \begin{aligned}
    Z_{k}&=X_{k}+\alpha_{k}(X_{k}-X_{k-1}),\\
    Y_{k}& =J_{\lambda_{k}T}(Z_{k}-\lambda_{k}A_k(Z_{k})),\\
    X_{k+1}& =(1-\rho_{k})Z_{k}+\rho_{k}[Y_{k}+\lambda_{k}(A_k(Z_{k})-B_k(Y_{k}))].
\end{aligned}
\end{align}
The data involved in this scheme are explained as follows:
\begin{itemize}
\item $A_k(Z_{k})$ and $B_k(Y_{k})$ are random estimators of $V$ obtained by consulting the \ac{SO} at search points $Z_k$ and $Y_k$, respectively;
\item $(\alpha_{k})_{k\in\N}$ is a sequence of non-negative numbers regulating the memory, or \emph{inertia} of the method;
\item $(\lambda_{k})_{k\in\N}$ is a positive sequence of step-sizes;
\item $(\rho_{k})_{k\in\N}$ is a non-negative \emph{relaxation} sequence.
\end{itemize} 

If $\alpha_{k}=0$ and $\rho_{k}=1$ the above scheme reduces to the stochastic forward-backward-forward method developed in \cite{BotMerStaVu21,CuiSha20}, with important applications in Gaussian communication networks \cite{MerStaIFAC19} and dynamic user equilibrium problems \cite{GibStaThoVuo20}. However, even more connections to existing methods can be made. 

\vspace{0.2cm}
\noindent {\bf Stochastic Extragradient.} 
If $T = \{0\}$, we obtain the inertial extragradient method 
\begin{align*}
Z_{k}&=X_{k}+\alpha_{k}(X_{k}-X_{k-1}),\\
Y_{k}&=Z_{k}-\lambda_{k}A_{k}(Z_{k}),\\
X_{k+1}&=Z_{k}-\rho_{k}\lambda_{k}B_{k}(Y_{k}).
\end{align*}
If  $\alpha_{k}=0$, this reduces to a generalized extragradient method 
\begin{align*}
Y_{k}&=X_{k}-\lambda_{k}A_{k}(X_{k}),\\
X_{k+1}&=X_{k}-\lambda_{k}\rho_{k}B_{k}(Y_{k}),
\end{align*}
recently introduced in \cite{Diakonikolas:2021vz}. 

\vspace{0.2cm}
\noindent{\bf Proximal Point Method. }
If $V=0$, the method reduces to the well-known deterministic proximal point algorithm~\cite{AttCab20}, overlaid by inertial and relaxation effects. The scheme reads explicitly as 
\begin{align*}
&Z_{k}=X_{k}+\alpha_{k}(X_{k}-X_{k-1}),\\
&X_{k+1}=(1-\rho_{k})Z_{k}+\rho_{k}J_{\lambda_{k}T}(Z_{k}).
\end{align*}
%
%
%To the best of our knowledge, \ac{RISFBF} is the first SA method capable of resolving the monotone inclusion problem with a single evaluation of the
%resolvent under maximal monotonicity of $T$ and a monotone, $L$-Lipschitz continuous single-valued operator $V$. In more detail,
%
The list of our contributions reads as follows: 
\begin{itemize}
\item[(i)] \emph{Wide Applicability.} A key argument in favor of Tseng's operator splitting method is that it is provably convergent when solving structured monotone inclusions of the type \eqref{eq:MI}, without imposing \emph{cocoercivity} of the single-valued part $V$. This is a remarkable advantage relative to the perhaps more familiar and direct forward-backward splitting methods (aka projected (stochastic) gradient descent in the potential case). In particular, our scheme is applicable to the primal-dual splitting described in Example \ref{ex:PDA}. 

\item[(ii)] \emph{Asymptotic guarantees.} We show that under suitable assumptions on the relaxation sequence $(\rho_k)_{k\in\N}$, the non-decreasing inertial sequence $(\alpha_k)_{k\in\N}$, and step-length sequence $(\lambda_k)_{k\in\N}$, the generated stochastic process $(X_{k})_{k\in\N}$ weakly almost surely converges to a random variable with values in $\setS$. Assuming demiregularity of the operators yields strong convergence in the real (possibly infinite-dimensional) Hilbert space. 

\item[(iii)] \emph{Non-asymptotic linear rate under strong monotonicity of $V$.} When $V$ is strongly monotone, strong convergence of the last iterate is shown and the sequence admits a non-asymptotic linear rate of convergence {\em without a conditional unbiasedness of the \ac{SO}.} In particular, we show that the iteration and oracle complexity of computing an $\epsilon$-solution is no worse than $\scrO(\log(\tfrac{1}{\epsilon}))$ and $\scrO(\tfrac{1}{\epsilon})$, respectively.    

\item[(iv)] \emph{Non-asymptotic sublinear rate under monotonicity of $V$.}
When $V$ is monotone, by leveraging the \emph{Fitzpatrick function} \cite{Fit88,SimZal04,BauCom16} associated with the structured operator $F=T+V$, we propose a restricted gap function. We then prove that the expected gap of an averaged sequence
diminishes at the rate of $\mathcal{O}(\tfrac{1}{k})$. This allows us to derive an $\scrO(\tfrac{1}{\epsilon})$ upper bound on the iteration complexity, and an $\scrO(\tfrac{1}{\epsilon^{2+\delta}})$ upper bound (for $\delta>0)$ on the oracle complexity for computing an $\epsilon$-solution.
\end{itemize}
%
%{\bf Gaps.} 
The above listed contributions shed new light on a set of urgent open questions, which we summarize below:
%We consider developing
%relaxed inertial forward-backward-forward schemes, where significant gaps
%continue to persist.
\begin{itemize}
\item[{(i)}] {\em Absence of rigorous asymptotics.} %Despite seminal advancesby Attouch and Cabot in deterministic regimes~\cite{AttCab19,AttCab20}, 
So far no aymptotic convergence guarantees have been available when considering relaxed
inertial FBF schemes when $T$ is maximally monotone and $V$ is a single-valued
monotone expectation-valued map.  

\item[{(ii)}] {\em Unavailability of rate statements.} We are not aware of any known non-asymptotic rate guarantees for algorithms solving \eqref{eq:MI} under stochastic uncertainty. A key barrier in monotone and stochastic regimes in developing such statements has been in the availability of a residual function. Some recent progress in the special \acl{SVI} case has been made by \cite{IusJofOliTho17,Iusem:2019ws,BotMerStaVu21}, but the general Hilbert-space setting involving set-valued operators seems to be largely unexplored (we will say more in Section \ref{sec:related}). 

\item[{(iii)}] {\em Bias requirements.} A standard assumption in stochastic optimization is that the \ac{SO} generates signals which are unbiased estimators of the deterministic operator $V(x)$. Of course, the requirement that the noise process is unbiased may often fail to hold in practice. In the present Hilbert space setting this is in some sense even expected to be the rule rather than the exception, since most operators are derived from complicated dynamical systems or the optimization method is applied to discretized formulations of the original problem. See the recent work \cite{GeiPfl19,Geiersbach:2020tw} for an interesting illustration in the context of PDE-constrained optimization. Some of our results go beyond the standard unbiasedness assumption. 

\end{itemize}

\subsection{Related research} 
\label{sec:related}
Understanding the role of inertial and relaxation effects in numerical schemes is a line of research which received enormous interest over the last two decades. Below, we try to give a brief overview about related algorithms.\\% and their difference to our method.\\

\noindent {\bf Inertial, Relaxation, and Proximal schemes.} 

In the context of convex optimization, Polyak \cite{Pol64} introduced the \emph{Heavy-ball method}. This is a two-step method for minimizing a smooth convex function $f$. The algorithm reads as 

\begin{equation}\label{eq:HB}\tag{HB}
\left\{\begin{array}{l} Z_{k}=X_{k}+\alpha_{k}(X_{k}-X_{k-1}),\\
X_{k+1}=Z_{k}-\lambda_{k}\nabla f(X_{k})
\end{array}\right.
\end{equation}
The difference from the gradient method is that the base point of the gradient descent step is taken to be the extrapolated point $Z_{k}$, instead of $X_{k}$. This small difference has the surprising consequence that \eqref{eq:HB} attains optimal complexity guarantees for strongly convex functions with Lipschitz continuous gradients. Hence, \eqref{eq:HB} resembles an optimal method \cite{Nes04}. The acceleration effects can be explained by writing the process entirely in terms of a single updating equation as
\[
X_{k+1}-2X_{k}-X_{k-1}+(1-\alpha_{k})(X_{k}-X_{k-1})+\lambda_{k}\nabla f(X_{k})=0.
\]
Choosing $\alpha_{k}=1-a_{k}\delta_{k}$ and $\lambda_{k}=\gamma_{k}\delta^{2}_{k}$ for $\delta_{k}$ a small parameter, we arrive at 
\[
\frac{1}{\delta_{k}^{2}}(X_{k+1}-2X_{k}-X_{k-1})+\frac{a_{k}}{\delta_{k}}(X_{k}-X_{k-1})+\gamma_{k}\nabla f(X_{k})=0.
\]
This can be seen as a discrete-time approximation of the second-order dynamical system 
\[
\ddot{x}(t)+\frac{a}{t}\dot{x}(t)+\gamma(t)\nabla f(x(t))=0,
\]
\cite{Pol87} introduced this dynamical system in an optimization context. Since then, it has received significant attention in the potential, as well as in the non-potential case (see e.g \cite{AttMai11,BotCse16b,AttPey19} for an appetizer). As pointed out in \cite{SuBoyCan16}, if $\gamma(t)=1$, the above system reduces to a continuous version of Nesterov's fast gradient method \cite{Nes83}. Recently, \cite{GadPanSaa18} defined a stochastic version of the Heavy-ball method.\\

Motivated by the development of such fast methods for convex optimization, Attouch and Cabot \cite{AttCab19} studied a \emph{relaxed-inertial forward-backward algorithm}, reading as 
\begin{equation}\tag{RIFB}
\left\{\begin{array}{l}
    Z_{k}=X_{k}+\alpha_{k}(X_{k}-X_{k-1}),\\
    Y_k = J_{\lambda_{k}T}(Z_{k}-\lambda_{k}V(Z_{k})) \\
    X_{k+1}=(1-\rho_{k})Z_{k}+\rho_{k} Y_{k}.
    \end{array}\right.
\end{equation}
If $V=0$, this reduces to a relaxed inertial proximal point method analyzed by Attouch and Cabot~\cite{AttCab20}. If $\rho_{k}=1$, an inertial forward-backward splitting method is recovered, first studied by Lorenz and Pock \cite{LorPoc15}.\\

Convergence guarantees for the forward-backward splitting rely on the cocoercivity (inverse strong monotonicity) of the single-valued operator $V$. Example \ref{ex:PDA}, in which $V$ is given by a monotone plus a skew-symmetric linear operator, illustrates an important instance for which this assumption is not satisfied (see \cite{BricCom11} for further examples). A general-purpose operator splitting framework, relaxing the cocoercivity property, is the forward-backward-forward (FBF) method due to Tseng \cite{Tse00}. Inertial \cite{BotCse16} and relaxed-inertial~\cite{BotSedVuo00} versions of FBF have been developed. An all-encompassing numerical scheme can be compactly described as 
\begin{equation}\tag{RIFBF}
 \left\{ \begin{array}{l}
    Z_{k}=X_{k}+\alpha_{k}(X_{k}-X_{k-1}),\\
    Y_{k} =J_{\lambda_{k}T}(Z_{k}-\lambda_{k}V(Z_{k})),\\
    X_{k+1}=(1-\rho_{k})Z_{k}+\rho_{k}[Y_{k}-\lambda_{k}(V(Y_{k})-V(Z_{k}))].
\end{array}\right.
\end{equation}
Weak and strong convergence under appropriate conditions on the involved operators and parameter sequences are established in~\cite{BotSedVuo00}, but no rate statements are given. %In \cite{IutHen19}, Iutzeler and Hendrickx compare the numerical performance of inertia and relaxation techniques for fixed-point iterations. \\ 

\vspace{0.2cm}
\noindent {\bf Related work on stochastic approximation.} Efforts in
extending stochastic approximation methods to variational
inequality problems have considered standard projection
schemes~\cite{sajiang08} for Lipschitz and strongly monotone operators. Extragradient and (more generally) mirror-prox algorithms ~\cite{JudLanNemSha09,JudNemTau11} can contend with merely monotone operators, while iterative 
smoothing~\cite{YouNedSha17} schemes can cope with
with the lack of Lipschitz continuity. It is worth noting that extragradient schemes have recently assumed relevance in the training of generative adversarial networks (GANS)~\cite{gidel2018variational,mishchenko2020revisiting}. Rate
analysis for \ac{SEG} have led
to optimal rates for Lipschitz and monotone operators~\cite{JudNemTau11}, as
well as extensions to non-Lipschitzian~\cite{YouNedSha17} and
pseudomonotone settings~\cite{IusJofOliTho17,KanSha19}. To alleviate the computational complexity single-projection schemes, such as the \acf{SFBF} method~\cite{BotMerStaVu21,CuiSha20}, as well
as subgradient-extragradient and projected reflected algorithms~\cite{CuiSha21} have been studied as well. 

 \ac{SFBF} has been shown to be nearly optimal in terms
of iteration and oracle complexity, displaying significant empirical
improvements compared to \ac{SEG}. While the role of  inertia in optimization
is well documented, in stochastic splitting problems, the only contribution
we are aware of is the work by Rosasco et
al.~\cite{RosVilVu16}. In that paper asymptotic guarantees for an inertial \ac{SFB}
algorithm are presented under the hypothesis that the operators $V$ and
$T$ are maximally monotone and the single-valued operator $V$ is cocoercive.

\vspace{0.2cm}
\noindent {\bf Variance reduction approaches.} Variance-reduction schemes address the deterioration in convergence
rate and the resulting poorer practical behavior via two commonly adopted
avenues: 
\begin{itemize}
\item[(i)] If the single-valued part $V$ appears as a finite-sum (see e.g. \cite{gidel2018variational,PalBac16}), variance-reduction ideas from machine learning \cite{Gower:2020ty} can be used.
\item[(ii)] Mini-batch schemes that employ an increasing batch-size of
gradients~\cite{friedlander12hybrid} lead to deterministic rates of convergence
for stochastic strongly convex~\cite{jalilzadeh2018smoothed},
convex~\cite{jofre2019variance}, and nonconvex
optimization~\cite{GhaLanHon16}, as well as for pseudo-monotone \ac{SVI}s via extragradient~\cite{IusJofOliTho17}, and splitting schemes~\cite{BotMerStaVu21}.
\end{itemize}

In terms of run-time, improvements in iteration complexities achieved by mini-batch approaches are significant; e.g. in strongly monotone regimes, the iteration complexity improves from $\scrO(\tfrac{1}{\epsilon})$ to $\scrO(\ln(\tfrac{1}{\epsilon}))$ \cite{CuiSha20,CuiSha21}. Beyond
run-time advantages,  such avenues provide asymptotic and rate guarantees 
under possibly weaker assumptions on the problem as well as the oracle; in particular,
mini-batch schemes allow for possibly biased oracles and state-dependency
of the noise~\cite{CuiSha21}. Concerns about the sampling burdens are, in our opinion, often
overstated since such schemes are meant to provide $\epsilon$-solutions; e.g. 
if $\epsilon=10^{-3}$ and the obtained rate is $\mathcal{O}(1/k)$, then the
batch-size $m_k = \lfloor k^a \rfloor$ where $a > 1$, implying that the
batch-sizes are $\mathcal{O}(10^{3a})$, a relatively modest requirement, given
the advances in computing.
   
\vspace{0.2cm}
\noindent {\bf Outline.} The remainder of the paper is organized in five sections. After dispensing with the preliminaries in Section~\ref{sec:prelims}, we present the (RISFBF) scheme in Section~\ref{sec:Algo}. Asymptotic and rate statements are developed in Section~\ref{sec:analysis} and preliminary numerics are presented in Section~\ref{sec:Applications}. We conclude with some brief remarks in Section~\ref{sec:conclusion}.

%%%%%%%%%%%%%%%%%%%%%%%%%%%%%%%%%%%%%%%%%
%%%%%%%% PRELIMS%%%%%%%%%%%%%%%%%%%%%%%%%%%
%%%%%%%%%%%%%%%%%%%%%%%%%%%
\section{Preliminaries}
\label{sec:prelims}
%%% prelims
%--------------------------------------------------------------------
% !TEX root = ./RISFBF_MatProg.tex
%
\subsection{Notation}
Throughout, $\setH$ is a real separable Hilbert space with scalar product
$\inner{\cdot,\cdot}$, norm $\norm{\cdot}$, and Borel $\sigma$-algebra
$\scrB$. The symbols $\to$ and $\wlim$ denote strong and weak convergence,
respectively. $\Id:\setH\to\setH$ denotes the identity operator on $\setH$. Stochastic
uncertainty is modeled on a complete probability space $(\Omega,\scrF,\Pr)$,
endowed with a filtration $\F=(\scrF_{k})_{k\geq 0}$. By means of the
Kolmogorov extension theorem, we assume that $(\Omega,\scrF,\Pr)$ is large
enough so that all random variables we work with are defined on this
space. A $\setH$-valued random variable is a measurable function
$X:(\Omega,\scrF)\to (\setH,\scrB)$.  We denote by $\ell^{0}(\F)$ the set of
sequences of real-valued random variables $(\xi_{k})_{k\in\N}$ such that, for
every $k\in\N$, $\xi_{k}$ is $\scrF_{k}$-measurable. For $p\in[1,\infty]$, we
set 
\[
\ell^{p}(\F)\eqdef \left\{(\xi_{k})_{k\in\N}\in\ell^{0}(\F)\vert \sum_{k\geq 1}\abs{\xi_{k}}^{p}<\infty\quad\Pr\text{-a.s.}\right\}. 
\]
We denote the set of summable non-negative sequences by $\ell^{1}_{+}(\N)$. We now collect some concepts from monotone operator theory. For more details, we  refer the reader to \cite{BauCom16}. Let $F:\setH\to 2^{\setH}$ be a set-valued operator. Its domain and graph are defined as 
$\dom F\eqdef \{x\in\setH\vert F(x)\neq\emptyset\},\text{ and }\gr(F)\eqdef\{(x,u)\in\setH\times\setH\vert u\in F(x)\},$ respectively. Recall that an operator $F:\setH\to 2^{\setH}$ is monotone if 
\begin{equation}
\inner{ v-w,x-y}\geq 0\qquad\forall (x,v),(y,w)\in\gr(F).
\end{equation}
The set of zeros of $F$, denoted by $\Zer(T)$, defined as $\Zer(F)\eqdef \{x\in\setH\vert 0\in T(x)\}$. The inverse of $F$ is $F^{-1}:\setH\to 2^{\setH},u\mapsto F^{-1}(u)=\{x\in\setH\vert u\in F(x)\}$. The resolvent of $F$ is $J_{F}\eqdef (\Id+ F)^{-1}.$ If $F$ is maximally monotone, then $J_{F}$ is a single-valued map. We also need the classical notion of \emph{demiregularity} of an operator. 
\begin{definition}\label{def:demiregular}
An operator $F:\setH\to 2^{\setH}$ is demiregular at $x\in\dom(F)$ if for every sequence $\{(y_{n},u_{n})\}_{n\in\N}\subset\gr(F)$ and every $u\in F(y)$, we have 
\begin{align*}
[y_{n}\wlim y,v_{n}\to v]\Rightarrow y_{n}\to y.
\end{align*}
\end{definition}
The notion of demiregularity captures various properties typically used to establish strong convergence of dynamical systems. \cite{AttBriCom10} exhibit a large class of possibly set-valued operators $F$ which are demiregular. In particular, demiregularity holds if $F$ is uniformly or strongly monotone, or when $F$ is the subdifferential of a uniformly convex lower semi-continuous function $f$.

%%%%%%%%%%%%%%%%%%%%%%%%%%%%%%%%%%%%%%%%%%%%%%
%%%%%%ALGORITHM%%%%%%%%%%%%%
\section{Algorithm}
\label{sec:Algo}
%%% Stochastic algorithm
%--------------------------------------------------------------------
% !TEX root = ./RISFBF_MatProg.tex
%
Let $(\Omega,\scrF,\Pr)$ be a complete probability space. Our aim is to solve the monotone inclusion problem \eqref{eq:MI} under the following assumption: 
\begin{assumption}\label{ass:MM}
Consider Problem \ref{problem}. The set-valued operator $T:\setH\to 2^{\setH}$ is maximally monotone with an efficiently computable resolvent. The single-valued operator $V:\setH\to\setH$ is maximally monotone and $L$-Lipschitz continuous ($L>0)$ with full domain $\dom V=\setH$.
\end{assumption}
Assumption \ref{ass:MM} guarantees that the operator $F=T+V$ is maximally monotone \cite[Corolllary 24.4]{BauCom16}.\\

For numerical tractability, we make a \emph{finite-dimensional noise} assumption, common to stochastic optimization problems
in (possibly infinite-dimensional) Hilbert spaces~\cite{GunWebZha14}.\footnote{Our analysis does not rely on this assumption. It is made here only for concreteness and because it is the most prevalent one in applications.}

\begin{assumption}[Finite-dimensional Noise]
\label{ass:fdd}
All randomness can be described via a finite dimensional random variable
$\xi:(\Omega,\scrF)\to (\Xi,\scrE)$, where $\Xi\subseteq\R^{d}$ is a
measurable set with Borel sigma algebra $\scrE$. The law of the random
variable $\xi$ is denoted by $\measP$, i.e.
$\measP(\Gamma)\eqdef \Pr(\{\omega\in\Omega\vert \xi(\omega)\in\Gamma\})$ for
all $\Gamma\in\scrE$. 
\end{assumption}

To access new information about the values of the operator $V(x)$, we adopt a
\emph{stochastic approximation} (SA) approach where samples are accessed
iteratively and online: At each iteration, we assume to have access to a \acf{SO} which generates some estimate on the value of the deterministic operator $V(x)$ when the current position is $x$. This information is obtained by drawing an iid sample form the law $\measP$. These fresh samples are then used in the numerical algorithm after an initial extrapolation step delivering the point $Z_{k}=X_{k}+\alpha_{k}(X_{k}-X_{k-1})$, for some extrapolation coefficient $\alpha_{k}\in[0,1]$. Departing from $Z_{k}$, we call the \ac{SO} to retrieve the mini-batch estimator with \emph{sample rate} $m_{k}\in\N$:
\begin{equation}\label{eq:A}
A_{k}(Z_{k},\omega)\eqdef\frac{1}{m_{k}}\sum_{t=1}^{m_{k}}\hat{V}(Z_{k},\xi_{k}^{(t)}(\omega)).
\end{equation}
$\xi_{k}\eqdef (\xi_{k}^{(1)},\ldots,\xi_{k}^{m_{k}})$ is the data sample employed by the \ac{SO} to return the estimator $A_{k}(Z_{k})$. Subsequently we perform a forward-backward update with step size $\lambda_{k}>0$:
\begin{equation}\label{eq:Y}
Y_{k}=J_{\lambda_{k}T}\left(Z_{k}-\lambda_{k}A_{k}(Z_{k})\right).
\end{equation}
In the final updates, a second independent call of the \ac{SO} is made, using the data set $\eta_{k}=(\eta^{(1)}_{k},\ldots,\eta^{(m_{k})}_{k})$, yielding the estimator 
\begin{equation}\label{eq:B}
B_{k}(Y_{k},\omega)\eqdef \frac{1}{m_{k}}\sum_{t=1}^{m_{k}}\hat{V}(Y_{k},\eta_{k}^{(t)}(\omega)),
\end{equation}
and the new state   
\begin{equation}\label{eq:Xnew}
X_{k+1}=(1-\rho_{k})Z_{k}+\rho_{k}\left[Y_{k}+\lambda_{k}(A_{k}(Z_{k})-B_{k}(Y_{k}))\right]
\end{equation}
This iterative procedure generates a stochastic process $\{(Z_{k},Y_{k},X_{k})\}_{k\in\N}$, defining the \acli{RISFBF} (RISFBF) scheme. A pseudocode is given as Algorithm \ref{alg:RISFBF} below. 

%%%%RISFBF%%%%
%%%%%%%%%%%%%%%%%%%%%%%%%%%%
\begin{algorithm}[H]
 %   \scriptsize
 \caption{\ac{RISFBF}}
 \label{alg:RISFBF}
 \begin{algorithmic}
\STATE {\bf Input: } $X_{0},X_{1}\in\dom T$, Terminal time $K\in\N$, nonnegative sequences $(\alpha_{k})_{k=1}^{K},(\lambda_{k})_{k=1}^{K},(\rho_{k})_{k=1}^{K},(m_{k})_{k=1}^{K}$.
%\REPEAT 
\FOR{$k=1,\ldots,K$}
\STATE Compute $Z_{k}=X_{k}+\alpha_{k}(X_{k}-X_{k-1})$\;
\STATE  Draw an iid sample $\{\xi_{k}^{(1)},\ldots,\xi_{k}^{(m_{k})}\}$ from the law $\measP$\;
\STATE Compute $A_k(Z_{k})$ and $Y_k$ as in \eqref{eq:A} and \eqref{eq:Y}\;
\STATE  Draw an iid sample $\{\eta_{k}^{(1)},\ldots,\eta_{k}^{(m_{k})}\}$ from the law $\measP$\;
\STATE Compute $B_k(Y_{k})$ and $X_{k+1}$ as in \eqref{eq:B} and \eqref{eq:Xnew}.
\ENDFOR
\STATE Report
\begin{equation}\label{eq:output}
\bar{X}_{K}=\sum_{k=1}^{K}\frac{\rho_{k}Y_{k}}{\sum_{k=1}^{K}\rho_{k}}.
\end{equation}
\end{algorithmic}
\end{algorithm}
%%%%%%%%%%%%%%%%%%%%
Note that \ac{RISFBF} is still conceptual since we have not explained how the sequences $(\alpha_{k})_{k\in\N},(\lambda_{k})_{k\in\N}$ and $(\rho_{k})_{k\in\N}$ should be chosen. We will make this precise in our complexity analysis, starting in Section \ref{sec:analysis}.

\subsection{Equivalent form of RISFBF}
We can collect the sequential updates of \ac{RISFBF} as the fixed-point iteration
\begin{equation}\label{eq:IKM}
\left\{\begin{array}{l}
Z_{k}=X_{k}+\alpha_{k}(X_{k}-X_{k-1}),\\
X_{k+1}=Z_{k}-\rho_{k}\Phi_{k,\lambda_{k}}(Z_{k})
\end{array}\right.
\end{equation}
where $\Phi_{k,\lambda}:\setH\times\Omega\to\setH$ is the time-varying map given by 
\[
\Phi_{k,\lambda}(x,\omega)\eqdef x-\lambda A_{k}(x,\omega)- (\Id_{\setH}-\lambda B_{k}(\cdot,\omega))\circ J_{\lambda T}\circ(\Id_{\setH}-\lambda A_{k}(\cdot,\omega))(x).
\]
Formulating the algorithm in this specific way establishes the connection between RISFBF and the Heavy-ball system. Indeed, combining the iterations in \eqref{eq:IKM} in one, we get a second-order difference equations, closely resembling the structure present in \eqref{eq:HB}:
\[
\frac{1}{\rho_{k}}(X_{k+1}-2X_{k}-X_{k-1})+\frac{(1-\alpha_{k})}{\rho_{k}}(X_{k}-X_{k-1})+\Phi_{k,\lambda_{k}}(X_{k}+\alpha_{k}(X_{k}-X_{k-1}))=0.
\]
Also, it reveals the Markovian nature of the process $(X_{k})_{k\in\N}$; It is clear from the formulation \eqref{eq:IKM} that $X_{k}$ is Markov with respect to the sigma-algebra $\sigma(\{X_{0},\ldots,X_{k-1}\})$. 

\subsection{Assumptions on the stochastic oracle}
\label{sec:SO}
In order to tame the stochastic uncertainty in \ac{RISFBF}, we need to impose some
assumptions on the distributional properties of the random fields
$(A_{k}(x))_{k\in\N}$ and $(B_{k}(x))_{k\in\N}$. One crucial statistic we need to
control is the \ac{SO} variance. Define the \emph{oracle error} at a point $x \in \setH$ as
\begin{equation}\label{eq:eps}
\eps(x,\xi)\eqdef \hat{V}(x,\xi)-V(x).
\end{equation}
%%%%
\begin{assumption}[Oracle Noise]
\label{ass:SObound}
We say that the \ac{SO} 
\begin{itemize}
\item[(i)] is conditionally unbiased if $\Ex_{\xi}[\eps(x,\xi) \vert x]=0$ for all $x\in\setH$;
\item[(ii)]  enjoys a uniform variance bound: $\Ex_{\xi}[\norm{\eps(x,\xi)}^{2}\vert x]\leq\sigma^{2}$ for some $\sigma> 0$ and all $x\in\setH$. 
\end{itemize}
\end{assumption}
%%%%%%%%%%%%%%%%%
Define 
\begin{align*}
U_{k}(\omega)\eqdef\frac{1}{m_{k}}\sum_{t=1}^{m_{k}}\eps(Z_{k}(\omega),\xi_{k}^{(t)}(\omega)),\text{ and }W_{k}(\omega)\eqdef \frac{1}{m_{k}}\sum_{t=1}^{m_{k}}\eps(Y_{k}(\omega),\eta^{(t)}_{k}(\omega)).
\end{align*}
The introduction of these two processes allows us to decompose the random estimator into a mean component and a residual, so that
\begin{align*}
A_{k}(Z_{k})&=V(Z_{k})+U_{k},\text{ and }B_{k}(Y_{k})=V(Y_{k})+W_{k}
\end{align*}
If Assumption \ref{ass:SObound}(i) holds true then $\Ex[W_{k}\vert\hat{\scrF}_{k}]=0=\Ex[U_{k}\vert\scrF_{k}]=0$. Hence, under conditional unbiasedness, the processes $\{(U_{k},\scrF_{k});k\in\N\}$ and $\{(W_{k},\hat{\scrF_{k}});k\in\N\}$ are martingale difference sequences, where the filtrations are defined as $\scrF_{0}\eqdef\hat{\scrF}_{0}\eqdef\scrF_{1}\eqdef \sigma(X_{0},X_{1})$, and iteratively, for $k\geq 1$, 
\begin{align*}
\hat{\scrF}_{k}\eqdef\sigma(X_{0},X_{1},\xi_{1},\eta_{1},\ldots,\eta_{k-1},\xi_{k}),\; \scrF_{k+1}\eqdef \sigma(X_{0},X_{1},\xi_{1},\eta_{1},\ldots,\xi_{k},\eta_{k}).
\end{align*}
Observe that $\scrF_{k}\subseteq\hat{\scrF}_{k}\subseteq\scrF_{k+1}$ for all $k\geq 1$. The uniform variance bound, Assumption \ref{ass:SObound}(ii), ensures that the processes $\{(U_{k},\scrF_{k});k\in\N\}, \{(W_{k},\hat{\scrF_{k}});k\in\N\}$ have finite second moment. 

\begin{remark}
For deriving the stochastic estimates in the analysis to come, it is important to emphasize that $X_{k}$ is $\scrF_{k}$-measurable for all $k\geq 0$, and $Y_{k}$ is $\hat{\scrF}_{k}$-measurable. 
\end{remark}
The mini-batch sampling technology implies an online variance reduction effect, summarized in the next Lemma, whose simple proof we omit.
%%%%%%%%%%%%%%%%%%%%%%%%%%%%%%%%%
\begin{lemma}[Variance of the \ac{SO}]
 Suppose Assumption \ref{ass:SObound} holds. Then for $k \geq 1$,
\begin{align}\label{e:sigma}
    \Ex[\norm{W_{k}}^{2}\vert\scrF_{k}]\leq \frac{\sigma^{2}}{m_{k}}\text{ and }\Ex[\norm{U_{k}}^{2}\vert\scrF_{k}]\leq \frac{\sigma^{2}}{m_{k}}, \qquad \Pr-\text{a.s.} 
\end{align}
\end{lemma}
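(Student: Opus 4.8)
The plan is to establish each bound by the standard mini-batch averaging computation for independent, conditionally mean-zero summands; the only genuine subtlety is that $U_{k}$ and $W_{k}$ are measurable with respect to \emph{different} levels of the filtration, so the two cases are conditioned differently.

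I would first dispose of $U_{k}$. Since $Z_{k}=X_{k}+\alpha_{k}(X_{k}-X_{k-1})$ is $\scrF_{k}$-measurable and the batch $\{\xi_{k}^{(t)}\}_{t=1}^{m_{k}}$ is drawn iid from $\measP$ independently of $\scrF_{k}$, conditioning on $\scrF_{k}$ freezes the evaluation point $Z_{k}$ and leaves the summands $\eps(Z_{k},\xi_{k}^{(t)})$ conditionally independent. Expanding the squared norm yields
\[
\Ex[\norm{U_{k}}^{2}\vert\scrF_{k}]=\frac{1}{m_{k}^{2}}\sum_{s,t=1}^{m_{k}}\Ex\bigl[\inner{\eps(Z_{k},\xi_{k}^{(s)}),\eps(Z_{k},\xi_{k}^{(t)})}\,\big\vert\,\scrF_{k}\bigr].
\]
For $s\neq t$, conditional independence of $\xi_{k}^{(s)}$ and $\xi_{k}^{(t)}$ together with Assumption~\ref{ass:SObound}(i) applied at the frozen point $Z_{k}$ makes the cross term factor into a product of conditional means, both zero, so it vanishes. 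Each of the $m_{k}$ diagonal terms is bounded by $\sigma^{2}$ via Assumption~\ref{ass:SObound}(ii), giving $\Ex[\norm{U_{k}}^{2}\vert\scrF_{k}]\leq m_{k}\sigma^{2}/m_{k}^{2}=\sigma^{2}/m_{k}$.

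The bound on $W_{k}$ needs the same arithmetic but at a different conditioning level, because $Y_{k}=J_{\lambda_{k}T}(Z_{k}-\lambda_{k}A_{k}(Z_{k}))$ depends on the first batch $\xi_{k}$ and is therefore only $\hat{\scrF}_{k}$-measurable, not $\scrF_{k}$-measurable. I would accordingly condition first on $\hat{\scrF}_{k}$: the second batch $\{\eta_{k}^{(t)}\}$ is drawn iid independently of $\hat{\scrF}_{k}$, so repeating the expansion above with $Z_{k}$ replaced by the $\hat{\scrF}_{k}$-measurable point $Y_{k}$ gives $\Ex[\norm{W_{k}}^{2}\vert\hat{\scrF}_{k}]\leq\sigma^{2}/m_{k}$. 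To descend to $\scrF_{k}$, I would use the inclusion $\scrF_{k}\subseteq\hat{\scrF}_{k}$, the tower property, and the fact that $m_{k}$ is a deterministic input sequence:
\[
\Ex[\norm{W_{k}}^{2}\vert\scrF_{k}]=\Ex\bigl[\Ex[\norm{W_{k}}^{2}\vert\hat{\scrF}_{k}]\,\big\vert\,\scrF_{k}\bigr]\leq\Ex\Bigl[\tfrac{\sigma^{2}}{m_{k}}\,\Big\vert\,\scrF_{k}\Bigr]=\frac{\sigma^{2}}{m_{k}}.
\]

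There is no analytical obstacle here; the computation is elementary. The one point demanding care—and the reason the statement is not entirely automatic—is matching the measurability of each evaluation point to the correct $\sigma$-algebra. Writing the estimate for $W_{k}$ directly as a conditional expectation given $\scrF_{k}$ would be illegitimate, since $Y_{k}$ is not $\scrF_{k}$-measurable; the intermediate conditioning on $\hat{\scrF}_{k}$ followed by the tower property is exactly what repairs this, and it is the only step where the nested filtration structure $\scrF_{k}\subseteq\hat{\scrF}_{k}\subseteq\scrF_{k+1}$ is actually used.
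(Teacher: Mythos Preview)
Your proof is correct and is the standard mini-batch variance computation; the paper itself omits the proof, simply stating that it is simple. Your attention to the measurability issue for $W_{k}$—conditioning first on $\hat{\scrF}_{k}$ where $Y_{k}$ is measurable and then descending to $\scrF_{k}$ via the tower property—is exactly the right way to make the argument rigorous, and is more careful than what many authors would write out.
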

%%%%%%%%%%%%%%%%%%%%%%%%%%%%%%%%%%%%%%%%%%%%%%
We see that larger sampling rates lead to more precise point estimates of the single-valued operator. This comes at the cost of more evaluations of the stochastic operator. Hence, any mini-batch approach faces a trade-off between the \emph{oracle complexity} and the iteration complexity. We want to use mini-batch estimators to achieve an online variance reduction scheme, motivating the next assumption. 
%%%%%%%%%%%%%%%%%%%%%%%%%%%%%%%%%%%%%%%%%%%%%%%%%%%%%%
\begin{assumption}[Batch Size]
\label{ass:batch}
The batch size sequence $(m_{k})_{k\geq 1}$ is non-decreasing and satisfies $\sum_{k=1}^{\infty}\frac{1}{m_{k}}<\infty$. 
\end{assumption}

%%%%%%%%%%%%%%%%%%%%%%%%%%%%%%%%%%%%%%%%%%%%%
%%%%%%%%%% ANALYSIS %%%%%%%%%%%%%%%%%%%%%%%%%%%%%%%
\section{Analysis}
\label{sec:analysis}
%%% analysis
%--------------------------------------------------------------------
% !TEX root = ./RISFBF_MatProg.tex
%
This section is organized into three subsections. The first subsection derives asymptotic convergence guarantees, while the second and third subsections provides linear and sublinear rate statements in strongly monotone and monotone regimes, respectively.

\subsection{Asymptotic Convergence}

Given $\lambda>0$, we define the residual function for the monotone inclusion \eqref{eq:MI} as
\begin{equation}\label{eq:res}
\residual_{\lambda}(x)\eqdef \norm{x-J_{\lambda T}(x-\lambda V(x))}.
\end{equation}
Clearly, for every $\lambda>0$, $x\in\setS\iff \residual_{\lambda}(x)=0$.
Hence, $\residual_{\lambda}(\cdot)$ is a \emph{merit function} for the monotone
inclusion problem. To put this merit function into context, let us consider the special case where $T$ is the subdifferential of a lower semi-continuous convex function $g:\setH\to[0,\infty]$, i.e. $T=\partial g$. In this case, the resolvent $J_{\lambda T}$ reduces to the well-known \emph{proximal-operator}
$$
\prox_{\lambda g}(x)\eqdef \argmin_{u\in\setH}\{\lambda g(u)+\frac{1}{2}\norm{u-x}^{2}\}.
$$
In the potential case, where $V(x)=\nabla f(x)$ for some smooth convex function $f:\setH\to\R$, the residual function is thus seen to be a constant multiple of the norm of the so-called \emph{gradient mapping} $\norm{x-\prox_{\lambda g}(x-\lambda V(x))}$, which is a standard merit function in convex \cite{NesConvex} and stochastic \cite{DavDru19,Lan:2020tp} optimization. We use this function to quantify the per-iteration progress of \ac{RISFBF}. The main result of this subsection is the following. 

\begin{theorem}[{\bf Asymptotic Convergence}] \label{th:convergence}
Let $\bar{\alpha},\bar{\eps}\in(0,1)$ be fixed parameters. Suppose that Assumption \ref{ass:exists}-\ref{ass:batch} holds true. Let $(\alpha_{k})_{k\in\N}$ be a non-decreasing sequence such that $\lim_{k\to\infty}\alpha_{k}=\bar{\alpha}$. Let $(\lambda_{k})_{k\in\N}$ be a converging sequence in $(0,\frac{1}{4L})$ such that $\lim_{k\to\infty}\lambda_{k}=\lambda\in(0,\frac{1}{4L})$. If $\rho_{k}=\frac{5(1-\bar{\eps})(1-\bar{\alpha})^{2}}{4(2\alpha_{k}^{2}-\alpha_{k}+1)(1+L\lambda_{k})}$ for all $k\geq 1$, then
\begin{itemize}
\item[(i)] $\lim_{k\to\infty}\residual_{\lambda_{k}}(Z_{k})=0$ in $L^{2}(\Pr)$;
\item[(ii)]  the stochastic process $(X_{k})_{k\in\N}$ generated by algorithm \ac{RISFBF} weakly converges to a $\setS$-valued limiting random variable $X$;
\item[(iii)] $\sum_{k=1}^{\infty}\left[(1-\alpha_{k})\left(\frac{5(1-\alpha_{k}}{4\rho_{k}(1+L\lambda_{k})}-1\right)-2\alpha_{k}^{2}\right]\norm{X_{k}-X_{k-1}}^{2}<\infty\quad \Pr$-a.s.
\end{itemize}
\end{theorem}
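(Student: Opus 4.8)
The plan is to run a stochastic quasi-Fej\'er analysis anchored at an arbitrary $x^\ast\in\setS$, reduce it to a Robbins--Siegmund supermartingale recursion, and then close with a stochastic Opial lemma. Fix $x^\ast\in\setS$, so $-V(x^\ast)\in T(x^\ast)$. First I would derive the one-step Tseng inequality for the noisy forward-backward-forward step. Writing $g_k\eqdef\lambda_k^{-1}(Z_k-Y_k)-A_k(Z_k)\in T(Y_k)$ from the resolvent identity defining $Y_k$ in \eqref{eq:Y}, monotonicity of $T$ applied to the pairs $(Y_k,g_k)$ and $(x^\ast,-V(x^\ast))$, together with monotonicity and $L$-Lipschitz continuity of $V$, yields the deterministic estimate $\norm{\tilde Y_k-x^\ast}^2\le\norm{Z_k-x^\ast}^2-(1-\lambda_k^2L^2)\norm{Z_k-Y_k}^2$ for the un-relaxed iterate $\tilde Y_k\eqdef Y_k+\lambda_k(A_k(Z_k)-B_k(Y_k))$, modulo cross terms in the oracle errors $U_k,W_k$. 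For $\lambda_k<\tfrac1{4L}$ this coefficient is bounded below by a positive constant. Using the relaxation identity $\norm{X_{k+1}-x^\ast}^2=(1-\rho_k)\norm{Z_k-x^\ast}^2+\rho_k\norm{\tilde Y_k-x^\ast}^2-\rho_k(1-\rho_k)\norm{\tilde Y_k-Z_k}^2$ then produces a recursion in $\norm{Z_k-x^\ast}^2$ carrying a strictly negative coefficient on $\norm{Z_k-Y_k}^2$.

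Next I would take conditional expectations. Because the two mini-batches are drawn along the nested filtration $\scrF_k\subseteq\hat\scrF_k\subseteq\scrF_{k+1}$, I condition first on $\hat\scrF_k$ to annihilate $W_k$ and then on $\scrF_k$ to annihilate $U_k$, invoking Assumption \ref{ass:SObound}(i); the surviving second moments are controlled through the variance bound \eqref{e:sigma}, namely $\Ex[\norm{U_k}^2\mid\scrF_k],\Ex[\norm{W_k}^2\mid\scrF_k]\le\sigma^2/m_k$. I would also convert the noisy descent into a true-residual descent: since $J_{\lambda_kT}$ is nonexpansive, $\norm{Y_k-\bar Y_k}\le\lambda_k\norm{U_k}$ where $\bar Y_k\eqdef J_{\lambda_kT}(Z_k-\lambda_kV(Z_k))$ and $\residual_{\lambda_k}(Z_k)=\norm{Z_k-\bar Y_k}$, so that $\norm{Z_k-Y_k}^2\ge\tfrac12\residual_{\lambda_k}(Z_k)^2-\lambda_k^2\norm{U_k}^2$. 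Finally I would expand the inertial term via $2\inner{X_k-x^\ast,X_k-X_{k-1}}=\norm{X_k-x^\ast}^2-\norm{X_{k-1}-x^\ast}^2+\norm{X_k-X_{k-1}}^2$ and substitute $X_{k+1}-Z_k=\rho_k(\tilde Y_k-Z_k)=(X_{k+1}-X_k)-\alpha_k(X_k-X_{k-1})$. Collecting everything, the prescribed $\rho_k=\tfrac{5(1-\bar\eps)(1-\bar\alpha)^2}{4(2\alpha_k^2-\alpha_k+1)(1+L\lambda_k)}$ is exactly what matches the coefficient of $\norm{X_k-X_{k-1}}^2$ to the bracket appearing in part (iii) and keeps it nonnegative.

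This should deliver a recursion of the form $\Ex[\Gamma_{k+1}\mid\scrF_k]\le\Gamma_k-D_k+C\sigma^2/m_k$ for a Lyapunov function $\Gamma_k$ built from $\norm{X_k-x^\ast}^2$ and a velocity term, where $D_k\ge0$ aggregates the residual and velocity descents and $\sum_k m_k^{-1}<\infty$ by Assumption \ref{ass:batch}. The Robbins--Siegmund theorem then yields a.s.\ convergence of $\Gamma_k$ (hence of $\norm{X_k-x^\ast}$) and a.s.\ summability $\sum_k D_k<\infty$, which gives (iii) and the a.s.\ statement $\residual_{\lambda_k}(Z_k)\to0$; taking full expectations, summing, and using $\sum_k m_k^{-1}<\infty$ upgrades the latter to the $L^2(\Pr)$ claim (i). For the weak convergence (ii) I would invoke the stochastic Opial lemma: the Fej\'er part provides a.s.\ convergence of $\norm{X_k-x^\ast}$ for every $x^\ast$ in a countable dense subset of $\setS$ (separability of $\setH$ handles the null-set bookkeeping), while (iii) forces $Z_k-X_k=\alpha_k(X_k-X_{k-1})\to0$, so $X_k$ and $Z_k$ share weak cluster points; combining $\residual_{\lambda_k}(Z_k)\to0$ with the demiclosedness of the maximally monotone $F=T+V$ shows every weak cluster point lies in $\setS$ a.s.

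The main obstacle I anticipate is the bookkeeping in the energy inequality: tracking the cross terms coupling $U_k,W_k$ to the iterates through the two-stage conditioning so that each stochastic contribution is either killed in expectation or absorbed into the summable $\sigma^2/m_k$ budget, while simultaneously arranging the deterministic constants so that the velocity coefficient is precisely the nonnegative bracket of (iii) under $\lambda_k<\tfrac1{4L}$ and the given $\rho_k$. A secondary subtlety is establishing the finiteness of second moments required to justify the full-expectation version underlying the $L^2$ statement in (i), which I would obtain by induction using the variance bound.
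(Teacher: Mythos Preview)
Your proposal is correct and follows essentially the same route as the paper: derive the noisy Tseng one-step inequality, pass through the relaxation identity \eqref{bd-eq1}, expand the inertia via \eqref{eq:Z1}--\eqref{eq:Xk1}, build a Lyapunov sequence $Q_k(p)=\phi_k(p)-\alpha_k\phi_{k-1}(p)+(1-\alpha_k)\bigl(\tfrac{5}{4\rho_k(1+L\lambda_k)}-1\bigr)\Delta_k$, apply Robbins--Siegmund, and close with the Combettes--Pesquet stochastic Opial argument and demiclosedness of $F$. The one step you underestimate is verifying $Q_k(p)\ge 0$ (needed for Robbins--Siegmund), which in the paper takes a half-page Young/triangle-inequality computation exploiting the specific form of $\rho_k$, and the subsequent passage from convergence of $Q_k(p)$ to convergence of $\norm{X_k-x^\ast}$, which requires a short subsequence contradiction argument rather than being immediate.
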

We prove this Theorem via a sequence of technical Lemmas. Our proof strategy for Theorem \ref{th:convergence} follows \cite{BotMerStaVu21}, where a stochastic quasi-Fej\'{e}r principle is established, including the residual function. We start with a basic relation.
%%%%%%%%%%%%%%%%%%%%%%%%%%%%
\begin{lemma}\label{lem:YZG}
For all $k\geq 1$, we have 
\begin{equation}\label{eq:YZG}
    -\norm{Z_{k}-Y_{k}}^{2}\leq \lambda^{2}_{k}\norm{U_{k}}^{2}-\frac{1}{2}\residual^{2}_{\lambda_{k}}(Z_{k}).
\end{equation}
\end{lemma}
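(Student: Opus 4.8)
The plan is to compare the stochastic forward-backward point $Y_{k}$ with the \emph{noiseless} point one would obtain by feeding the exact operator value $V(Z_{k})$ into the resolvent instead of the mini-batch estimator $A_{k}(Z_{k})$. Concretely, I introduce the auxiliary iterate
\[
\hat{Y}_{k}\eqdef J_{\lambda_{k}T}\bigl(Z_{k}-\lambda_{k}V(Z_{k})\bigr),
\]
so that, by the very definition of the residual in \eqref{eq:res}, we have the clean identity $\residual_{\lambda_{k}}(Z_{k})=\norm{Z_{k}-\hat{Y}_{k}}$. The whole point is that $Y_{k}$ and $\hat{Y}_{k}$ are resolvents evaluated at two nearby arguments, and the gap between those arguments is controlled by the oracle error $U_{k}$.

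First I would quantify the distance between $Y_{k}$ and $\hat{Y}_{k}$. Recalling the decomposition $A_{k}(Z_{k})=V(Z_{k})+U_{k}$ established in Section~\ref{sec:SO}, the two points read $Y_{k}=J_{\lambda_{k}T}(Z_{k}-\lambda_{k}V(Z_{k})-\lambda_{k}U_{k})$ and $\hat{Y}_{k}=J_{\lambda_{k}T}(Z_{k}-\lambda_{k}V(Z_{k}))$; they differ only by the perturbation $\lambda_{k}U_{k}$ in the argument. Since $T$ is maximally monotone (Assumption~\ref{ass:MM}), the resolvent $J_{\lambda_{k}T}$ is firmly nonexpansive, hence $1$-Lipschitz, which immediately gives $\norm{Y_{k}-\hat{Y}_{k}}\leq\lambda_{k}\norm{U_{k}}$.

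The final step is a purely algebraic combination. Writing $Z_{k}-\hat{Y}_{k}=(Z_{k}-Y_{k})+(Y_{k}-\hat{Y}_{k})$ and invoking the elementary bound $\norm{a+b}^{2}\leq 2\norm{a}^{2}+2\norm{b}^{2}$, I obtain
\[
\residual_{\lambda_{k}}^{2}(Z_{k})=\norm{Z_{k}-\hat{Y}_{k}}^{2}\leq 2\norm{Z_{k}-Y_{k}}^{2}+2\norm{Y_{k}-\hat{Y}_{k}}^{2}\leq 2\norm{Z_{k}-Y_{k}}^{2}+2\lambda_{k}^{2}\norm{U_{k}}^{2}.
\]
Dividing by two and rearranging yields precisely \eqref{eq:YZG}. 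There is no genuine obstacle in this argument: the single conceptual ingredient is the introduction of the noiseless surrogate $\hat{Y}_{k}$, which decouples the residual (a deterministic quantity built from $V$) from the realized iterate $Y_{k}$ (built from the noisy estimator $A_{k}$), after which nonexpansiveness of the resolvent and the standard $2a^{2}+2b^{2}$ inequality close the estimate. The only mild point to keep track of is that the residual function is defined through the exact $V$, so one must resist the temptation to write $\residual_{\lambda_{k}}(Z_{k})=\norm{Z_{k}-Y_{k}}$; the discrepancy between $Y_{k}$ and $\hat{Y}_{k}$ is exactly the term $\lambda_{k}^{2}\norm{U_{k}}^{2}$ appearing on the right-hand side.
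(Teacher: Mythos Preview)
Your proof is correct and is essentially the same as the paper's: both introduce the noiseless resolvent point (which the paper leaves unnamed while you call it $\hat{Y}_{k}$), decompose $Z_{k}-\hat{Y}_{k}$ through $Y_{k}$, apply the inequality $\norm{a+b}^{2}\leq 2\norm{a}^{2}+2\norm{b}^{2}$, and invoke nonexpansiveness of the resolvent to bound $\norm{Y_{k}-\hat{Y}_{k}}$ by $\lambda_{k}\norm{U_{k}}$. The only difference is cosmetic---you name the auxiliary point explicitly, which arguably makes the argument a bit more transparent.
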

%%%%%%%%%%%%%%%%%%%%%%%%%%%%%%%%%%%%%%%
\begin{proof}
By definition, 
\begin{align*}
\frac{1}{2}\residual_{\lambda_{k}}^{2}(Z_{k})&=\frac{1}{2}\norm{Z_{k}-J_{\lambda_{k}T}(Z_{k}-\lambda_{k}V(Z_{k}))}^{2}
=\frac{1}{2}\norm{Z_{k}-Y_{k}+J_{\lambda_{k}T}(Z_{k}-\lambda_{k}A_{k}(Z_{k}))-J_{\lambda_{k}T}(Z_{k}-\lambda_{k}V(Z_{k}))}^{2}\\
&\leq \norm{Z_{k}-Y_{k}}^{2}+\norm{J_{\lambda_{k}T}(Z_{k}-\lambda_{k}A_{k}(Z_{k}))-J_{\lambda_{k}T}(Z_{k}-\lambda_{k}V(Z_{k}))}^{2}
\leq \norm{Z_{k}-Y_{k}}^{2}+\lambda^{2}_{k}\norm{U_{k}}^{2},
\end{align*}
where the last inequality uses the non-expansivity property of the resolvent operator. Rearranging terms gives the claimed result. 
\qed
\end{proof}
Next, for a given pair $(p,p^{\ast})\in\gr(F)$, we define the stochastic processes $(\Delta M_{k})_{k\in\N}, (\Delta N_{k}(p,p^{\ast}))_{k\in\N}$, and $(\ce_k)_{k\in\N}$ as
\begin{align}
    &\Delta M_{k}\eqdef \frac{5\rho_{k}\lambda_{k}^{2}}{2(1+L\lambda_{k})} \norm{\ce_{k}}^{2}+\frac{\rho_{k}\lambda^{2}_{k}}{2}\norm{U_{k}}^{2},\label{eq:M}\\
    &\Delta N_{k}(p,p^{\ast})\eqdef 2\rho_{k}\lambda_{k}\inner{W_{k}+p^{\ast},p-Y_{k}},\text{ and }
\label{eq:N}\\
 &\ce_{k}\eqdef W_{k}-U_{k}.
\label{eq:e}
\end{align}
Key to our analysis is the following energy bound on the evolution of the anchor sequence $\left(\norm{X_{k}-p}^{2}\right)_{k\in\N}$.
%%%%%%%%%%%%Fundamental Recursion%%%%%%%%%%%%%%%%%
\begin{lemma}[{\bf Fundamental Recursion}]
\label{lem:Recursion}
Let $(X_{k})_{k\in\N}$ be the stochastic process generated by \ac{RISFBF} with $\alpha_{k}\in(0,1)$, $0\leq \rho_{k}<\frac{5}{4(1+L\lambda_{k})}$, and $\lambda_{k}\in(0,1/4L)$. For all $k\geq 1$ and $(p,p^{\ast})\in\gr(F)$, we have 
\begin{align*}
\norm{X_{k+1}-p}^{2}&\leq (1+\alpha_{k})\norm{X_{k}-p}^{2}-\alpha_{k}\norm{X_{k-1}-p}^{2}-\frac{\rho_{k}}{4}\residual^{2}_{\lambda_{k}}(Z_{k})\\
&+\Delta M_{k}+\Delta N_{k}(p,p^{\ast})-2\rho_{k}\lambda_{k}\inner{V(Y_{k})-V(p),Y_{k}-p}\\
&+\alpha_{k}\norm{X_{k}-X_{k-1}}^{2}\left(2\alpha_{k}+\frac{5(1-\alpha_{k})}{4\rho_{k}(1+L\lambda_{k})}\right)\\
&-(1-\alpha_{k})\left(\frac{5}{4\rho_{k}(1+L\lambda_{k})}-1\right)\norm{X_{k+1}-X_{k}}^{2}.
\end{align*}
\end{lemma}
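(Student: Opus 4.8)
The plan is to begin from the resolvent step. Since $Y_{k}=J_{\lambda_{k}T}(Z_{k}-\lambda_{k}A_{k}(Z_{k}))$, the definition of the resolvent gives $\lambda_{k}^{-1}(Z_{k}-Y_{k})-A_{k}(Z_{k})\in T(Y_{k})$; writing $t_{k}\eqdef\lambda_{k}^{-1}(Z_{k}-Y_{k})-A_{k}(Z_{k})$ and adding $V(Y_{k})$ produces $\phi_{k}\eqdef t_{k}+V(Y_{k})\in F(Y_{k})$. With the forward–forward point $D_{k}\eqdef Y_{k}+\lambda_{k}(A_{k}(Z_{k})-B_{k}(Y_{k}))$, a direct computation using $A_{k}(Z_{k})=V(Z_{k})+U_{k}$ and $B_{k}(Y_{k})=V(Y_{k})+W_{k}$ shows $Z_{k}-D_{k}=\lambda_{k}(\phi_{k}+W_{k})$, so the update reads $X_{k+1}=Z_{k}-\rho_{k}\lambda_{k}(\phi_{k}+W_{k})$. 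Expanding the square yields $\norm{X_{k+1}-p}^{2}=\norm{Z_{k}-p}^{2}-2\rho_{k}\lambda_{k}\inner{\phi_{k}+W_{k},Z_{k}-p}+\rho_{k}^{2}\lambda_{k}^{2}\norm{\phi_{k}+W_{k}}^{2}$, which I take as the master identity.

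Next I would split the inner product along $Z_{k}-p=(Z_{k}-Y_{k})+(Y_{k}-p)$ and treat the two pieces differently. On $(Y_{k}-p)$ I exploit that any $p^{\ast}\in F(p)$ decomposes as $p^{\ast}=s^{\ast}+V(p)$ with $s^{\ast}\in T(p)$; monotonicity of $T$ applied to $(Y_{k},t_{k})$ and $(p,s^{\ast})$ gives $\inner{\phi_{k},Y_{k}-p}\geq\inner{p^{\ast},Y_{k}-p}+\inner{V(Y_{k})-V(p),Y_{k}-p}$. Multiplying by $-2\rho_{k}\lambda_{k}$ and reinstating the $W_{k}$ contribution produces exactly the term $\Delta N_{k}(p,p^{\ast})$ together with the explicit term $-2\rho_{k}\lambda_{k}\inner{V(Y_{k})-V(p),Y_{k}-p}$ appearing in the claim. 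In parallel, the inertial identity $Z_{k}=X_{k}+\alpha_{k}(X_{k}-X_{k-1})$ combined with the three-point rule gives $\norm{Z_{k}-p}^{2}=(1+\alpha_{k})\norm{X_{k}-p}^{2}-\alpha_{k}\norm{X_{k-1}-p}^{2}+\alpha_{k}(1+\alpha_{k})\norm{X_{k}-X_{k-1}}^{2}$, supplying the first two terms of the recursion.

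The remaining and most delicate work concerns the $(Z_{k}-Y_{k})$ part of the inner product together with $\rho_{k}^{2}\lambda_{k}^{2}\norm{\phi_{k}+W_{k}}^{2}=\rho_{k}^{2}\norm{Z_{k}-D_{k}}^{2}$. Here I would use $Z_{k}-D_{k}=(Z_{k}-Y_{k})-\lambda_{k}(V(Z_{k})-V(Y_{k}))+\lambda_{k}\ce_{k}$, bound $\norm{V(Z_{k})-V(Y_{k})}\leq L\norm{Z_{k}-Y_{k}}$ by Lipschitz continuity, and apply Young's inequalities to isolate a favourable negative multiple of $\norm{Z_{k}-Y_{k}}^{2}$, the noise term $\frac{5\rho_{k}\lambda_{k}^{2}}{2(1+L\lambda_{k})}\norm{\ce_{k}}^{2}$ of $\Delta M_{k}$, and the displacement terms; the restriction $\lambda_{k}<1/(4L)$ keeps $\lambda_{k}^{2}L^{2}$ small enough that the coefficient of $\norm{Z_{k}-Y_{k}}^{2}$ stays negative. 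I would then peel off a coefficient $\rho_{k}/2$ on $-\norm{Z_{k}-Y_{k}}^{2}$ and feed it into Lemma~\ref{lem:YZG}, which converts it into $-\frac{\rho_{k}}{4}\residual^{2}_{\lambda_{k}}(Z_{k})+\frac{\rho_{k}\lambda_{k}^{2}}{2}\norm{U_{k}}^{2}$, simultaneously producing the residual decrease and the second half of $\Delta M_{k}$. Finally, the displacement relation $X_{k+1}-X_{k}=\alpha_{k}(X_{k}-X_{k-1})-\rho_{k}(Z_{k}-D_{k})$ re-expresses $\rho_{k}^{2}\norm{Z_{k}-D_{k}}^{2}$ and the leftover $\norm{Z_{k}-Y_{k}}^{2}$ through $\norm{X_{k+1}-X_{k}}^{2}$ and $\norm{X_{k}-X_{k-1}}^{2}$, collecting the coefficient $-(1-\alpha_{k})\big(\frac{5}{4\rho_{k}(1+L\lambda_{k})}-1\big)$ on the former and completing the inertial coefficient $\alpha_{k}\big(2\alpha_{k}+\frac{5(1-\alpha_{k})}{4\rho_{k}(1+L\lambda_{k})}\big)$ on the latter.

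The conceptual steps—the resolvent inclusion, the $T$/$V$ split of monotonicity, and the residual conversion via Lemma~\ref{lem:YZG}—are routine. I expect the main obstacle to be the bookkeeping of the Young's inequalities in the third step: the splitting weights must be chosen so that, after combining the Lipschitz factor $1+L\lambda_{k}$, the relaxation $\rho_{k}$, and the inertial cross term $\inner{X_{k}-X_{k-1},Z_{k}-D_{k}}$, the coefficients collapse to exactly $\frac{5}{4\rho_{k}(1+L\lambda_{k})}$ and $\frac{5\rho_{k}\lambda_{k}^{2}}{2(1+L\lambda_{k})}$. Making the factor $5/4$ and the single (rather than squared) power of $1+L\lambda_{k}$ line up is where essentially all the effort lies; the rest is sign-driven algebra under the standing hypotheses $0\leq\rho_{k}<\frac{5}{4(1+L\lambda_{k})}$ and $\lambda_{k}\in(0,1/4L)$.
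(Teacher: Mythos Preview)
Your plan is correct and tracks the paper's argument closely: both routes use the resolvent inclusion and monotonicity of $T$ on the $(Y_k-p)$ piece to extract $\Delta N_k(p,p^\ast)$ and the $-2\rho_k\lambda_k\inner{V(Y_k)-V(p),Y_k-p}$ term, then invoke Lemma~\ref{lem:YZG} on a reserved $-\tfrac{\rho_k}{2}\norm{Z_k-Y_k}^2$ to produce the residual, and finally pass to $\norm{X_{k+1}-X_k}^2$ and $\norm{X_k-X_{k-1}}^2$ via the inertial identities.

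The only organizational difference is in the middle step. The paper does not work directly with your master expansion; instead it first derives a bound on $\norm{R_k-p}^2$ (your $D_k=R_k$), plugs it into the three-point identity $\norm{X_{k+1}-p}^2=(1-\rho_k)\norm{Z_k-p}^2+\rho_k\norm{R_k-p}^2-\tfrac{1-\rho_k}{\rho_k}\norm{X_{k+1}-Z_k}^2$, and then converts the surviving $-\rho_k(1/2-2L^2\lambda_k^2)\norm{Z_k-Y_k}^2$ into a multiple of $-\norm{X_{k+1}-Z_k}^2$ by squaring $\tfrac{1}{\rho_k}\norm{X_{k+1}-Z_k}\le(1+L\lambda_k)\norm{Y_k-Z_k}+\lambda_k\norm{\ce_k}$ and multiplying by $\tfrac{\rho_k(1/2-2L\lambda_k)}{1+L\lambda_k}$, together with the factorization $(1/2-2L\lambda_k)(1+L\lambda_k)\le 1/2-2L^2\lambda_k^2$. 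This is precisely the maneuver that manufactures the coefficient $\tfrac{5/2-2\rho_k(1+L\lambda_k)}{2\rho_k(1+L\lambda_k)}$ on $\norm{X_{k+1}-Z_k}^2$ and the $\tfrac{5\rho_k\lambda_k^2}{2(1+L\lambda_k)}$ on $\norm{\ce_k}^2$; the inertial lower bound $\norm{X_{k+1}-Z_k}^2\ge(1-\alpha_k)\norm{X_{k+1}-X_k}^2+(\alpha_k^2-\alpha_k)\norm{X_k-X_{k-1}}^2$ then finishes. Your direct expansion is algebraically equivalent, but you will find that the paper's conversion step (rather than Young's inequalities applied ad hoc) is the cleanest way to make the $5/4$ and the single power of $1+L\lambda_k$ fall out.
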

%%%%%%%%%%%%%
\begin{proof}
To simplify the notation, let us call $A_{k}\equiv A_{k}(Z_{k})$ and $B_{k}\equiv B_{k}(Y_{k})$. We also introduce the intermediate update $R_{k}\eqdef Y_{k}+\lambda_{k}(A_{k}-B_{k})$. For all $k\geq 0$, it holds true that 
\begin{align*}
\norm{Z_{k}-p}^{2}&=\norm{Z_{k}-Y_{k}+Y_{k}-R_{k}+R_{k}-p}^{2}\\
&=\norm{Z_{k}-Y_{k}}^{2}+\norm{Y_{k}-R_{k}}^{2}+\norm{R_{k}-p}^{2}+2\inner{Z_{k}-Y_{k},Y_{k}-p}+2\inner{Y_{k}-R_{k},R_{k}-p}\\
&=\norm{Z_{k}-Y_{k}}^{2}+\norm{Y_{k}-R_{k}}^{2}+\norm{R_{k}-p}^{2}+2\inner{Z_{k}-Y_{k},Y_{k}-p}+2\inner{Y_{k}-R_{k},R_{k}-p}\\
&=\norm{Z_{k}-Y_{k}}^{2}+\norm{Y_{k}-R_{k}}^{2}+\norm{R_{k}-p}^{2}+2\inner{Z_{k}-Y_{k},Y_{k}-p}+2\inner{Y_{k}-R_{k},Y_{k}-p}\\
&+2\inner{Y_{k}-R_{k},R_k-Y_{k}}\\
&=\norm{Z_{k}-Y_{k}}^{2}+\norm{Y_{k}-R_{k}}^{2}+\norm{R_{k}-p}^{2}+2\inner{Z_{k}-R_{k},Y_{k}-p}+2\inner{Y_{k}-R_{k},R_k-Y_{k}}\\
&=\norm{Z_{k}-Y_{k}}^{2}-\norm{Y_{k}-R_{k}}^{2}+\norm{R_{k}-p}^{2}+2\inner{Z_{k}-R_{k},Y_{k}-p}.
\end{align*}
Since 
\begin{align*}
\norm{Y_{k}-R_{k}}^{2}&=\lambda^{2}_{k}\norm{B_{k}(Y_{k})-Y_{k}(Z_{k})}^{2}\\
&\leq \lambda_{k}^{2}\norm{V(Y_{k})-V(Z_{k})+W_{k+1}-U_{k+1}}^{2}\\
&\leq \lambda_{k}^{2}\norm{V(Y_{k})-V(Z_{k})}^{2}+\lambda_{k}^{2}\norm{W_{k}-U_{k}}^{2}+2\lambda_{k}^{2}\inner{V(Y_{k})-V(Z_{k}),W_{k}-U_{k}}\\
&\leq L^{2}\lambda_{k}^{2}\norm{Y_{k}-Z_{k}}^{2}+\lambda_{k}^{2}\norm{W_{k}-U_{k}}^{2}+2\lambda_{k}^{2}\inner{V(Y_{k})-V(Z_{k}),W_{k}-U_{k}}\\
&\leq 2L^{2}\lambda_{k}^{2}\norm{Y_{k}-Z_{k}}^{2}+2\lambda_{k}^{2}\norm{W_{k}-U_{k}}^{2}.
\end{align*}
Introducing the process $(\ce_{k})_{k\in\N}$ from eq. \eqref{eq:e}, the aforementioned set of inequalities reduces to
\[
\norm{Y_{k}-R_{k}}^{2}\leq 2L^{2}\lambda_{k}^{2}\norm{Y_{k}-Z_{k}}^{2}+2\lambda_{k}^{2}\norm{\ce_{k}}^{2}.
\]
Hence, 
\begin{align*}
\norm{Z_{k}-p}^{2}\geq (1-2L^{2}\lambda_{k}^{2})\norm{Z_{k}-Y_{k}}^{2}-2\lambda_{k}^{2}\norm{\ce_{k}}^{2}+\norm{R_{k}-p}^{2}+2\inner{Z_{k}-R_{k},Y_{k}-p}.
\end{align*}
But $Y_{k}+\lambda_{k}T(Y_{k})\ni Z_{k}-\lambda_{k}A_{k}$, implying that 
\[
\frac{1}{\lambda_{k}}(Z_{k}-Y_{k}-\lambda_{k}A_{k})\in T(Y_{k}).
\]
Pick $(p,p^{\ast})\in\gr(F)$, so that $p^{\ast}-V(p)\in T(p)$. Then, the monotonicity of $T$ yields the estimate 
\[
\inner{\frac{1}{\lambda_{k}}(Z_{k}-Y_{k}-\lambda_{k}A_{k})-p^{\ast}+V(p),Y_{k}-p}\geq 0.
\]
This is equivalent to 
\begin{align} \notag 
& \inner{\frac{1}{\lambda_{k}}(Z_{k}-R_{k}-\lambda_{k}B_{k})-p^{\ast}+V(p),Y_{k}-p}\geq 0, \\
\label{eq:MT}
\mbox{ or } & \inner{Z_{k}-R_{k},Y_{k}-p}\geq \lambda_{k}\inner{W_{k}+p^{\ast},Y_{k}-p}+\lambda_{k}\inner{V(Y_{k})-V(p),Y_{k}-p}.
\end{align}
This implies that
\begin{align*}
\inner{Z_{k}-R_{k},Y_{k}-x^{\ast}}\geq\lambda_{k}\inner{W_{k},Y_{k}-x^{\ast}}.
\end{align*}
Hence, we obtain the following, 
\begin{align*}
    \norm{Z_{k}-p}^{2}& \geq (1-2L^{2}\lambda^{2}_{k})\norm{Y_{k}-Z_{k}}^{2}+\norm{R_{k}-p}^{2}-2\lambda^{2}_{k}\norm{\ce_{k}}^{2}\\
&+2\lambda_{k}\inner{W_{k}+p^{\ast},Y_{k}-p}+2\lambda_{k}\inner{V(Y_{k})-V(p),Y_{k}-p}.
\end{align*}
Rearranging terms, we arrive at the following bound on $\norm{R_{k}-p}^{2}$:
\begin{align}\label{eq:Rk}
\norm{R_{k}-p}^{2}\leq& \norm{Z_{k}-p}^{2}-(1-2L^{2}\lambda_{k}^{2})\norm{Y_{k}-Z_{k}}^{2}+2\lambda^{2}_{k}\norm{\ce_{k}}^{2}+2\lambda_{k}\inner{W_{k}+p^{\ast},p-Y_{k}}\\
&+2\lambda_{k}\inner{V(Y_{k})-V(p),p-Y_{k}}\nonumber
\end{align}
Next, we observe that $\norm{X_{k+1}-p}^{2}$ may be bounded as follows.
\begin{align}
\norm{X_{k+1}-p}^{2}&=\norm{(1-\rho_{k})Z_{k}+\rho_{k}R_{k}-p}^{2}\notag\\
&=\norm{(1-\rho_{k})(Z_{k}-p)-\rho_{k}(R_{k}-p)}^{2}\notag\\
&=(1-\rho_{k})^2\norm{Z_{k}-p}^{2}+\rho_{k}^2\norm{R_{k}-p}^{2}- 2\rho_k(1-\rho_k) \inner{Z_k-p,R_k-p} \notag\\
&=(1-\rho_{k})\norm{Z_{k}-p}^{2}-\rho_{k}(1-\rho_{k})\norm{Z_{k}-p}^{2}+\rho_{k}\norm{R_{k}-p}^{2}-\rho_{k}(1-\rho_{k})\norm{R_{k}-p}^2 \notag\\
&  + 2\rho_k(1-\rho_k) \inner{Z_k-p,R_k-p} \notag\\
&=(1-\rho_{k})\norm{Z_{k}-p}^{2}+\rho_{k}\norm{R_{k}-p}^{2}-\rho_{k}(1-\rho_{k})\norm{R_{k}-Z_{k}}^{2}\notag \\
&=(1-\rho_{k})\norm{Z_{k}-p}^{2}+\rho_{k}\norm{R_{k}-p}^{2}-\frac{1-\rho_{k}}{\rho_{k}}\norm{X_{k+1}-Z_{k}}^{2}. \label{bd-eq1}
\end{align}
We may then derive a bound on the expression in~\eqref{bd-eq1}, 
\begin{align}
\notag& \quad (1-\rho_{k})\norm{Z_{k}-p}^{2}+\rho_{k}\norm{R_{k}-p}^{2}-\frac{1-\rho_{k}}{\rho_{k}}\norm{X_{k+1}-Z_{k}}^{2}\\
\notag&\leq \norm{Z_{k}-p}^{2}-\frac{1-\rho_{k}}{\rho_{k}}\norm{X_{k+1}-Z_{k}}^{2}-\rho_{k}(1-2L^{2}\lambda^{2}_{k})\norm{Z_{k}-Y_{k}}^{2}\\
&+2\lambda^{2}_{k}\rho_{k}\norm{\ce_{k}}^{2}-2\rho_{k}\lambda_{k}\inner{W_{k}+p^{\ast},Y_{k}-p}+2\rho_{k}\lambda_{k}\inner{V(Y_{k})-V(p),p-Y_{k}}  \label{bd-eq1sc} \\
\notag&=\norm{Z_{k}-p}^{2}-\frac{1-\rho_{k}}{\rho_{k}}\norm{X_{k+1}-Z_{k}}^{2}-\rho_{k}(1/2-2L^{2}\lambda^{2}_{k})\norm{Z_{k}-Y_{k}}^{2}\\
\notag&+2\lambda^{2}_{k}\rho_{k}\norm{\ce_{k}}^{2}-2\rho_{k}\lambda_{k}\inner{W_{k}+p^{\ast},Y_{k}-p}\\
& -\frac{\rho_{k}}{2}\norm{Y_{k}-Z_{k}}^{2}+2\rho_{k}\lambda_{k}\inner{V(Y_{k})-V(p),p-Y_{k}}. \label{l-lambda}
\end{align}
By invoking \eqref{eq:YZG}, we arrive at the estimate  
\begin{align*}
\norm{X_{k+1}-p}^{2}&\leq \norm{Z_{k}-p}^{2}-\frac{1-\rho_{k}}{\rho_{k}}\norm{X_{k+1}-Z_{k}}^{2}+2\lambda^{2}\rho_{k}\norm{\ce_{k}}^{2}-2\rho_{k}\lambda_{k}\inner{W_{k}+p^{\ast},Y_{k}-p} \nonumber\\
&-\rho_{k}(1/2-2L^{2}\lambda^{2}_{k})\norm{Y_{k}-Z_{k}}^{2} -\frac{\rho_{k}}{4}\residual^{2}_{\lambda_{k}}(Z_{k})\\
    & +\frac{\rho_{k}\lambda^{2}_{k}}{2}\norm{U_{k}}^{2}+2\rho_{k}\lambda_{k}\inner{V(Y_{k})-V(p),p-Y_{k}}.
\end{align*}
Furthermore,
\begin{align*}
\frac{1}{\rho_{k}}\norm{X_{k+1}-Z_{k}} &=\norm{R_{k}-Z_{k}}\leq\norm{R_{k}-Y_{k}}+\norm{Y_{k}-Z_{k}}\\
&\leq \lambda_{k}\norm{B_{k}-A_{k}}+\norm{Y_{k}-Z_{k}}\\
&\leq(1+ L\lambda_{k})\norm{Y_{k}-Z_{k}}+\lambda_{k}\norm{\ce_{k}},
\end{align*}
which implies that
\begin{align}
\frac{1}{2\rho^{2}_{k}}\norm{X_{k+1}-Z_{k}}^{2}\leq (1+L \lambda_{k})^{2}\norm{Y_{k}-Z_{k}}^{2}+\lambda^{2}_{k}\norm{\ce_{k}}^{2}. \label{bd-eq2sc}
\end{align}
Multiplying both sides by $\frac{\rho_{k}(1/2-2L\lambda_{k})}{1+L\lambda_{k}}$, a positive scalar since $\lambda_k \in (0,\tfrac{1}{4L})$, we obtain 
\begin{align}
\notag\frac{1/2-2L\lambda_{k}}{2\rho_{k}(1+L\lambda_{k})}\norm{X_{k+1}-Z_{k}}^{2}&\leq\rho_{k}(1/2-2L\lambda_{k})(1+L\lambda_{k})\norm{Y_{k}-Z_{k}}^{2}\\
&+\frac{\lambda^{2}_{k}\rho_{k}(1/2-2L\lambda_{k})}{1+L\lambda_{k}}\norm{\ce_{k}}^{2}. \label{l-lambda-2}
\end{align}
Rearranging terms, and noting that $(1/2-2L\lambda_{k})(1+L\lambda_{k})\leq 1/2-2L^{2}\lambda^{2}_{k}$, the above estimate becomes
\begin{align} \label{bd-eq2} \notag
-\rho_{k}(1/2-2L^{2}\lambda^{2}_{k})\norm{Y_{k}-Z_{k}}^{2}&\leq -\frac{1/2-2L\lambda_{k}}{2\rho_{k}(1+L\lambda_{k})}\norm{X_{k+1}-Z_{k}}^{2}\\
	& +\frac{\rho_{k}\lambda^{2}_{k}(1/2-2L\lambda_{k})}{1+L\lambda_{k}}\norm{\ce_{k}}^{2}.
\end{align}
Substituting this bound into the first majorization of the anchor process $\norm{X_{k+1}-p}^{2}$, we see
\begin{align*}
\norm{X_{k+1}-p}^{2}&\leq \norm{Z_{k}-p}^{2}-\left(\frac{1-\rho_{k}}{\rho_{k}}+\frac{1/2-2L\lambda_{k}}{2\rho_{k}(1+L\lambda_{k})}\right)\norm{X_{k+1}-Z_{k}}^{2}\\
&+\rho_{k}\lambda_{k}^{2}\norm{\ce_{k}}^{2}\left(2+\frac{1/2-2L\lambda_{k}}{1+L\lambda_{k}}\right)+2\rho_{k}\lambda_{k}\inner{V(Y_{k})-V(p),p-Y_{k}}\\
&-2\rho_{k}\lambda_{k}\inner{W_{k}+p^{\ast},Y_{k}-p}-\frac{\rho_{k}}{4}\residual^{2}_{\lambda_{k}}(Z_{k})+\frac{\rho_{k}\lambda^{2}_{k}}{2}\norm{U_{k}}^{2}\\
&=\norm{Z_{k}-p}^{2}-\frac{\rho_{k}}{4}\residual^{2}_{\lambda_{k}}(Z_{k})+\frac{\rho_{k}\lambda^{2}_{k}}{2}\norm{U_{k}}^{2}-2\rho_{k}\lambda_{k}\inner{W_{k}+p^{\ast},Y_{k}-p}\\
&-\frac{5/2-2\rho_{k}(1+L\lambda_{k})}{2\rho_{k}(1+L\lambda_{k})}\norm{X_{k+1}-Z_{k}}^{2}\\
& +\frac{5\rho_{k}\lambda_{k}^{2}}{2(1+L\lambda_{k})} \norm{\ce_{k}}^{2}+2\rho_{k}\lambda_{k}\inner{V(Y_{k})-V(p),p-Y_{k}}.
\end{align*}
Observe that 
\begin{align}
\norm{X_{k+1}-Z_{k}}^{2}&=\norm{(X_{k+1}-X_{k})-\alpha_{k}(X_{k}-X_{k-1})}^{2} \nonumber\\
&\geq (1-\alpha_{k})\norm{X_{k+1}-X_{k}}^{2}+(\alpha^{2}_{k}-\alpha_{k})\norm{X_{k}-X_{k-1}}^{2},
\label{eq:Xk1}
\end{align}
and Lemma \ref{lem:ab} gives
\begin{equation}\label{eq:Z1}
\norm{Z_{k}-p}^{2}=(1+\alpha_{k})\norm{X_{k}-p}^{2}-\alpha_{k}\norm{X_{k-1}-p}^{2}+\alpha_{k}(1+\alpha_{k})\norm{X_{k}-X_{k-1}}^{2}.
\end{equation}
By hypothesis, $\alpha_{k},\rho_{k},\lambda_{k}$ are defined such that $\frac{5/2-2\rho_{k}(1+L\lambda_{k})}{2\rho_{k}(1+L\lambda_{k})}>0$. Then, using both of these relations in the last estimate for $\norm{X_{k+1}-p}^{2}$, we arrive at 
\begin{align*}
\norm{X_{k+1}-p}^{2}&\leq (1+\alpha_{k})\norm{X_{k}-p}^{2}-\alpha_{k}\norm{X_{k-1}-p}^{2}+\alpha_{k}(1+\alpha_{k})\norm{X_{k}-X_{k-1}}^{2}-2\rho_{k}\lambda_{k}\inner{W_{k+1}+p^{\ast},Y_{k}-p}\\
&-\frac{\rho_{k}}{4}\residual^{2}_{\lambda_{k}}(Z_{k})+\frac{5\rho_{k}\lambda_{k}^{2}}{2(1+L\lambda_{k})}\norm{\ce_{k}}^{2}+\frac{\rho_{k}\lambda^{2}_{k}}{2}\norm{U_{k}}^{2}+2\rho_{k}\lambda_{k}\inner{V(Y_{k})-V(p),p-Y_{k}}\\
&-\left(\frac{5}{4\rho_{k}(1+L\lambda_{k})}-1\right)\left[(1-\alpha_{k})\norm{X_{k+1}-X_{k}}^{2}+(\alpha^{2}_{k}-\alpha_{k})\norm{X_{k}-X_{k-1}}^{2}\right].
\end{align*}
Using the respective definitions of the stochastic increments $\Delta M_{k},\Delta N_{k}(p,p^{\ast})$ in \eqref{eq:M} and \eqref{eq:N}, we arrive at
\begin{align}
\notag
\norm{X_{k+1}-p}^{2}&\leq (1+\alpha_{k})\norm{X_{k}-p}^{2}-\alpha_{k}\norm{X_{k-1}-p}^{2}-\frac{\rho_{k}}{4}\residual^{2}_{\lambda_{k}}(Z_{k})\\
&\notag+\Delta M_{k}+\Delta N_{k}(p,p^{\ast})-2\rho_{k}\lambda_{k}\inner{V(Y_{k})-V(p),Y_{k}-p}\\
&\notag+\alpha_{k}\norm{X_{k}-X_{k-1}}^{2}\left(2\alpha_{k}+\frac{5(1-\alpha_{k})}{4\rho_{k}(1+L\lambda_{k})}\right)\\
\label{recur-Xk} &-(1-\alpha_{k})\left(\frac{5}{4\rho_{k}(1+L\lambda_{k})}-1\right)\norm{X_{k+1}-X_{k}}^{2}.
\end{align}
\qed
\end{proof}

Recall that $Y_{k}$ is $\hat{\scrF}_{k}$-measurable. By the law of iterated expectations, we therefore see 
\begin{align*}
\Ex[\Delta N_{k}(p,p^{\ast})\vert\scrF_{k}]=\Ex\left\{\Ex[\Delta N_{k}(p,p^{\ast})\vert\hat{\scrF}_{k}]\vert\scrF_{k}\right\}=2\rho_{k}\lambda_{k} \Ex[\inner{p^{\ast},p-Y_{k}}\vert\scrF_{k}], 
\end{align*}
for all $(p,p^{\ast})\in\gr(F)$. Observe that if we choose $(p,0)\in\gr(F)$, meaning that $p\in\setS$, then $\Delta N_{k}(p,0)\equiv\Delta N_{k}(p)$ is a martingale difference sequence. Furthermore, for all $k\geq 1$, 
\begin{equation}\label{eq:boundM}
\Ex[\Delta M_{k}\vert\scrF_{k}]\leq \frac{5\rho_{k}\lambda_{k}^2}{1+L\lambda_{k}}\Ex[\norm{W_{k}}^{2}\vert\scrF_{k}]+\lambda^{2}_{k}\left(\frac{5\rho_{k}}{1+L\lambda_{k}}+\frac{\rho_{k}}{2}\right)\Ex[\norm{U_{k}}^{2}\vert\scrF_{k}]\leq \frac{\ca_{k}\sigma^{2}}{m_{k}},  
\end{equation}
where $\ca_{k}\eqdef\lambda^{2}_{k}\left( \frac{10\rho_{k}}{1+L\lambda_{k}}+\frac{\rho_{k}}{2}\right)$.\\

To prove the a.s. convergence of the stochastic process $(X_{k})_{k\in\N}$, we rely on the following preparations. Motivated by the analysis of deterministic inertial schemes, we are interested in a regime under which $\alpha_{k}$ is non-decreasing.

For a fixed reference point $p\in\setH$, define the anchor sequences $\phi_{k}(p)\eqdef \frac{1}{2}\norm{X_{k}-p}^{2}$, and the energy sequence $\Delta_{k}\eqdef \frac{1}{2}\norm{X_{k}-X_{k-1}}^{2}.$ In terms of these sequences, we can rearrange the fundamental recursion from Lemma \ref{lem:Recursion} to obtain 
\begin{align*}
\phi_{k+1}(p)-&\alpha_{k}\phi_{k}(p)-(1-\alpha_{k})\left(\frac{5}{4\rho_{k}(1+L\lambda_{k})}-1\right)\Delta_{k+1}\leq \phi_{k}(p)-\alpha_{k}\phi_{k-1}(p)\\
&-(1-\alpha_{k})\left(\frac{5}{4\rho_{k}(1+L\lambda_{k})}-1\right)\Delta_{k}+\frac{1}{2}\Delta M_{k}+\frac{1}{2}\Delta N_{k}(p,p^{\ast})\\
&-\rho_{k}\lambda_{k}\inner{V(Y_{k})-V(p),Y_{k}-p}+\Delta_{k}\left[2\alpha^{2}_{k}+(1-\alpha_{k})\left(1-\frac{5(1-\alpha_{k})}{4\rho_{k}(1+L\lambda_{k})}\right)\right]\\
& -\frac{\rho_{k}}{8}\residual^{2}_{\lambda_{k}}(Z_{k}).
\end{align*} 
For a given pair $(p,p^{\ast})\in\gr(F)$, define 
 \begin{equation}
Q_{k}(p)\eqdef \phi_{k}(p)-\alpha_{k}\phi_{k-1}(p)+(1-\alpha_{k})\left(\frac{5}{4\rho_{k}(1+L\lambda_{k})}-1\right)\Delta_{k}.
 \end{equation}
Then, in terms of the sequence 
\begin{equation}
\beta_{k+1}\eqdef (1-\alpha_{k})\left(\frac{5}{4\rho_{k}(1+L\lambda_{k})}-1\right)-(1-\alpha_{k+1})\left(\frac{5}{4\rho_{k+1}(1+L\lambda_{k+1})}-1\right), \label{beta}
\end{equation}
and using the monotonicity of $V$, guaranteeing that $\inner{V(Y_{k})-V(p),Y_{k}-p}\geq 0$, we get 
\begin{align*}
Q_{k+1}(p)&\leq Q_{k}(p)-\frac{\rho_{k}}{8}\residual^{2}_{\lambda_{k}}(Z_{k})+\frac{1}{2}\Delta M_{k}+\frac{1}{2}\Delta N_{k}(p,p^{\ast})+(\alpha_{k}-\alpha_{k+1})\phi_{k}(p)\\
&+\left[2\alpha^{2}_{k}+(1-\alpha_{k})\left(1-\frac{5(1-\alpha_{k})}{4\rho_{k}(1+L\lambda_{k})}\right)\right]\Delta_{k}-\beta_{k+1}\Delta_{k+1}.
\end{align*}
Defining
\[
\theta_{k}\eqdef \frac{\rho_{k}}{8}\residual^{2}_{\lambda_{k}}(Z_{k})-\left[2\alpha^{2}_{k}+(1-\alpha_{k})\left(1-\frac{5(1-\alpha_{k})}{4\rho_{k}(1+L\lambda_{k})}\right)\right]\Delta_{k},
\]
 we arrive at 
\begin{equation}\label{eq:Q1}
Q_{k+1}(p)\leq Q_{k}(p)-\theta_{k}+\frac{1}{2}\Delta M_{k}+\frac{1}{2}\Delta N_{k}(p,p^{\ast})+(\alpha_{k}-\alpha_{k+1})\phi_{k}(p)-\beta_{k}\Delta_{k+1}.
\end{equation}
Our aim is to use $Q_{k}(p)$ as a suitable energy function for RISFBF. For that to work, we need to identify a specific parameter sequence pair $(\rho_{k},\alpha_{k})$ so that $\beta_{k}\geq 0$ and $\theta_{k} \geq 0$, taking the following design criteria into account:
\begin{enumerate}
\item $\alpha_{k}\in(0,\bar{\alpha}]\subset(0,1)$ for all $k\geq 1$;
\item $\alpha_{k}$ is non-decreasing with 
\begin{equation}
\sup_{k\geq 1}\alpha_{k}=\bar{\alpha},\text{ and} \inf_{k\geq 1}\alpha_{k}>0.
\end{equation}
\end{enumerate}
Incorporating these two restrictions on the inertia parameter $\alpha_{k}$, we are left with the following constraints: 
\begin{equation}\label{eq:signs}
\beta_{k}\geq 0\text{ and }2\alpha_{k}^{2}+(1-\alpha_{k})\left(1-\frac{5(1-\alpha_{k})}{4\rho_{k}(1+L\lambda_{k})}\right)\leq 0.
\end{equation}
To identify a constellation of parameters $(\alpha_{k},\rho_{k})$ satisfying these two conditions, define 
\begin{equation}\label{def-hk}
h_{k}(x,y)\eqdef (1-x)\left(\frac{5}{4y(1+L\lambda_{k})}-1\right).
\end{equation}
Then, 
\begin{align*}
0&\geq 2\alpha_{k}^{2}-(1-\alpha_{k})\left(h_{k}(\alpha_{k},\rho_{k})+(1-\alpha_{k})-1\right)\\
&=2\alpha^{2}_{k}+\alpha_{k}(1-\alpha_{k})-(1-\alpha_{k})h_{k}(\alpha_{k},\rho_{k})\\
&=\alpha_{k}(1+\alpha_{k})-(1-\alpha_{k})h_{k}(\alpha_{k},\rho_{k}),
\end{align*}
which gives 
\begin{equation}\label{eq:h1}
h_{k}(\alpha_{k},\rho_{k})\geq\frac{\alpha_{k}(1+\alpha_{k})}{1-\alpha_{k}}.
\end{equation}
Solving this condition for $\rho_{k}$ reveals that $\frac{1}{\rho_{k}}\geq \frac{4(2\alpha^{2}_{k}-\alpha_{k}+1)(1+L\lambda_{k})}{5(1-\alpha_{k})^{2}}.$ Using the design condition $\alpha_{k}\leq\bar{\alpha}<1$, we need to choose the relaxation parameter $\rho_{k}$ so that $\rho_{k}\leq\frac{5(1-\alpha_k)^{2}}{4(1+L\lambda_{k})(2\alpha^{2}_{k}-\alpha_{k}+1)}$. This suggests to use the relaxation sequence $\rho_{k}=\rho_k(\alpha_k,\lambda_k)\eqdef \frac{5(1-\bar{\eps})(1-\bar{\alpha})^{2}}{4(1+L\lambda_{k})(2\alpha^{2}_{k}-\alpha_{k}+1)}$. It remains to verify that with this choice we can guarantee $\beta_{k}\geq 0.$ This can be deduced as follows: Recalling \eqref{def-hk}, we get
\[
    h_k(\alpha_k,\rho_k) = (1-\alpha_{k})\left(\tfrac{5}{4\rho_{k}(1+L\lambda)}-1\right)=\tfrac{(1-\alpha_k)(2\alpha_k^2-\alpha_k+1)}{(1-\bar{\eps})(1-\bar{\alpha})^{2}}+\alpha_{k}-1.
\]
In particular, we note that if $f(\alpha) \triangleq  {\tfrac{(1-\alpha)(2\alpha^2-\alpha+1)}{(1-\bar{\eps})(1-\bar{\alpha})^2}}+\alpha-1$, then
\begin{align*}
  f'(\alpha) &= \tfrac{(1-\alpha)(4\alpha-1)-(2\alpha^2-\alpha+1)}{(1-\bar{\eps})(1-\bar{\alpha})^2}+1= \tfrac{-6\alpha^2+6\alpha-2+(1-\bar{\eps})(1-\bar{\alpha})^2}{(1-\bar{\eps})(1-\bar{\alpha})^2} = \tfrac{-6(\alpha-\frac{1}{2})^2-\frac{1}{2}+(1-\bar{\eps})(1-\bar{\alpha})^2}{(1-\bar{\eps})(1-\bar{\alpha})^2}
  \end{align*}
  We consider two cases:\\
Case 1: $\bar{\alpha}\leq 1/2$. In this case 
\begin{align*}
f'(\alpha)\leq \tfrac{-6(\bar{\alpha}-\frac{1}{2})^2-\frac{1}{2}+(1-\bar{\eps})(1-\bar{\alpha})^2}{(1-\bar{\eps})(1-\bar{\alpha})^{2}}\leq \frac{-5\bar{\alpha}^{2}+4\bar{\alpha}-1}{(1-\bar{\eps})(1-\bar{\alpha})^2}<0.
\end{align*}
Case 2: $1/2<\bar{\alpha}<1$. In this case 
\[
f'(\alpha)\leq \frac{-6(1/2-1/2)^{2}-1/2+(1-\bar{\eps})(1-\bar{\alpha})^{2}}{(1-\bar{\eps})(1-\bar{\alpha})^{2}}\leq \frac{-1/2+(1-\bar{\eps})(1-1/2)^{2}}{(1-\bar{\eps})(1-\bar{\alpha})^{2}}<0.
\]
Thus, $f(\alpha)$ is decreasing in $\alpha \in (0,\bar{\alpha}]$, where $0<\bar{\alpha}<1$. \\

Using these relations, we see that \eqref{eq:Q1} reduces to 
\begin{equation}\label{eq:Q2}
Q_{k+1}(p)\leq Q_{k}(p)-\theta_{k}+\frac{1}{2}\Delta M_{k}+\frac{1}{2}\Delta N_{k}(p,p^{\ast}),
\end{equation}
where $\theta_{k}\geq 0$. This is the basis for our proof of Theorem \ref{th:convergence}. 

\begin{proof}[Proof of Theorem \ref{th:convergence}]
We start with (i). Consider \eqref{eq:Q2}, with the special choice $p^{\ast}=0$, so that $p\in\setS$. Taking conditional expectations on both sides of this inequality, we arrive at 
\[
\Ex[Q_{k+1}(p)\vert\scrF_{k}]\leq Q_{k}(p)-\theta_{k}+\psi_{k},
\]
where $\psi_{k}\eqdef \frac{\ca_{k}\sigma^{2}}{2m_{k}}$. By design of the relaxation sequence $\rho_{k}$, we see that
\begin{align*}
\ca_{k}&=\lambda^{2}_{k}\left( \frac{10\rho_{k}}{1+L\lambda_{k}}+\frac{\rho_{k}}{2}\right)=\lambda^{2}_{k}\left( \frac{10}{1+L\lambda_{k}}+\frac{1}{2}\right)\frac{5(1-\bar{\eps})}{4(2\alpha^{2}_{k}-\alpha_{k}+1)(1+L\lambda_{k})}.
\end{align*}
Since $\lim_{k\to\infty}\lambda_{k}=\lambda\in(0,1/4L)$, and $\lim_{k\to\infty}\alpha_{k}=\bar{\alpha}\in(0,1)$, we conclude that the sequence $(\ca_{k})_{k\in\N}$ is bounded. Consequently, thanks to Assumption \ref{ass:batch}, the sequence $(\psi_{k})_{k\in\N}$ is in $\ell^{1}_{+}(\N)$. We next claim that $Q_{k}(p)\geq 0$. To verify this, note that 
\begin{align*}
Q_{k}(p)&=\frac{1}{2}\norm{X_{k}-p}^{2}-\frac{\alpha_{k}}{2}\norm{X_{k-1}-p}^{2}+\frac{(1-\alpha_{k})}{2}\left(\frac{5}{4\rho_{k}(1+L\lambda_{k})}-1\right)\norm{X_{k}-X_{k-1}}^{2}\\
&=\frac{1}{2}\norm{X_{k}-p}^{2}+\left(\frac{(1-\alpha_{k})(2\alpha_{k}^{2}+1-\alpha_{k})}{(1-\bar{\eps})(1-\bar{\alpha})^{2}}-1+\alpha_{k}\right)\frac{1}{2}\norm{X_{k}-X_{k-1}}^{2}-\frac{\alpha_{k}}{2}\norm{X_{k-1}-p}^{2}\\
&\geq \frac{1}{2}\norm{X_{k}-p}^{2}+\left(\frac{(1-\alpha_{k})(\alpha_{k}^{2}+1-\alpha_{k})}{(1-\bar{\eps})(1-\bar{\alpha})^{2}}-1+\alpha_{k}\right)\frac{1}{2}\norm{X_{k}-X_{k-1}}^{2}-\frac{\alpha_{k}}{2}\norm{X_{k-1}-p}^{2}\\
&\geq \frac{1}{2}\norm{X_{k}-p}^{2}+\left(\frac{(1-\alpha_{k})(\alpha_{k}^{2}+1-\alpha_{k})}{(1-\alpha_{k})^{2}}-1+\alpha_{k}\right)\frac{1}{2}\norm{X_{k}-X_{k-1}}^{2}-\frac{\alpha_{k}}{2}\norm{X_{k-1}-p}^{2}\\
&=(\alpha_{k}+(1-\alpha_{k}))\norm{X_{k}-p}^{2}+\left(\alpha_{k}+\frac{\alpha^{2}_{k}}{1-\alpha_{k}}\right)\norm{X_{k}-X_{k-1}}^{2}-\alpha_{k}\norm{X_{k-1}-p}^{2}\\
&\geq \frac{\alpha_{k}}{2}\left(\norm{X_{k}-p}^{2}+\norm{X_{k}-X_{k-1}}^{2}\right)-\frac{\alpha_{k}}{2}\norm{X_{k-1}-p}^{2}+\alpha_{k}\norm{X_{k}-p}\cdot\norm{X_{k}-X_{k-1}}\\
&\geq \frac{\alpha_{k}}{2}\left(\norm{X_{k}-p}+\norm{X_{k}-X_{k-1}}\right)^{2}-\frac{\alpha_{k}}{2}\norm{X_{k-1}-p}^{2}\geq 0.
\end{align*}
where the first and second inequality uses $\bar{\eps}<1$ and $\alpha_{k}\leq\bar{\alpha}\in(0,1)$, the third inequality makes use of the Young inequality: $\frac{1-a}{2a}\norm{X_{k}-p}^{2}+\frac{a}{2(1-a)}\norm{X_{k}-X_{k-1}}^{2}\geq \norm{X_{k}-p}\cdot\norm{X_{k}-X_{k-1}}$. Finally, the fourth inequality uses the triangle inequality $\norm{X_{k-1}-p}\leq \norm{X_{k}-X_{k-1}}+\norm{X_{k}-p}$. Lemma \ref{lem:RS} readily yields the existence of an a.s. finite limiting random variable $Q_{\infty}(p)$ such that $Q_{k}(p)\to Q_{\infty}(p)$, $\Pr$-a.s., and $(\theta_{k})_{k\in\N}\in\ell^{1}_{+}(\F)$. Since $\lambda_{k}\to \lambda$, we get $\lim_{k\to\infty}\rho_{k}=\frac{5(1-\bar{\eps})(1-\bar{\alpha})^{2}}{4(1+L\lambda)(2\bar{\alpha}^{2}+1-\bar{\alpha})}$. Hence,
\begin{align*}
&\lim_{k\to\infty}\left(2\alpha_{k}^2-(1-\alpha_{k})\left(1-\tfrac{5(1-\alpha_{k})}{4\rho_{k}(1+L\lambda_{k})}\right)\right)\norm{X_{k}(\omega)-X_{k-1}(\omega)}^2=0,\text{ and }\\
&\lim_{k\to\infty}\frac{\rho_{k}}{4}\residual^{2}_{\lambda_{k}}(Z_{k}(\omega))=0.
\end{align*}
$\Pr$-a.s. We conclude that $\lim_{k\to\infty}\residual^{2}_{\lambda_{k}}(Z_{k})=0$, $\Pr$-a.s..

To prove (ii) observe that, since $\bar{\eps}\in(0,1)$ and $\lim_{k\to\infty}\alpha_{k}=\bar{\alpha}$, it follows 
\[
\left[2\alpha_{k}^{2}+(1-\alpha_{k})\left(1-\frac{5(1-\alpha_{k})}{4\rho_{k}(1+L\lambda_{k})}\right)\right]\leq \frac{-\bar{\eps}}{1-\bar{\eps}}(2\bar{\alpha}^{2}+1-\bar{\alpha})<0.
\]
Consequently,  $\lim_{k\to\infty}\norm{X_{k}-X_{k-1}}^2 =0$, $\Pr$-a.s., and  $\left(\phi_{k}(p)-\alpha_{k}\phi_{k-1}(p)\right)_{k\in\N}$ is almost surely bounded. Hence, for each $\omega\in\Omega$, there exists a bounded random variable $C_{1}(\omega)\in[0,\infty)$ such that 
\[
\phi_{k}(p,\omega)\leq C_{1}(\omega)+\alpha_{k}\phi_{k-1}(p,\omega)\leq C_{1}(\omega)+\bar{\alpha}\phi_{k-1}(p,\omega)\qquad\forall k\geq 1.
\]
Iterating this relation, using the fact that $\bar{\alpha}\in[0,1)$, we easily derive 
\[
\phi_{k}(p,\omega)\leq \frac{C_{1}(\omega)}{1-\bar{\alpha}}+\bar{\alpha}^{k}\phi_{1}(p,\omega).
\]
Hence, $(\phi_{k}(p))_{k\in\N}$ is a.s. bounded, which implies that $(X_{k})_{k\in\N}$ is bounded $\Pr$-a.s. We next claim that $(\norm{X_{k}-p})_{k\in\N}$ converges to a $[0,\infty)$-valued random variable $\Pr$-a.s. Indeed, take $\omega\in\Omega$ such that $\phi_{k}(p,\omega)\equiv\phi_{k}(\omega)$ is bounded. Suppose there exists $\ct_{1}(\omega)\in[0,\infty),\ct_{2}(\omega)\in[0,\infty)$, and subsequences $(\phi_{k_{j}}(\omega))_{j\in\N}$ and $(\phi_{l_{j}}(\omega))_{j\in\N}$ such that $\phi_{k_{j}}(\omega)\to\ct_{1}(\omega)$ and $\phi_{l_{j}}(\omega)\to\ct_{2}(\omega)>\ct_{1}(\omega)$. Then, $\lim_{j\to\infty}Q_{k_{j}}(p)(\omega)=Q_{\infty}(p)(\omega)=(1-\bar{\alpha})\ct_{1}(\omega)<(1-\bar{\alpha})\ct_{2}(\omega)=\lim_{j\to\infty}Q_{l_{j}}(\omega)=Q_{\infty}(p)(\omega)$, a contradiction. It follows that $\ct_{1}(\omega)=\ct_{2}(\omega)$ and, in turn, $\phi_{k}(\omega)\to\ct(\omega)$. Thus, for each $p\in\setS$, $\phi_{k}(p)\to\ct$ $\Pr$-a.s.\\
Since we assume that $\setH$ is separable, \cite[Prop 2.3(iii)]{ComPes15} guarantees that there exists a set $\Omega_{0}\in\scrF$ with $\Pr(\Omega_{0})=1$, and, for every $\omega\in\Omega_{0}$ and every $p\in\setS$, the sequence $(\norm{X_{k}(\omega)-p})_{k\in\N}$ converges. 

We next show that all weak limit points of $(X_{k})_{k\in\N}$ are contained in $\setS$. Let $\omega\in\Omega$ such that $(X_{k}(\omega))_{k\in\N}$ is bounded. Thanks to \cite[Lemma 2.45]{BauCom16}, we can find a weakly convergent subsequence $(X_{k_{j}}(\omega))_{j\in\N}$ with limit $\chi(\omega)$, i.e. for all
$u\in\setH$ we have
$\lim_{j\to\infty}\inner{X_{k_{j}}(\omega),u}=\inner{\chi(\omega),u}$.
This implies 
\[
\lim_{j\to\infty}\inner{Z_{k_{j}}(\omega),u}=\lim_{j\to\infty}\inner{X_{k_{j}}(\omega),u}+\lim_{j\to\infty}\alpha_{k_{j}}\inner{X_{k_{j}}(\omega)-X_{k_{j-1}}(\omega),u}=\inner{\chi(\omega),u},
\]
showing that $Z_{k_{j}}(\omega)\wlim \chi(\omega)$. Along this weakly converging subsequence, define 
\[
r_{k_{j}}(\omega)\eqdef Z_{k_{j}}(\omega)-J_{\lambda_{k_{j}}T}(Z_{k_{j}}(\omega)-\lambda_{k_{j}}V(Z_{k_{j}}(\omega))).
\]
Clearly, $\residual_{\lambda_{k_{j}}}(Z_{k_{j}}(\omega))=\norm{r_{k_{j}}(\omega)}$, so that $\lim_{j\to\infty}r_{k_{j}}(\omega)=0$. By definition 
\[
\frac{1}{\lambda_{k_{j}}}r_{k_{j}}(\omega)-V(Z_{k_{j}}(\omega))+V\left(Z_{k_{j}}(\omega)-r_{k_{j}}(\omega)\right)\in F(Z_{k_{j}}(\omega)-r_{k_{j}}(\omega)).
\]
Since $V$ and $F=T+V$ are maximally monotone, their graphs are sequentially closed in the weak-strong topology $\setH^{\text{weak}}\times\setH^{\text{strong}}$ \cite[Prop. 20.33(ii)]{BauCom16}. Therefore, by the strong convergence of the sequence $(r_{k_{j}}(\omega))_{j\in\N}$, we deduce weak convergence of the sequence 
$(Z_{k_{j}}(\omega)-r_{k_{j}}(\omega),Z_{k_{j}}(\omega))_{j\in\N}\wlim (\chi(\omega),\chi(\omega))$. Therefore $\frac{1}{\lambda}r_{k_{j}}(\omega)-V(Z_{k_{j}}(\omega))+V\left(Z_{k_{j}}(\omega)-r_{k_{j}}(\omega)\right)\to 0$. Hence, $0\in (T+V)(\chi(\omega))$, showing that $\chi(\omega)\in\setS$. Invoking \cite[Prop 2.3(iv)]{ComPes15}, we conclude that $(X_{k})_{k\in\N}$ converges weakly $\Pr$-a.s to an $\setS$-valued random variable. 

\noindent
We now establish (iii). Let $q_{k}\eqdef\Ex[Q_{k}(p)]$, so that \eqref{eq:Q2} yields the recursion
\[
q_{k}\leq q_{k-1}-\Ex[\theta_{k}]+\psi_{k}.
\]
By Assumption \ref{ass:batch}, and the definition of all sequences involved, we see that $\sum_{k=1}^{\infty}\psi_{k}<\infty$. Hence, a telescopian argument gives 
\[
q_{k}-q_{0}=\sum_{i=1}^{k}(q_{i}-q_{i-1})\leq -\sum_{i=1}^{k}\Ex[\theta_{i}]+\sum_{i=1}^{k}\psi_{i}\leq -\sum_{i=1}^{k}\Ex[\theta_{i}]+\sum_{i=1}^{\infty}\psi_{i}.
\]
Hence, for all $k\geq 1$, rearranging the above reveals 
\[
\sum_{i=1}^{k}\Ex[\theta_{i}]\leq q_{0}+\sum_{i=1}^{\infty}\psi_{i}<\infty.
\]
Letting $k\to\infty$, we conclude $\left(\Ex[\theta_{k}]\right)_{k\in\N}\in\ell^{1}_{+}(\N)$. Classically, this implies $\theta_{k}\to 0$ $\Pr$-a.s. By a simple majorization argument, we deduce that $\Pr$-a.s.
\begin{align*}
\infty&>\sum_{k=1}^{\infty}\left\{\frac{\rho_{k}}{8}\residual^{2}_{\lambda_{k}}(Z_{k})-\left[2\alpha^{2}_{k}+(1-\alpha_{k})\left(1-\frac{5(1-\alpha_{k})}{4\rho_{k}(1+L\lambda_{k})}\right)\right]\Delta_{k}\right\}\\
&\geq \sum_{k=1}^{\infty}\left[(1-\alpha_{k})\left(\frac{5(1-\alpha_{k})}{4\rho_{k}(1+L\lambda_{k})}-1\right)-2\alpha^{2}_{k}\right]\Delta_{k}.
\end{align*}
\qed
\end{proof}

\begin{remark}\label{rem:1}
The above result gives some indication of the balance between the inertial effect and the relaxation effect. Our analysis revealed that the maximal value of the relaxation parameter is $\rho\leq \frac{5(1-\bar{\eps})(1-\alpha)^{2}}{4(1+L\lambda)(2\alpha^{2}-\alpha+1)}$. This is closely aligned with the maximal relaxation value exhibited in Remark 2.13 of \cite{AttCab20}. Specifically, the function $\rho_{m}(\alpha,\eps)=\frac{5(1-\eps)(1-\alpha)^{2}}{4(1+L\lambda)(2\alpha^{2}-\alpha+1)}$. This function is decreasing in $\alpha$.  For this choice of parameters, one observes that for $\alpha\to 0$ we get $\rho\to\frac{5(1-\eps)}{4(1+L\lambda)}$ and for $\alpha\to 1$ it is observed $\rho\to 0$.
\end{remark}

As an immediate corollary of Theorem \ref{th:convergence}, we obtain a convergence result when all parameter sequences are constant. 

\begin{corollary}[{\bf Asymptotic convergence under constant inertia and relaxation}]
Let the same Assumptions as in Theorem \ref{th:convergence} hold. Consider Algorithm \ac{RISFBF} with the constant parameter sequences $\alpha_{k}\equiv \alpha\in(0,1),\lambda_{k}\equiv\lambda\in(0,\tfrac{1}{4L})$ and $\rho_{k}=\rho<\frac{5(1-\alpha)^{2}}{4(1+L\lambda)(2\alpha^{2}+1-\alpha)}$. Then $(X_{k})_{k\in\N}$ converges weakly $\Pr$-a.s. to a limiting random variable with values in $\setS$.
\end{corollary}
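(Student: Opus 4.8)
The plan is to recover the corollary as a direct specialization of Theorem \ref{th:convergence}, the only nontrivial point being to reverse-engineer the auxiliary constant $\bar{\eps}$ so that the relaxation sequence prescribed by the theorem collapses onto the prescribed constant $\rho$. First I would put $\bar{\alpha}\eqdef\alpha$ and observe that the constant sequence $\alpha_{k}\equiv\alpha$ is (trivially) non-decreasing with $\lim_{k\to\infty}\alpha_{k}=\bar{\alpha}=\alpha\in(0,1)$, and that the constant sequence $\lambda_{k}\equiv\lambda$ is a convergent sequence contained in $(0,\tfrac{1}{4L})$ with limit $\lambda\in(0,\tfrac{1}{4L})$. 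Thus the hypotheses of Theorem \ref{th:convergence} concerning the inertia and step-length sequences are met verbatim.

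It then remains to match the relaxation parameter. Specializing the theorem's formula $\rho_{k}=\tfrac{5(1-\bar{\eps})(1-\bar{\alpha})^{2}}{4(2\alpha_{k}^{2}-\alpha_{k}+1)(1+L\lambda_{k})}$ to the present constant sequences yields the $k$-independent value
\[
\rho_{k}=\frac{5(1-\bar{\eps})(1-\alpha)^{2}}{4(2\alpha^{2}-\alpha+1)(1+L\lambda)}.
\]
I would therefore \emph{define}
\[
\bar{\eps}\eqdef 1-\frac{4\rho(2\alpha^{2}-\alpha+1)(1+L\lambda)}{5(1-\alpha)^{2}},
\]
which is exactly the unique solution of the equation $\rho=\tfrac{5(1-\bar{\eps})(1-\alpha)^{2}}{4(2\alpha^{2}-\alpha+1)(1+L\lambda)}$. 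Since the relaxation constant satisfies $0<\rho<\tfrac{5(1-\alpha)^{2}}{4(1+L\lambda)(2\alpha^{2}+1-\alpha)}$, a short check shows $\bar{\eps}\in(0,1)$: positivity of $\rho$ forces the subtracted fraction to be positive, hence $\bar{\eps}<1$, while the strict upper bound on $\rho$ forces that same fraction to be strictly less than $1$, hence $\bar{\eps}>0$.

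With $\bar{\alpha}=\alpha$ and this $\bar{\eps}\in(0,1)$, the constant triple $(\alpha_{k},\lambda_{k},\rho_{k})\equiv(\alpha,\lambda,\rho)$ is precisely the parameter configuration prescribed by Theorem \ref{th:convergence}. Invoking conclusion (ii) of that theorem then immediately delivers the claimed weak $\Pr$-almost-sure convergence of $(X_{k})_{k\in\N}$ to an $\setS$-valued limiting random variable, completing the argument. I do not anticipate any genuine obstacle here: the entire content is the algebraic inversion identifying $\bar{\eps}$ together with the verification that it lands in the admissible open interval $(0,1)$, while the non-decreasing and convergent properties of constant sequences are immediate.
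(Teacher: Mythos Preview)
Your proposal is correct and matches the paper's intended approach: the paper states the corollary without proof, simply labeling it an immediate consequence of Theorem~\ref{th:convergence}, and you have correctly spelled out the only nontrivial detail, namely the choice of $\bar{\eps}$ that makes the theorem's relaxation formula collapse to the prescribed constant $\rho$ and the verification that this $\bar{\eps}$ lies in $(0,1)$.
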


In fact, the a.s. convergence with a larger $\lambda_k$ is allowed as shown in the following corollary.
\begin{corollary}[{\bf Asymptotic convergence under larger steplength}]
Let the same Assumptions as in Theorem \ref{th:convergence} hold. Consider Algorithm \ac{RISFBF} with the constant parameter sequences $\alpha_{k}\equiv \alpha\in(0,1),\lambda_{k}\equiv\lambda\in(0,\tfrac{1-\nu}{2L})$ and $\rho_{k}=\rho<\frac{(3-\nu)(1-\alpha)^{2}}{2(1+L\lambda)(2\alpha^{2}+1-\alpha)}$, where $0<\nu<1$. Then $(X_{k})_{k\in\N}$ converges weakly $\Pr$-a.s. to a limiting random variable with values in $\setS$.
\end{corollary}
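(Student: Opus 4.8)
The plan is to re-run the proof of Theorem~\ref{th:convergence} essentially verbatim, changing only a single algebraic split inside Lemma~\ref{lem:Recursion}; both the enlarged step-size range and the modified relaxation bound originate from that one place. Recall that in the derivation of the fundamental recursion the term $-\rho_k(1-2L^2\lambda_k^2)\norm{Z_k-Y_k}^2$ was decomposed into $-\rho_k(\tfrac12-2L^2\lambda_k^2)\norm{Z_k-Y_k}^2$, which was absorbed into the $\norm{X_{k+1}-Z_k}^2$ term through \eqref{bd-eq2sc}, and $-\tfrac{\rho_k}{2}\norm{Z_k-Y_k}^2$, which was converted into a residual via Lemma~\ref{lem:YZG}. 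I would replace the balanced split $1=\tfrac12+\tfrac12$ by the asymmetric split $1=(1-\nu)+\nu$ for the fixed $\nu\in(0,1)$, namely write
\[
-\rho_k(1-2L^2\lambda_k^2)\norm{Z_k-Y_k}^2 = -\rho_k\bigl((1-\nu)-2L^2\lambda_k^2\bigr)\norm{Z_k-Y_k}^2 - \nu\rho_k\norm{Z_k-Y_k}^2,
\]
and bound the last summand through Lemma~\ref{lem:YZG} by $\nu\rho_k\lambda_k^2\norm{U_k}^2-\tfrac{\nu\rho_k}{2}\residual^2_{\lambda_k}(Z_k)$.

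With this change I would retrace the remaining estimates. The multiplier applied in \eqref{l-lambda-2} becomes $\tfrac{\rho_k((1-\nu)-2L\lambda_k)}{1+L\lambda_k}$, which is a positive scalar exactly when $\lambda_k<\tfrac{1-\nu}{2L}$, and the elementary inequality $((1-\nu)-2L\lambda_k)(1+L\lambda_k)\le(1-\nu)-2L^2\lambda_k^2$ (which holds with slack $(1+\nu)L\lambda_k$) lets the passage to \eqref{bd-eq2} proceed unchanged. A short computation then shows that the coefficient multiplying $\norm{X_{k+1}-Z_k}^2$ equals $\tfrac{(3-\nu)-2\rho_k(1+L\lambda_k)}{2\rho_k(1+L\lambda_k)}$, so that the quantity $\tfrac{5}{4\rho_k(1+L\lambda_k)}$ appearing throughout the proof of Theorem~\ref{th:convergence} is everywhere replaced by $\tfrac{3-\nu}{2\rho_k(1+L\lambda_k)}$ (the case $\nu=\tfrac12$ recovering the original constant $\tfrac54$). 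Propagating this substitution into the design function \eqref{def-hk}, now $h_k(x,y)=(1-x)\bigl(\tfrac{3-\nu}{2y(1+L\lambda_k)}-1\bigr)$, condition \eqref{eq:h1} yields the admissible range $\rho_k\le\tfrac{(3-\nu)(1-\alpha_k)^2}{2(1+L\lambda_k)(2\alpha_k^2-\alpha_k+1)}$, matching the bound in the statement.

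It then remains to specialize to the constant parameters $\alpha_k\equiv\alpha$, $\lambda_k\equiv\lambda$, $\rho_k\equiv\rho$. A constant $\alpha_k$ is trivially non-decreasing with limit $\alpha$, so the sign sequence $\beta_{k}$ of \eqref{beta} vanishes identically and $\beta_k\ge0$ is automatic; the strict inequality $\rho<\tfrac{(3-\nu)(1-\alpha)^2}{2(1+L\lambda)(2\alpha^2+1-\alpha)}$ now plays the role of the slack previously supplied by $\bar\eps$ and guarantees $\theta_k\ge0$ together with $Q_k(p)\ge0$. The latter holds because a strictly smaller $\rho$ only enlarges the nonnegative coefficient of $\norm{X_k-X_{k-1}}^2$ in $Q_k(p)$, so the Young/triangle-inequality chain that bounds $Q_k(p)$ from below goes through without alteration. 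With $\beta_k\ge0$, $\theta_k\ge0$, $\sum_k\psi_k<\infty$ by Assumption~\ref{ass:batch}, and $\Delta N_k(p)$ a martingale difference for $p\in\setS$, the Robbins--Siegmund argument via Lemma~\ref{lem:RS} and the weak-limit-point analysis of Theorem~\ref{th:convergence} apply unchanged, delivering weak $\Pr$-a.s.\ convergence of $(X_k)_{k\in\N}$ to an $\setS$-valued random variable.

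The main obstacle, and the only genuinely nonroutine bookkeeping, is to confirm that the balanced split $\tfrac12+\tfrac12$ is the \emph{unique} source of both the constant $\tfrac14L$ in the step-size bound and the factor $\tfrac54$ in the relaxation bound, so that no hidden dependence on these constants survives elsewhere in the chain \eqref{bd-eq1sc}--\eqref{recur-Xk}; once this is verified, every subsequent estimate is a literal substitution $\tfrac54\mapsto\tfrac{3-\nu}{2}$. I expect the positivity check $\rho<\tfrac{3-\nu}{2(1+L\lambda)}$, needed so that the coefficient of $\norm{X_{k+1}-Z_k}^2$ stays nonnegative, to require a brief verification; it follows because $(1-\alpha)^2\le 2\alpha^2-\alpha+1$, so the displayed bound on $\rho$ already implies it.
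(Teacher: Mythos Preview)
Your proposal is correct and follows exactly the same approach as the paper: replace the balanced split $1=\tfrac12+\tfrac12$ at \eqref{l-lambda} by $1=(1-\nu)+\nu$, multiply \eqref{bd-eq2sc} by $\tfrac{\rho_k((1-\nu)-2L\lambda_k)}{1+L\lambda_k}$, and then rerun the argument of Theorem~\ref{th:convergence} with $\tfrac54\mapsto\tfrac{3-\nu}{2}$ throughout. The paper's own proof is terser and leaves the downstream bookkeeping (the $\beta_k\equiv0$ observation, the role of the strict inequality on $\rho$ in place of $\bar\eps$, and the $Q_k(p)\ge0$ check) implicit, whereas you spell these out; your added verifications are all correct.
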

\begin{proof}
First we make a slight modification to \eqref{l-lambda} that the following relation holds for $0<\nu<1$
\begin{align}
\notag& \quad (1-\rho_{k})\norm{Z_{k}-p}^{2}+\rho_{k}\norm{R_{k}-p}^{2}-\frac{1-\rho_{k}}{\rho_{k}}\norm{X_{k+1}-Z_{k}}^{2}\\
\notag&\leq\norm{Z_{k}-p}^{2}-\frac{1-\rho_{k}}{\rho_{k}}\norm{X_{k+1}-Z_{k}}^{2}-\rho_{k}((1-\nu)-2L^{2}\lambda^{2}_{k})\norm{Z_{k}-Y_{k}}^{2}\\
\notag&+2\lambda^{2}\rho_{k}\norm{\ce_{k}}^{2}-2\rho_{k}\lambda_{k}\inner{W_{k}+p^{\ast},Y_{k}-p}-\rho_{k}\nu\norm{Y_{k}-Z_{k}}^{2}+2\rho_{k}\lambda_{k}\inner{V(Y_{k})-V(p),p-Y_{k}}.
\end{align}
Then similarly with \eqref{l-lambda-2}, we multiply both sides of \eqref{bd-eq2sc} by $\frac{\rho_{k}((1-\nu)-2L\lambda_{k})}{1+L\lambda_{k}}$, which is positive since $\lambda_k \in (0,\tfrac{1-\nu}{2L})$. The convergence follows in a similar fashion to Theorem~\ref{th:convergence}.
\end{proof}

Another corollary of Theorem \ref{th:convergence} is a strong convergence result, assuming that $F$ is demiregular (cf. Definition \ref{def:demiregular}).
\begin{corollary}[{\bf Strong Convergence under demiregularity}]
Let the same Assumptions as in Theorem \ref{th:convergence} hold. If $F=T+V$ is demiregular, then $(X_{k})_{k\in\N}$ converges strongly $\Pr$-a.s. to a $\setS$-valued random variable. 
\end{corollary}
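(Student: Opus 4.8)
The plan is to upgrade the weak a.s.\ convergence from Theorem~\ref{th:convergence}(ii) to strong convergence by exhibiting, pathwise, a sequence in $\gr(F)$ along which demiregularity can be invoked. Fix a sample point $\omega$ in the full-measure event on which all a.s.\ conclusions of Theorem~\ref{th:convergence} and of its proof hold simultaneously: $X_{k}(\omega)\wlim X(\omega)$ with $X(\omega)\in\setS$, $\norm{X_{k}(\omega)-X_{k-1}(\omega)}\to 0$, and $\residual_{\lambda_{k}}(Z_{k}(\omega))\to 0$. Since $Z_{k}-X_{k}=\alpha_{k}(X_{k}-X_{k-1})$ with $\alpha_{k}\in(0,1)$, the vanishing of $\norm{X_{k}-X_{k-1}}$ forces $Z_{k}(\omega)-X_{k}(\omega)\to 0$ strongly, whence $Z_{k}(\omega)\wlim X(\omega)$ for the \emph{entire} sequence. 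Writing $r_{k}(\omega)\eqdef Z_{k}(\omega)-J_{\lambda_{k}T}(Z_{k}(\omega)-\lambda_{k}V(Z_{k}(\omega)))$ exactly as in the proof of Theorem~\ref{th:convergence}, we have $\norm{r_{k}(\omega)}=\residual_{\lambda_{k}}(Z_{k}(\omega))\to 0$, so $r_{k}(\omega)\to 0$ strongly.

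Next I reuse the graph construction from the weak-convergence argument. The resolvent inclusion gives
\[
v_{k}(\omega)\eqdef \tfrac{1}{\lambda_{k}}r_{k}(\omega)-V(Z_{k}(\omega))+V\bigl(Z_{k}(\omega)-r_{k}(\omega)\bigr)\in F\bigl(Z_{k}(\omega)-r_{k}(\omega)\bigr),
\]
so that $(y_{k}(\omega),v_{k}(\omega))\in\gr(F)$ with $y_{k}(\omega)\eqdef Z_{k}(\omega)-r_{k}(\omega)$. Because $r_{k}(\omega)\to 0$ strongly while $Z_{k}(\omega)\wlim X(\omega)$, I obtain $y_{k}(\omega)\wlim X(\omega)$. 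For the dual component, $\tfrac{1}{\lambda_{k}}r_{k}(\omega)\to 0$ since $\lambda_{k}\to\lambda>0$, and the $L$-Lipschitz continuity of $V$ yields $\norm{V(Z_{k}(\omega))-V(y_{k}(\omega))}\le L\norm{r_{k}(\omega)}\to 0$; hence $v_{k}(\omega)\to 0$ strongly.

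Finally I apply demiregularity. Since $X(\omega)\in\setS$ we have $0\in F(X(\omega))$, and the graph sequence $(y_{k}(\omega),v_{k}(\omega))$ satisfies $y_{k}(\omega)\wlim X(\omega)$ together with $v_{k}(\omega)\to 0\in F(X(\omega))$. Demiregularity of $F$ at $X(\omega)$ (Definition~\ref{def:demiregular}) then delivers $y_{k}(\omega)\to X(\omega)$ strongly. Adding back $r_{k}(\omega)\to 0$ gives $Z_{k}(\omega)\to X(\omega)$, and since $X_{k}(\omega)-Z_{k}(\omega)\to 0$ we conclude $X_{k}(\omega)\to X(\omega)$ strongly. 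As this holds for every $\omega$ in a set of full measure, $(X_{k})_{k\in\N}$ converges strongly $\Pr$-a.s.\ to the $\setS$-valued random variable $X$.

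The only genuine subtlety—and the step I would verify most carefully—is the passage from the \emph{subsequential} weak statements used inside the proof of Theorem~\ref{th:convergence} to statements about the \emph{whole} sequence, which is precisely what allows demiregularity (a property formulated for full sequences) to apply. This hinges on already having full-sequence weak convergence from part (ii) together with $\norm{X_{k}-X_{k-1}}\to 0$; everything else is a routine reuse of the graph construction and the Lipschitz estimate already appearing in the proof of Theorem~\ref{th:convergence}.
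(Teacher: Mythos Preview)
Your proof is correct and follows essentially the same approach as the paper: construct the graph sequence $(y_k,v_k)\in\gr(F)$ via the resolvent inclusion, observe $y_k\wlim X(\omega)$ and $v_k\to 0\in F(X(\omega))$, then invoke demiregularity. The only cosmetic difference is that the paper works along the weakly convergent subsequence $(k_j)$ used inside the proof of Theorem~\ref{th:convergence} and then passes to the full sequence at the end (via the fact that $\norm{X_k-\chi}$ converges), whereas you run the argument on the whole sequence from the outset, which is cleaner since full-sequence weak convergence is already available from part~(ii).
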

\begin{proof}
Set $y_{k_{j}}(\omega)\eqdef Z_{k_{j}}(\omega)-r_{k_{j}}(\omega)$, and $u_{k_{j}}(\omega)\eqdef \frac{1}{\lambda}r_{k_{j}}(\omega)-V(Z_{k_{j}}(\omega))+V(Z_{k_{j}}(\omega)-r_{k_{j}}(\omega))$. We know from the proof of Theorem \ref{th:convergence} that $y_{k_{j}}(\omega)\wlim \chi(\omega)$ and $u_{k_{j}}(\omega)\to 0$. If $F=T+V$ is demiregular then $y_{k_{j}}(\omega)\to \chi(\omega)$. Since we know $r_{k_{j}}(\omega)\to 0$, we conclude $Z_{k_{j}}(\omega)\to \chi(\omega)$. Since $Z_{k}$ and $X_{k}$ have the same limit points, it follows $X_{k}\to \chi$. \qed
\end{proof}

\subsection{Linear Convergence}
\label{sec:Linear}
%%% Linear Convergence
%--------------------------------------------------------------------
% !TEX root = ./RISFBF_Main.tex
%

In this section, we derive a linear convergence rate and prove strong convergence of
the last iterate in the case where the single-valued operator $V$ is
\emph{strongly monotone}. Various linear convergence results in the context of
stochastic approximation algorithms for solving fixed-point problems are
reported in \cite{ComPes19} in the context of the random sweeping processes. In a
general structured monotone inclusion setting \cite{RosVilVuSBF16} derive rate
statements for cocoercive mean operators in the context of forward-backward
splitting methods. More recently, Cui and Shanbhag~\cite{CuiSha20} provide
linear and sublinear rates of convergence for a variance-reduced inexact proximal-point 
scheme for both strongly monotone and monotone inclusion problems. However, to the best of our knowledge, our results \us{are the
first published} for a stochastic operator splitting algorithm, featuring
\emph{relaxation and inertial} effects. Notably, this result does not require imposing Assumption~\ref{ass:SObound}(i) (i.e. the noise process be conditionally unbiased.) \ms{Instead our derivations hold true under a weaker notion of an asymptotically unbiased \ac{SO}.}

\begin{assumption}[\textbf{Asymptotically unbiased \ac{SO}}]
\label{ass:SObiased}
There exists a constant $\cs>0$ such that
\begin{align}\label{e:sigma2}
    \Ex[\norm{U_{k}}^{2}\vert\scrF_{k}]\leq \frac{\cs^{2}}{m_{k}}\text{ and }\Ex[\norm{W_{k}}^{2}\vert\scrF_{k}]\leq \frac{\cs^{2}}{m_{k}}, \qquad \Pr-\text{a.s.} 
\end{align}
for all $k\geq 1$.
\end{assumption}

This definition is rather mild and is \us{imposed} in many simulation-based optimization schemes in finite dimensions. \us{Amongst the more important} ones is the simultaneous perturbation stochastic approximation (SPSA) method pioneered by Spall~\cite{Spall92,Spa97}. \us{In this scheme, it is required that the gradient estimator satisfies an asymptotic unbiasedness requirement; in particular, the bias in the gradient estimator needs to diminish at a suitable rate to ensure asymptotic convergence}. In \us{fact}, this setting has been investigated in detail in the \us{context of stochastic Nash games}~\cite{DuvMerStaVer18}. Further examples for stochastic approximation schemes in a Hilbert-space setting obeying Assumption \ref{ass:SObiased} are \cite{BarRoyStr07,BarRoyStru09} and \cite{Geiersbach:2020tw}. \us{We now discuss an example that further clarifies the requirements on the estimator.} 

\begin{example}
Let $\{\hat{V}_{k}(x,\xi)\}_{k\in\N}$ be a collection of independent random $\setH$-valued vector fields of the form $\hat{V}_{k}(x,\xi)=V(x)+\eps_{k}(x,\xi)$ such that 
\begin{align*}
    \Ex_{\xi}[\eps_{k}(x,\xi)\vert x]=\us{\frac{B_{k}}{\sqrt{m_k}}}\text{ and }\Ex_{\xi}[\norm{\eps_{k}(x,\xi)}^{2}\vert x]\leq \hat{\sigma}^{2}\qquad \Pr-\text{a.s.},
\end{align*}
where $\hat{\sigma}>0$ and \us{$\tilde{b} > 0$ such that $(B_{k})_{k\in\N}$ is an $\setH$-valued sequence satisfying $\|B_k\|^2 \leq \hat{b}^2$ in an a.s. sense}. These statistics \us{can be obtained as } % are obtainable in a typical SPSA approach. Under these estimates, we obtain 
\begin{align*}
\Ex[\norm{U_{k}}^{2}\vert\scrF_{k}]&=\Ex\left[\left\| \frac{1}{m_{k}}\sum_{t=1}^{m_{k}}\eps_{t}(Z_{k})\right\|^{2}\vert\scrF_{k}\right]\\
&=\frac{1}{m_{k}^{2}}\sum_{t=1}^{m_{k}}\Ex[\norm{\eps_{t}(Z_{k})}^{2}\vert\scrF_{k}]+\frac{2}{m_{k}^{2}}\sum_{t=1}^{m_{k}}\sum_{l>t}\Ex[\inner{\eps_{t}(Z_{k}),\eps_{l}(Z_{k})}\vert\scrF_{k}]\\
&\leq \frac{\hat{\sigma}^{2}}{m_{k}}+\us{\frac{(m_{k}-1)}{m_{k}}\frac{\norm{B_{k}}^{2}}{m_k}} \leq \us{\frac{\hat{\sigma}^2 +  \hat{b}^2}{m_k}} \qquad \us{\Pr-\text{a.s.} }%=\frac{\hat{\sigma}^{2}-2\norm{B_{k}}^{2}}{m_{k}}+2\norm{B_{k}}^{2}\leq \frac{\hat{\sigma}^{2}}{m_{k}}+2\norm{B_{k}}^{2}. 
\end{align*}
%Since $\norm{B_{k}}^{2}\to 0$, 
\ms{Setting} $\cs^2 \eqdef \hat{\sigma}^{2}+\us{\hat{b}^2}$, we see that condition \eqref{e:sigma2} holds. A similar estimate holds for the random noise $\norm{W_{k}}^{2}$.
\end{example}

\begin{assumption}\label{ass:strongV}
$V:\setH\to\setH$ is $\mu$-strongly monotone ($\mu>0$), i.e. 
\begin{equation}
\inner{V(x)-V(y),x-y}\geq \mu\norm{x-y}^{2}\qquad\forall x,y\in\dom V=\setH.
\end{equation}
\end{assumption}
Combined with Assumption \ref{ass:exists}, strong monotonicity implies that $\setS=\{\bar{x}\}$ for some $\bar{x}\in\setH$.
%%%%%%%%%%%%%%%%%%%%%%%%%%%
\begin{remark} 
In the context of a structured operator $F=T+V$, the assumption that the single-valued part $V$ is strongly monotone can be done without loss of generality. Indeed, if instead $T$ is assumed to be $\mu$-strongly monotone, then $(V+\mu\Id)+(T-\mu\Id)$ is maximally monotone and Lipschitz continuous while $\tilde V \triangleq  V+\mu \Id$ may be seen to be $\mu$-strongly monotone operator. 
\end{remark}
Our first result establishes a ``perturbed linear convergence'' rate on the anchor sequence $\left(\norm{X_{k}-\bar{x}}^{2}\right)_{k\in\N}$, similar to the one derived in \cite[Corollary 3.2]{ComPes19} in the context of randomized fixed point iterations. %It is remarkable that we can achieve such a convergence rate even under possibly biased \ac{SO}, a point we will discuss further after the proof. 

\begin{theorem}[{\bf Perturbed linear convergence}] \label{th:linear1}
Consider \ac{RISFBF} with $X_{0}=X_{1}$. Suppose Assumptions~\ref{ass:exists}-\ref{ass:fdd}, \ms{Assumption \ref{ass:SObiased}} %\ref{ass:SObound}(ii) 
and Assumption \ref{ass:strongV} hold. Let $\setS=\{\bar{x}\}$ denotes the unique solution of \eqref{eq:MI}. Suppose $\lambda_k\equiv \lambda\le\min\left\{\tfrac{a}{2\mu},b\mu,\tfrac{1-a}{2\tilde{L}}\right\}$, where $0<a,b<1$, $\tilde{L}^2\triangleq L^2+\tfrac{1}{2}$, $\eta_k\equiv\eta\triangleq(1-b)\lambda\mu$. Define $\Delta M_{k}\eqdef 2\rho_{k}\norm{W_{k}}^{2}+\frac{(3-a)\rho_{k}\lambda_{k}^{2}}{1+\tilde{L}\lambda_{k}}\norm{\ce_{k}}^{2}$. Let $(\alpha_k)_{k\in\N}$ be a non-decreasing sequence such that $0<\alpha_k\le\bar{\alpha}<1$, and define $\rho_k\triangleq\tfrac{(3-a)(1-\alpha_k)^2}{2(2\alpha_k^2-0.5\alpha_k+1)(1+\tilde{L}\lambda)}$ for every $k \in \N$. Set
\begin{align}
&H_{k}\eqdef \norm{X_{k}-\bar{x}}^2+(1-\alpha_k)\left(\tfrac{3-a}{2\rho_k(1+\tilde{L}\lambda)}-1\right)\norm{X_{k}-X_{k-1}}^2-\alpha_k\norm{X_{k-1}-\bar{x}}^2,
\label{eq:Hk}\\
&c_{k}\eqdef \Ex[\Delta M_{k}\vert\scrF_{k}],\text{ and }\bar{c}_{k}\eqdef \sum_{i=1}^{k}q^{k-i}\Ex[c_{i}\vert\scrF_{1}],\nonumber
\end{align}
where $q=1-\rho\eta\in(0,1)$, $\rho=\frac{16(3-a)(1-\bar{\alpha})^{2}}{31(1+\tilde{L}\lambda)}$. Then the following hold: 
\begin{itemize}
\item[(i)]  $(\bar{c}_{k})_{k\in\N}\in\ell^{1}_{+}(\N)$.
\item[(ii)] For all $k\geq 1$ 
\begin{equation}\label{eq:LinearH}
\Ex[H_{k+1}\vert\scrF_{1}]\leq q^{k}H_{1}+\bar{c}_{k}.
\end{equation}
In particular, this implies a perturbed linear rate of the sequence $(\|X_{k}-\bar{x}\|^2)_{k\in\N}$ as 
\begin{equation}\label{eq:LinearX}
\Ex[\norm{X_{k+1}-\bar{x}}^{2}\vert\scrF_{0}]\leq q^{k}\left(\frac{2(1-\alpha_1)}{1-\bar{\alpha}}\norm{X_{1}-\bar{x}}^{2}\right)+\tfrac{2}{1-\bar{\alpha}}\bar{c}_{k}.
\end{equation}
\item[(iii)] $\sum_{k=1}^{\infty}(1-\alpha_k)\left(\tfrac{3-a}{2\rho_k(1+\tilde{L}\lambda)}-1\right)\norm{X_{k}-X_{k-1}}^2<\infty$ $\Pr\text{-a.s.}$.
\end{itemize}
\end{theorem}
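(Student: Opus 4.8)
The plan is to rerun the energy estimate of Lemma~\ref{lem:Recursion} for the reference point $p=\bar{x}$, but now \emph{retaining} the dissipation coming from strong monotonicity. Whereas the proof of Theorem~\ref{th:convergence} discarded the term $-2\rho_k\lambda\inner{V(Y_k)-V(\bar{x}),Y_k-\bar{x}}$ using only monotonicity, Assumption~\ref{ass:strongV} lets me bound it by $-2\rho_k\lambda\mu\norm{Y_k-\bar{x}}^2$. This strictly negative quantity is what promotes the quasi-Fej\'er inequality into a genuine one-step contraction $\Ex[H_{k+1}\mid\scrF_k]\le qH_k+c_k$ with $q=1-\rho\eta\in(0,1)$; claims (i)--(iii) are then harvested from it. Throughout, recall $\lambda_k\equiv\lambda$.

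Concretely, I follow the derivation of Lemma~\ref{lem:Recursion} from \eqref{eq:Rk} onward with $(p,p^{\ast})=(\bar{x},0)$, so that the bound on $\norm{X_{k+1}-\bar{x}}^2$ carries the two terms $-2\rho_k\lambda\inner{V(Y_k)-V(\bar{x}),Y_k-\bar{x}}$ and $\Delta N_k=2\rho_k\lambda\inner{W_k,\bar{x}-Y_k}$, and I treat them differently than before. (a) By Assumption~\ref{ass:strongV} the first is $\le-2\rho_k\lambda\mu\norm{Y_k-\bar{x}}^2$. (b) Because Assumption~\ref{ass:SObiased} controls only second moments and does \emph{not} grant conditional unbiasedness, $\Delta N_k$ is no longer a martingale increment; instead Young's inequality, with the parameter matched to $\Delta M_k$, gives $2\rho_k\lambda\inner{W_k,\bar{x}-Y_k}\le 2\rho_k\norm{W_k}^2+\tfrac{\rho_k\lambda^2}{2}\norm{Y_k-\bar{x}}^2$, whose first summand is the $2\rho_k\norm{W_k}^2$ term of the redefined $\Delta M_k$ and whose second is subtracted from (a). Since $\lambda\le b\mu$ gives $2\mu-\tfrac{\lambda}{2}>0$, this leaves a net dissipation $-\rho_k\lambda(2\mu-\tfrac{\lambda}{2})\norm{Y_k-\bar{x}}^2$.

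The decisive step is to feed this dissipation back into $H_k$. Using $\norm{Y_k-\bar{x}}^2\ge\tfrac12\norm{Z_k-\bar{x}}^2-\norm{Z_k-Y_k}^2$, the dissipation splits into a useful part $-\rho_k\lambda(\mu-\tfrac{\lambda}{4})\norm{Z_k-\bar{x}}^2$ and a parasitic part $+\rho_k\lambda(2\mu-\tfrac{\lambda}{2})\norm{Z_k-Y_k}^2$. The latter has coefficient at most $2\rho_k\lambda\mu\le a\rho_k$ by the cap $\lambda\le\tfrac{a}{2\mu}$, and is absorbed into the negative $\norm{Y_k-Z_k}^2$ mass of the recursion, which the modified constant $\tilde{L}^2=L^2+\tfrac12$ together with $\lambda\le\tfrac{1-a}{2\tilde{L}}$ keeps nonnegative after the transfer. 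The useful part satisfies $-\rho_k\lambda(\mu-\tfrac{\lambda}{4})\le-\rho_k(1-b)\lambda\mu=-\rho_k\eta$ since $\lambda\le b\mu$, giving $-\rho_k\eta\norm{Z_k-\bar{x}}^2$. Expanding $\norm{Z_k-\bar{x}}^2$ through the identity \eqref{eq:Z1} of Lemma~\ref{lem:ab} and converting $\norm{X_{k+1}-Z_k}^2$ into velocities via \eqref{eq:Xk1}, I collect the coefficients of $\norm{X_k-\bar{x}}^2$, $\norm{X_{k-1}-\bar{x}}^2$ and $\norm{X_k-X_{k-1}}^2$; with the prescribed $\rho_k$ and using $\rho_k\ge\rho$ these line up so that the right-hand side equals $qH_k+\Delta M_k$ plus sign-definite residual and velocity terms that may be dropped, and taking $\Ex[\cdot\mid\scrF_k]$ yields $\Ex[H_{k+1}\mid\scrF_k]\le qH_k+c_k$. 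I would also record, by the same Young/triangle computation behind $Q_k\ge0$ in Theorem~\ref{th:convergence}, the two-sided bound $0\le\tfrac{1-\bar{\alpha}}{2}\norm{X_k-\bar{x}}^2\le H_k$, which makes the contraction meaningful.

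Finally, the three claims follow. For (ii), iterating $\Ex[H_{k+1}\mid\scrF_1]\le q\,\Ex[H_k\mid\scrF_1]+\Ex[c_k\mid\scrF_1]$ gives $\Ex[H_{k+1}\mid\scrF_1]\le q^{k}H_1+\sum_{i=1}^{k}q^{k-i}\Ex[c_i\mid\scrF_1]=q^{k}H_1+\bar{c}_k$; since $X_0=X_1$ makes $H_1=(1-\alpha_1)\norm{X_1-\bar{x}}^2$, the lower bound $H_{k+1}\ge\tfrac{1-\bar{\alpha}}{2}\norm{X_{k+1}-\bar{x}}^2$ turns this into \eqref{eq:LinearX}. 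For (i), the variance bound \eqref{e:sigma2} together with $\norm{\ce_k}^2\le2\norm{W_k}^2+2\norm{U_k}^2$ and bounded $\rho_k,\lambda$ gives $c_k=\Ex[\Delta M_k\mid\scrF_k]\le C\cs^2/m_k$, and an Abel/Fubini interchange $\sum_k\bar{c}_k=\tfrac{1}{1-q}\sum_i\Ex[c_i\mid\scrF_1]<\infty$ using Assumption~\ref{ass:batch} proves $(\bar{c}_k)\in\ell^{1}_+(\N)$. For (iii), summing (ii) over $k$ and using $q<1$ and (i) gives $\sum_k\Ex[H_{k+1}\mid\scrF_1]<\infty$, whence by nonnegativity $\sum_k H_k<\infty$ $\Pr$-a.s.; the lower bound then yields $\sum_k\norm{X_k-\bar{x}}^2<\infty$, and combining the two bounds proves $\sum_k(1-\alpha_k)\bigl(\tfrac{3-a}{2\rho_k(1+\tilde{L}\lambda)}-1\bigr)\norm{X_k-X_{k-1}}^2<\infty$. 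The main obstacle is the decisive step: matching the leftover coefficients to \emph{exactly} $qH_k$ while keeping every dropped $\norm{Y_k-Z_k}^2$ and $\norm{X_k-X_{k-1}}^2$ coefficient of the correct sign forces the four constraints ($\tilde{L}^2=L^2+\tfrac12$, the three-way cap on $\lambda$, and the exact forms of $\rho_k$ and $\eta$) to interlock, and verifying that this constant bookkeeping closes is the delicate heart of the argument.
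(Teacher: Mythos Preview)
Your proposal is correct and follows essentially the same route as the paper: both rederive the recursion from \eqref{eq:MT}/\eqref{eq:Rk} with $p=\bar{x}$, extract $-2\lambda\mu\norm{Y_k-\bar{x}}^2$ from strong monotonicity, absorb the biased noise term $2\lambda\inner{W_k,\bar{x}-Y_k}$ via Young's inequality (producing the $2\rho_k\norm{W_k}^2$ in the redefined $\Delta M_k$), convert $\norm{Y_k-\bar{x}}^2$ to $\norm{Z_k-\bar{x}}^2$ by the triangle inequality, absorb the resulting parasitic $\norm{Z_k-Y_k}^2$ via the shifted constant $\tilde{L}^2=L^2+\tfrac12$, and then expand through \eqref{eq:Z1}--\eqref{eq:Xk1} to obtain $H_{k+1}\le(1-\rho_k\eta)H_k+\Delta M_k$ up to sign-definite residuals. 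The paper carries out precisely the ``constant bookkeeping'' you flag as delicate: it verifies explicitly that $\rho_k\eta<1$ (via $\rho_k\le\tfrac{1-\alpha_k}{(1+4\alpha_k)\eta}$) and that the leftover velocity coefficient $\tilde I\ge 0$ for the prescribed $\rho_k$, and it proves the lower bound $H_k\ge\tfrac{1-\bar\alpha}{2}\norm{X_k-\bar{x}}^2$ by the same Young/triangle device you cite---so your outline would close once those two sign checks are written out.
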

%%%%%%%%%%%%%%%%%%%%%
\begin{proof}
Our point of departure for the analysis under the stronger Assumption \ref{ass:strongV} is eq. \eqref{eq:MT}, which becomes 
\begin{align*}
    \inner{Z_{k}-R_{k},Y_{k}-p}\geq \lambda_{k}\inner{W_{k}+p^{\ast},Y_{k}-p}+\lambda_{k}\mu\norm{Y_{k}-p}^{2}\quad\forall (p,p^{\ast})\in\gr(F). 
\end{align*}
Repeating the analysis of the previous section with reference point $p=\bar{x}$ and $p^{\ast}=0$, the unique solution of \eqref{eq:MI}, yields the bound 
\begin{align*}
\norm{R_{k}-\bar{x}}^{2} & \leq \norm{Z_{k}-\bar{x}}^{2}-(1-2L^{2}\lambda_{k}^{2})\norm{Y_{k}-Z_{k}}^{2}+2\lambda^{2}_{k}\norm{\ce_{k}}^{2}\\
&+2\lambda_{k}\inner{W_{k},\bar{x}-Y_{k}}-2\lambda_{k}\mu\norm{\bar{x}-Y_{k}}^{2}.
\end{align*}
The triangle inequality $\norm{Z_{k}-\bar{x}}^{2}\leq 2\norm{Y_{k}-Z_{k}}^{2}+2\norm{Y_{k}-\bar{x}}^{2}$ gives 
\begin{align*}
\norm{R_{k}-\bar{x}}^{2}& \leq \norm{Z_{k}-\bar{x}}^{2}-(1-2L^{2}\lambda_{k}^{2})\norm{Y_{k}-Z_{k}}^{2}+2\lambda^{2}_{k}\norm{\ce_{k}}^{2}+2\lambda_{k}\inner{W_{k},\bar{x}-Y_{k}}\\
&+2\lambda_{k}\mu\norm{Y_{k}-Z_{k}}^{2}-\lambda_{k}\mu\norm{Z_{k}-\bar{x}}^{2}.
\end{align*}
By Young's inequality, we have for all $c>0$,
\begin{align} 
    \notag
\inner{W_{k},\bar{x}-Y_{k}}&\leq \frac{1}{2c}\norm{W_{k}}^{2}+\frac{c}{2}\norm{Y_{k}-\bar{x}}^{2}\\
\label{bias-weak} &\leq \frac{1}{2c}\norm{W_{k}}^{2}+c\left(\norm{Z_{k}-\bar{x}}^{2}+\norm{Z_{k}-Y_{k}}^{2}\right).
\end{align}
Observe that this estimate is crucial in weakening the requirement of conditional unbiasedness. Choose $c=\frac{\lambda_{k}}{2}$ to get 
\begin{align*}
\norm{R_{k}-\bar{x}}^{2}& \leq \norm{Z_{k}-\bar{x}}^{2}-(1-2L^{2}\lambda_{k}^{2})\norm{Y_{k}-Z_{k}}^{2}+2\lambda^{2}_{k}\norm{\ce_{k}}^{2}+2\norm{W_{k}}^{2}+\lambda^{2}_{k}\norm{\bar{x}-Z_{k}}^{2}\\
&+2\lambda_{k}\mu\norm{Y_{k}-Z_{k}}^{2}-\lambda_{k}\mu\norm{Z_{k}-\bar{x}}^{2}+\lambda_{k}^{2}\norm{Z_{k}-Y_{k}}^{2}\\
&=(1+\lambda_{k}^{2}-\lambda_{k}\mu)\norm{Z_{k}-\bar{x}}^{2}+2\lambda^{2}_{k}\norm{\ce_{k}}^{2}+2\norm{W_{k}}^{2}\\
&-(1-2L^{2}\lambda^{2}_{k}-2\lambda_{k}\mu-\lambda^{2}_{k})\norm{Y_{k}-Z_{k}}^{2}.
\end{align*}
Assume that $\lambda_{k}\mu\leq \frac{a}{2}<1$. Then, 
\[
1-2L^{2}\lambda^{2}_{k}-2\lambda_{k}\mu-\lambda_{k}^{2}\geq (1-a)-2L^{2}\lambda_{k}^{2}-\lambda_{k}^{2}=(1-a)-2\tilde{L}^{2}\lambda^{2}_{k},
\]
where $\tilde{L}^2\eqdef L^2+1/2$. Moreover, choosing $\lambda_{k}\leq b\mu$, we see 
\[
1+\lambda^{2}_{k}-\lambda_{k}\mu\leq 1-(1-b)\lambda_k\mu.
\]
Using these bounds, we readily deduce for $0<\lambda_{k}\leq\min\{\frac{a}{2\mu},b\mu\}$, that 
\begin{align}
\notag
\norm{R_{k}-\bar{x}}^{2}&\leq \left(1-(1-b)\lambda_k\mu\right)\norm{Z_{k}-\bar{x}}^{2}-\left((1-a)-2\tilde{L}^{2}\lambda^{2}_{k}\right)\norm{Y_{k}-Z_{k}}^{2} \\
&+2\lambda^{2}_{k}\norm{\ce_{k}}^{2}+2\norm{W_{k}}^{2}.
\label{eq:Rnew}
\end{align}
Proceeding as in the derivation of eq. \eqref{bd-eq2}, one sees first that
\[
\frac{1}{2\rho_{k}^{2}}\norm{X_{k+1}-Z_{k}}^{2}\leq (1+\tilde{L}\lambda_{k})^{2}\norm{Y_{k}-Z_{k}}^{2}+\lambda^{2}_{k}\norm{\ce_{k}}^{2},
\]
and therefore, 
\begin{align}\notag
-\rho_{k}((1-a)-2\tilde{L}^{2}\lambda^{2}_{k})\norm{Y_{k}-Z_{k}}^{2}& \leq -\frac{(1-a)-2\tilde{L}\lambda_{k}}{2\rho_{k}(1+\tilde{L}\lambda_{k})}\norm{X_{k+1}-Z_{k}}^{2}\\
	& +\frac{\rho_{k}\lambda^{2}_{k}((1-a)-2\tilde{L}\lambda_{k})}{1+\tilde{L}\lambda_{k}}\norm{\ce_{k}}^{2}.
\label{bd-eq3}
\end{align}
Define $\eta_{k}=(1-b)\lambda_k\mu$. Using the equality \eqref{bd-eq1},
\begin{align*}
\norm{X_{k+1}&-\bar{x}}^{2}=(1-\rho_{k})\norm{Z_{k}-\bar{x}}^{2}+\rho_{k}\norm{R_{k}-\bar{x}}^{2}-\tfrac{1-\rho_{k}}{\rho_{k}}\norm{X_{k+1}-Z_{k}}^{2}\\
&\overset{\eqref{eq:Rnew}}{\leq} (1-\rho_k\eta_k)\norm{Z_{k}-\bar{x}}^{2}-\tfrac{1-\rho_{k}}{\rho_{k}}\norm{X_{k+1}-Z_{k}}^{2}-\rho_{k}((1-a)-2\tilde{L}^{2}\lambda^{2}_{k})\norm{Z_{k}-Y_{k}}^{2}\\
&+2\lambda^{2}_k\rho_{k}\norm{\ce_{k}}^{2}+2\rho_k\norm{W_{k}}^2\\
&\overset{\eqref{bd-eq3}}{\leq} (1-\rho_k\eta_k)\norm{Z_{k}-\bar{x}}^{2}-\tfrac{(3-a)-2\rho_{k}(1+\tilde{L}\lambda_{k})}{2\rho_{k}(1+\tilde{L}\lambda_{k})}\norm{X_{k+1}-Z_{k}}^{2}+2\rho_{k}\norm{W_{k}}^{2}+\tfrac{(3-a)\rho_{k}\lambda_{k}^{2}}{1+\tilde{L}\lambda_{k}} \norm{\ce_{k}}^{2} \\
&\overset{\eqref{eq:Z1},\eqref{eq:Xk1}}{\leq} (1-\rho_k\eta_k)[(1+\alpha_{k})\norm{X_{k}-\bar{x}}^{2}-\alpha_{k}\norm{X_{k-1}-\bar{x}}^{2}+\alpha_{k}(1+\alpha_{k})\norm{X_{k}-X_{k-1}}^{2}] \\
&-\tfrac{(3-a)-2\rho_{k}(1+\tilde{L}\lambda_{k})}{2\rho_{k}(1+\tilde{L}\lambda_{k})}[(1-\alpha_{k})\norm{X_{k+1}-X_{k}}^{2}+(\alpha^{2}_{k}-\alpha_{k})\norm{X_{k}-X_{k-1}}^{2}] \\
&+2\rho_{k}\norm{W_{k}}^{2}+\tfrac{(3-a)\rho_{k}\lambda_{k}^{2}}{(1+\tilde{L}\lambda_{k})} \norm{\ce_{k}}^{2} \\
&\leq(1+\alpha_k)(1-\rho_k\eta_k)\|X_k-\bar{x}\|^2-\alpha_k(1-\rho_k\eta_k)\|X_{k-1}-\bar{x}\|^2+\Delta M_{k} \\
&+\alpha_k\norm{X_k-X_{k-1}}^2\left[(1+\alpha_k)(1-\rho_k\eta_k)+(\alpha_k-1)+\tfrac{(3-a)(1-\alpha_k)}{2\rho_k(1+\tilde{L}\lambda_k)}\right] \\
&-(1-\alpha_k)\left(\tfrac{3-a}{2\rho_k(1+\tilde{L}\lambda_k)}-1\right)\norm{X_{k+1}-X_k}^2,  
\end{align*}
with stochastic error term $\Delta M_{k}\eqdef 2\rho_{k}\norm{W_{k}}^{2}+\frac{(3-a)\rho_{k}\lambda_{k}^{2}}{1+\tilde{L}\lambda_{k}}\norm{\ce_{k}}^{2}$. From here, it follows that
\begin{align}
\notag &\norm{X_{k+1}-\bar{x}}^2+(1-\alpha_k)\left(\tfrac{3-a}{2\rho_k(1+\tilde{L}\lambda_k)}-1\right)\norm{X_{k+1}-X_k}^2-\alpha_k\|X_k-\bar{x}\|^2 \\
\notag &\le (1-\rho_k\eta_k)\left[\norm{X_{k}-\bar{x}}^2+(1-\alpha_k)\left(\tfrac{3-a}{2\rho_k(1+\tilde{L}\lambda_k)}-1\right)\norm{X_{k}-X_{k-1}}^2-\alpha_k\|X_{k-1}-\bar{x}\|^2\right] \\
\notag &-\left[(1-\rho_k\eta_k)(1-\alpha_k)\left(\tfrac{3-a}{2\rho_k(1+\tilde{L}\lambda_k)}-1\right) \right. \\
\notag &\left.-\alpha_k\left((1+\alpha_k)(1-\rho_k\eta_k)+(\alpha_k-1)+\tfrac{(3-a)(1-\alpha_k)}{2\rho_k(1+\tilde{L}\lambda_k)}\right) \right] \norm{X_{k}-X_{k-1}}^2 \\
\notag &-\alpha_k\rho_k\eta_k\norm{X_{k}-\bar{x}}^2+\Delta M_{k} \\
\notag &= (1-\rho_k\eta_k)\left[\norm{X_{k}-\bar{x}}^2+(1-\alpha_k)\left(\tfrac{3-a}{2\rho_k(1+\tilde{L}\lambda_k)}-1\right)\norm{X_{k}-X_{k-1}}^2-\alpha_k\|X_{k-1}-\bar{x}\|^2\right] \\
&-\underbrace{\left[(1-\alpha_k-\rho_k\eta_k)\left(\tfrac{(3-a)(1-\alpha_k)}{2\rho_k(1+\tilde{L}\lambda_k)}-1 \right)-\alpha_k^2(2-\rho_k\eta_k) \right]}_{\eqdef \tilde I}\norm{X_{k}-X_{k-1}}^2-\alpha_k\rho_k\eta_k\norm{X_{k}-\bar{x}}^2+\Delta M_{k}. \label{rec-Hk}
\end{align}
Since $\lambda_k = \lambda$, and $\rho_k=\tfrac{(3-a)(1-\alpha_k)^2}{2(2\alpha_k^2-0.5\alpha_k+1)(1+\tilde{L}\lambda)}$, we claim that $\rho_k \le \tfrac{1-\alpha_k}{(1+4\alpha_k)\eta}$ for $\eta\equiv(1-b)\lambda\mu$. Indeed,\footnote{To wit, the function $x\mapsto 2x^{2}-0.5x+1$ is attains a global minumum at $x=1/8$, which gives the global lower bound $31/32$. Furthermore, the function $x\mapsto (1-x)(1+4x)$ attains a global maximum at $x=3/8$, with corresponding value $25/16$.}
\begin{align*}
\tfrac{\tfrac{1-\alpha_k}{(1+4\alpha_k)\eta}}{\rho_k}=\tfrac{2(2\alpha_k^2-0.5\alpha_k+1)(1+\tilde{L}\lambda)}{(3-a)(1-\alpha_k)(1+4\alpha_k)\eta} \ge \tfrac{2(2\alpha_k^2-0.5\alpha_k+1)(1+\tilde{L}\lambda)}{(3-a)(1-\alpha_k)(1+4\alpha_k)\tfrac{a}{2}(1-b)} \ge \tfrac{2\cdot \tfrac{31}{32}\cdot 1}{\tfrac{25}{16}\cdot 1} =\frac{31}{25}> 1.
\end{align*} 
In particular, this implies $\eta\rho_{k}\in(0,1)$ for all $k\in\N$. We then have
\begin{align}
    \notag
   \tilde I&=(1-\alpha_k-\rho_k\eta)\left(\tfrac{(3-a)(1-\alpha_k)}{2\rho_k(1+\tilde{L}\lambda)}-1 \right)-\alpha_k^2(2-\rho_k\eta) \ge\left(1-\alpha_k-\tfrac{1-\alpha_k}{1+4\alpha_k}\right)\left(\tfrac{2\alpha_k^2-0.5\alpha_k+1}{1-\alpha_k}-1 \right)-2\alpha_k^2 \\
\notag&=\tfrac{(1-\alpha_k)4\alpha_k}{1+4\alpha_k}\cdot\tfrac{2\alpha_k^2+0.5\alpha_k}{1-\alpha_k}-\tfrac{2\alpha_k^2(1+4\alpha_k)}{1+4\alpha_k}\\
&=0. \label{def-I}
\end{align}
Next, we show that $H_k\ge \tfrac{1-\bar{\alpha}}{2}\|X_k-\bar{x}\|^2$, for $H_{k}$ defined in \eqref{eq:Hk}. This can be seen from the next string of inequalities:
\begin{align*}
H_{k}&=\norm{X_{k}-\bar{x}}^{2}-\alpha_k\norm{X_{k-1}-\bar{x}}^{2}+(1-\alpha_k)\left(\frac{3-a}{2\rho_k(1+\tilde{L}\lambda)}-1\right)\|X_k-X_{k-1}\|^2\\
&\geq \norm{X_{k}-\bar{x}}^{2}+\left(\frac{(1-\alpha_k)(2\alpha_k^{2}+1-0.5\alpha_k)}{(1-\alpha_k)^{2}}-1+\alpha_k\right)\norm{X_{k}-X_{k-1}}^{2}-\alpha_k\norm{X_{k-1}-\bar{x}}^{2}\\
&\ge\norm{X_{k}-\bar{x}}^{2}+\left(\frac{(1-\alpha_k)(2\alpha_k^{2}+1-\alpha_k)}{(1-\alpha_k)^{2}}-1+\alpha_k\right)\norm{X_{k}-X_{k-1}}^{2}-\alpha_k\norm{X_{k-1}-\bar{x}}^{2}\\
&=\left(\alpha_k+\frac{1-\alpha_k}{2}\right)\norm{X_{k}-\bar{x}}^{2}+\left(\alpha_k+\frac{2\alpha_k^{2}}{1-\alpha_k}\right)\norm{X_{k}-X_{k-1}}^{2}-\alpha_k\norm{X_{k-1}-\bar{x}}^{2}+\frac{1-\alpha_k}{2}\norm{X_{k}-\bar{x}}^{2}\\
&\geq \alpha_k\left(\norm{X_{k}-\bar{x}}^{2}+\norm{X_{k}-X_{k-1}}^{2}\right)-\alpha_k\norm{X_{k-1}-\bar{x}}^{2}+2\alpha_k\norm{X_{k}-\bar{x}}\cdot\norm{X_{k}-X_{k-1}}+\frac{1-\alpha_k}{2}\norm{X_{k}-\bar{x}}^{2}\\
&\geq \alpha_k\left(\norm{X_{k}-\bar{x}}+\norm{X_{k}-X_{k-1}}\right)^{2}-\alpha_k\norm{X_{k-1}-\bar{x}}^{2}+\frac{1-\alpha_k}{2}\norm{X_{k}-\bar{x}}^{2} \geq \frac{1-\alpha_k}{2}\norm{X_{k}-\bar{x}}^{2}\\
& \ge \frac{1-\bar{\alpha}}{2}\norm{X_{k}-\bar{x}}^{2}.
\end{align*}
In this derivation we have used the Young inequality$\frac{1-\alpha_k}{2}\norm{X_{k}-\bar{x}}^{2}+\frac{2\alpha_k^{2}}{1-\alpha_k}\norm{X_{k}-X_{k-1}}^{2}\geq 2\alpha_k\norm{X_{k}-\bar{x}}\cdot\norm{X_{k}-X_{k-1}},$ and the specific choice $\rho_k = \frac{(3-a)(1-\alpha_k)^{2}}{2(2\alpha_k^{2}-\frac{1}{2}\alpha_k+1)(1+\tilde{L}\lambda)}$.

By recalling \eqref{rec-Hk} and invoking \eqref{def-I}, we are left with the stochastic recursion
\begin{equation}\label{eq:Hk2}
    H_{k+1}\leq q_k  H_{k}-\tilde b_{k}+\Delta M_{k}.
\end{equation}
where $q_k \eqdef 1-\rho_k \eta$ and $\tilde{b}_k \eqdef \alpha_k \rho_k \eta_k\|X_k-\bar{x}\|^2.$ Since $\rho_k = \frac{(3-a)(1-\alpha_k)^{2}}{2(2\alpha_k^{2}-\frac{1}{2}\alpha_k+1)(1+\tilde{L}\lambda)} \ge \rho =\frac{16(3-a)(1-\bar{\alpha})^{2}}{31(1+\tilde{L}\lambda)}$ for every $k$, we have that $q_k \le q =1-\eta \rho$ for every $k$. Furthermore, $1>\eta\rho_{k}\geq\eta\rho$, so that $q \in (0,1)$. Taking conditional expectations on both sides on \eqref{eq:Hk2}, we get 
\begin{align*}
    \Ex[H_{k+1}\vert \scrF_{k}]+\tilde b_{k}\leq q H_{k}+c_{k}\quad\Pr\text{-a.s.}
\end{align*}
using the notation $c_{k}\eqdef\Ex[\Delta M_{k}\vert\scrF_{k}]$. Applying the operator $\Ex[\cdot\vert\scrF_{k-1}]$ and using the tower property of conditional expectations, this gives 
\[
\Ex[H_{k+1}\vert\scrF_{k-1}]\leq q^{2}H_{k-1}-q\Ex[\tilde{b}_{k-1}\vert\scrF_{k-1}]-\Ex[\tilde{b}_{k}\vert\scrF_{k-1}]+q\Ex[c_{k-1}\vert\scrF_{k-1}]+\Ex[c_{k}\vert\scrF_{k-1}].
\]
Proceeding inductively, we see that
\[
\Ex[H_{k+1}\vert\scrF_{1}]\leq q^{k}H_{1}+\sum_{i=1}^{k-1}q^{k-i}\Ex[c_{i}\vert\scrF_{1}]=q^{k}H_{1}+\bar{c}_{k}.
\]
This establishes eq. \eqref{eq:LinearH}. To validate eq. \eqref{eq:LinearX}, recall that we assume $X_{1}=X_{0}$, so that $H_{1}=(1-\alpha_{1})\norm{X_{1}-\bar{x}}^{2}$. Furthermore, $H_{k+1}\geq\frac{1-\bar{\alpha}}{2}\norm{X_{k+1}-\bar{x}}^{2}$, so that 
\[
\Ex[\norm{X_{k+1}-\bar{x}}^{2}\vert\scrF_{1}]\leq q^{k}\left(\frac{2(1-\alpha_{1})}{1-\bar{\alpha}}\norm{X_{1}-\bar{x}}^{2}\right)+\frac{2}{1-\bar{\alpha}}\bar{c}_{k}.
\]
We now show that $(\bar{c}_{k})_{k\in\N}\in\ell^{1}_{+}(\N)$. Simple algebra, \ms{combined with Assumption \ref{ass:SObiased}}, gives 
\begin{align}
 c_{k}&=\Ex[\Delta M_{k}\vert\scrF_{k}]\leq 2\rho_{k}\left(1+\frac{(3-a)\lambda^{2}}{1+\tilde{L}\lambda}\right)\Ex[\norm{W_{k}}^{2}\vert\scrF_{k}]+\frac{2(3-a)\rho_k\lambda^{2}}{1+\tilde{L}\lambda}\Ex[\norm{U_{k}}^{2}\vert\scrF_{1}]\nonumber\\
&\leq \frac{2\cs^{2}\rho_{k}}{m_{k}}\left(1+\frac{2(3-a)\lambda^{2}}{1+\tilde{L}\lambda}\right)\equiv\frac{\rho_{k}\cs^{2}}{m_{k}}\kappa.
\label{eq:boundc}
\end{align}
Hence, since $(\rho_{k})_{k\in\N}$ is bounded, Assumption \ref{ass:batch} gives $\lim_{k\to\infty}c_{k}=0$ a.s. Using again the tower property, we see $\Ex[c_{k}\vert\scrF_{1}]=\Ex\left[\Ex(c_{k}\vert\scrF_{k})\vert\scrF_{1}\right]\leq \kappa\frac{\rho_{k}\cs^{2}}{m_{k}} \le \kappa\frac{\bar{\rho}\cs^{2}}{m_{k}}$, where $\rho_k = \frac{(3-a)(1-\alpha_k)^{2}}{2(2\alpha_k^{2}-\frac{1}{2}\alpha_k+1)(1+\tilde{L}\lambda)} \le \bar{\rho} =\frac{3-a}{2(1+\tilde{L}\lambda)} $ for every $k$. Consequently, the discrete convolution $\left(\sum_{i=1}^{k-1}q^{k-i}\Ex[c_{i}\vert\scrF_{1}]\right)_{k\in\N}$ is summable. Therefore $\sum_{k\geq 1}\Ex[H_{k}]<\infty$ and $\sum_{k\geq 1}\Ex[\tilde{b}_{k}]<\infty$. Clearly, this implies $\lim_{k\to\infty}\Ex[\tilde b_{k}]=0,$ and consequently the subsequently stated two implication follow as well: 
\begin{align*}
&\lim_{k\to\infty}\norm{X_{k}-\bar{x}}=0\quad\Pr\text{-a.s.},\quad\text{ and }\\
&\sum_{k=1}^{\infty}(1-\alpha_k)\left(\tfrac{3-a}{2\rho_k(1+\tilde{L}\lambda)}-1\right)\norm{X_{k}-X_{k-1}}^2<\infty\quad\Pr\text{-a.s.}.
\end{align*}
\qed
\end{proof}
%%%%%%%%%%%%%%
\begin{remark}
    It is worth remarking that the above proof does not rely on unbiasedness of the random estimators. The reason why we can lift this rather typical assumption lies in our application Young's inequality in the estimate \eqref{bias-weak}. The only assumption needed is a summable oracle variance as formulated in Assumption \ref{ass:SObiased} to get the above result working. 
\end{remark}
%%%%%%%%%%
\begin{remark}
    The above result illustrates again nicely the well-known trade-off between relaxation and inertial effects (cf. Remark~\ref{rem:1}). 
    Indeed, up to constant factors, the coupling between inertia and relaxation is expressed by the function $\alpha\mapsto \frac{(1-\alpha)^{2}}{2\alpha^{2}-\frac{1}{2}\alpha+1}$. Basic calculus reveals that this function is decreasing for $\alpha$ increasing. In the extreme case when $\alpha\uparrow 1$, it is necessary to let $\rho\downarrow 0$, and vice versa. When $\alpha\to 0$ then the limiting value of our specific relaxation policy is $\frac{3-a}{1+\tilde{L}\lambda}$. In practical applications, it is advisable to choose $b$ small in order to make $q$ large. The value $a$ must be calibrated in a disciplined way in order to allow for a sufficiently large step size $\lambda$. This requires some knowledge of the condition number of the problem $\mu/L$. As a heuristic argument, a good strategy, anticipating that $b$ should be close to $0$, is to set $\frac{a}{2\mu}=\frac{1-a}{2\tilde{L}}$. This means $a=\frac{\mu}{\tilde{L}+\mu}$. 
\end{remark}
%%%%%%%%%%%%%%%%%%%%
We obtain a full linear rate of convergence when a more aggressive sample rate is employed in the \ac{SO}. We achieve such global linear rates, together with tuneable iteration and oracle complexity estimates in two settings: First, we consider an aggressive simulation strategy, where the sample size grows over time geometrically. Such a sampling frequency can be quite demanding in some applications. As an alternative, we then move on and consider a more modest simulation strategy under which only polynomial growth of the batch size is required. Whatever simulation strategy is adopted, key to the assessment of the iteration and oracle complexity is to bound the stopping time 
\begin{equation}\label{eq:K}
K_{\epsilon} \eqdef \inf\{ k \in\N\vert\; \Ex\left(\norm{X_{k+1} - \bar{x}}^2\right) \leq \epsilon \}.
\end{equation}
In order to understand the definition of this stopping time, recall that \ac{RISFBF} computes the last iterate $X_{K+1}$ by extrapolating between the current base point $Z_{k}$ and the correction step involving $Y_{k}+\lambda_{K}(A_{k}-B_{k})$, which requires $2 m_{k}$ iid realizations from the law $\measP$. In total, when executing the algorithm until the terminal time $K_{\epsilon}$, where therefore need to simulate $2\sum_{k=1}^{K_{\epsilon}}m_{k}$ random variables. We now estimate the integer $K_{\epsilon}$ under a geometric sampling strategy.

\begin{proposition} [Non-asymptotic linear convergence under geometric sampling]
\label{prop:rate_geom}
Suppose the conditions of Theorem \ref{th:linear1} hold. Let $p\in(0,1),\cB= 2\bar{\rho}\cs^{2}\left(1+\frac{2(3-a)\lambda^{2}}{1+\tilde{L}\lambda}\right),$ and choose the sampling rate $m_{k}=\lfloor p^{-k}\rfloor$. Let $\hat{p}\in(p,1)$, and define 
\begin{align}\label{eq:cpq}
    C(p,q)&\eqdef \ic{\tfrac{2(1-\alpha_1)}{1-\bar{\alpha}}}\norm{X_{1}-\bar{x}}^{2}+\tfrac{4\cB}{\uss{(1-\bar{\alpha})}(1-\min\{p/q,q/p\})}\quad \text{if }p\neq q,\text{ and }\\
    \hat{C}&\eqdef \ic{\tfrac{2(1-\alpha_1)}{1-\bar{\alpha}}}\norm{X_{1}-\bar{x}}^{2}+\tfrac{4\cB}{\uss{(1-\bar{\alpha})}\exp(1)\ln(\hat{p}/q)}\quad \text{if }p=q.
\label{eq:hatc}
\end{align}
Then, whenever $p\neq q$, we see that 
\[
\Ex\left(\norm{X_{k+1}-\bar{x}}^{2}\right)\leq C(p,q)\max\{p,q\}^{k},
\]
and whenever $p=q$, 
\[
\Ex\left(\norm{X_{k+1}-\bar{x}}^{2}\right)\leq \hat{C}\hat{p}^{k}.
\]
In particular, the stochastic process $(X_{k})_{k\in\N}$ converges strongly and $\Pr$-a.s. to the unique solution $\bar{x}$ at a linear rate.
\end{proposition}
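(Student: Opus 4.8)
The plan is to instantiate the perturbed linear bound \eqref{eq:LinearX} of Theorem \ref{th:linear1} with the geometric schedule $m_{k}=\lfloor p^{-k}\rfloor$ and then to estimate the perturbation term $\bar{c}_{k}$ explicitly as a convolution of two geometric sequences. First I would check that $m_{k}=\lfloor p^{-k}\rfloor$ is admissible: it is non-decreasing and $\sum_{k}1/m_{k}<\infty$, so Assumption \ref{ass:batch} holds and Theorem \ref{th:linear1} applies, delivering both \eqref{eq:LinearX} and (after taking total expectations, noting that $X_{0}=X_{1}$ is a deterministic input) the unconditional version stated here. The elementary estimate $\lfloor x\rfloor\ge x/2$ for $x\ge 1$, applied to $x=p^{-k}\ge p^{-1}>1$, gives $m_{k}\ge\tfrac12 p^{-k}$, i.e. $1/m_{k}\le 2p^{k}$.

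Next, combining the per-step bound \eqref{eq:boundc} with the definition of $\cB$ yields $\Ex[c_{i}\mid\scrF_{1}]\le\cB/m_{i}\le 2\cB p^{i}$, whence
\[
\bar{c}_{k}=\sum_{i=1}^{k}q^{k-i}\,\Ex[c_{i}\mid\scrF_{1}]\le 2\cB\sum_{i=1}^{k}q^{k-i}p^{i}\eqdef 2\cB\, S_{k}.
\]
The whole argument therefore reduces to bounding $S_{k}$, and here I would split into two regimes. If $p\neq q$, factoring out the larger base gives $S_{k}=\max\{p,q\}^{k}\sum_{j=0}^{k-1}(\min\{p/q,q/p\})^{j}\le\frac{\max\{p,q\}^{k}}{1-\min\{p/q,q/p\}}$; inserting this into \eqref{eq:LinearX} and bounding $q^{k}\le\max\{p,q\}^{k}$ produces the advertised rate with the constant $C(p,q)$ from \eqref{eq:cpq}. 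If $p=q$, every summand equals $q^{k}$, so $S_{k}=kq^{k}=kp^{k}$, and to transfer the polynomial prefactor onto a slightly larger base $\hat{p}\in(p,1)$ I would use the one-variable maximization $\sup_{x\ge 0}x\,t^{x}=\tfrac{1}{e\ln(1/t)}$ with $t=p/\hat{p}$, which yields $kp^{k}\le\frac{\hat{p}^{k}}{e\ln(\hat{p}/p)}$; substituting into \eqref{eq:LinearX} gives the bound with $\hat{C}$ from \eqref{eq:hatc}. Strong $L^{2}$ convergence at a linear rate is then immediate from these two estimates, while the $\Pr$-a.s. convergence to $\bar{x}$ is inherited directly from Theorem \ref{th:linear1}, whose hypotheses we have just verified.

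The routine parts are the substitutions and the geometric-series summation; the one place that needs genuine care is the $p=q$ case, where the factor $k$ in $S_{k}=kp^{k}$ must be absorbed into $\hat{p}^{k}$. The calculus optimum $\sup_{x\ge 0}x\,t^{x}=1/(e\ln(1/t))$ is exactly what generates the $\exp(1)$ and $\ln(\hat{p}/q)$ appearing in $\hat{C}$, so pinning down that constant is the main (though still elementary) obstacle; everything else is bookkeeping of the constants $\cB$, $\bar{\alpha}$, and $\alpha_{1}$.
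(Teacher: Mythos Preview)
Your proposal is correct and follows essentially the same route as the paper: bound $1/m_{k}\le 2p^{k}$, use \eqref{eq:boundc} to control $c_{k}$, insert into the iterated recursion (your use of \eqref{eq:LinearX} is equivalent to the paper re-iterating from \eqref{eq:Hk2}), and evaluate the geometric convolution $\sum_{i=1}^{k}q^{k-i}p^{i}$ in the two regimes $p\neq q$ and $p=q$. Your calculus bound $\sup_{x\ge 0}x\,t^{x}=1/(e\ln(1/t))$ is exactly the content of the paper's Lemma~\ref{lem:geometric}, so the treatment of the $p=q$ case coincides as well.
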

\begin{proof}
Departing from \eqref{eq:Hk2}, ignoring the positive term $\tilde{b}_{k}$ from the right-hand side, and taking expectations on both sides leads to
\begin{align}\label{cond-exp}
  \frac{1-\ic{\bar{\alpha}}}{2}\Ex(\norm{X_{k+1}-\bar{x}}^{2})\leq h_{k+1}\eqdef \Ex(H_{k+1})\leq q\Ex(H_{k})+c_{k}=q h_{k}+c_{k},
\end{align}
where the equality follows from $c_k$ being deterministic. \ms{The sequence $(c_{k})_{k\in\N}$ is further upper bounded by the following considerations: First, the relaxation sequence is bounded by $\rho_{k}\leq\bar{\rho}=\frac{3-a}{2(1+\tilde{L}\lambda)}$; Second, the sample rate is bounded by $m_{k}= \lfloor p^{-k} \rfloor \ge \left\lceil \tfrac{1}{2}p^{-k} \right\rceil \ge \tfrac{1}{2}p^{-k}$. Using these facts, eq. \eqref{eq:boundc} yields
\begin{equation}\label{eq:boundc2}
c_{k}\leq \frac{\rho_{k}\cs^{2}\kappa}{m_{k}}\leq 2\cB p^{k}\qquad\forall k\geq 1,
\end{equation}
where $\cB=2\bar{\rho}\cs^{2}\left(1+\frac{2(3-a)\lambda^{2}}{1+\tilde{L}\lambda}\right)$.} Iterating the recursion above, one readily sees that 
\begin{equation}\label{eq:h}
h_{k+1}\leq q^{k}h_{1}+\sum_{i=1}^{k}q^{k-i}c_{i}\quad \forall k\geq 1.
\end{equation}
Consequently, by recalling that $h_1 = (1-\alpha_1)\|X_1-\bar{x}\|^2$ and $h_{k}\geq\frac{1-\bar{\alpha}}{2}\Ex(\norm{X_{k}-\bar{x}}^{2})$, the bound \eqref{eq:boundc2} allows us to derive the recursion
\begin{equation}\label{eq:geometric}
\Ex\left(\norm{X_{k+1}-\bar{x}}^{2}\right)\leq q^{k}\left(\frac{2(1-\alpha_1)}{1-\bar{\alpha}}\norm{X_{1}-\bar{x}}^{2}\right)+\frac{4\cB}{1-\bar{\alpha}}\sum_{i=1}^{k}q^{k-i}p^{i}.
\end{equation}
%
%We bound the sample rate as $m_{k}= \lfloor \tilde{p}^{-k} \rfloor \ge \left\lceil \tfrac{1}{2}\tilde{p}^{-k} \right\rceil \ge \tfrac{1}{2}p^{-k}$, and 
We consider three cases.

\noindent (i) $0<q<p<1$: Defining $\const_{1} \eqdef \ic{\frac{2(1-\alpha_1)}{1-\bar{\alpha}}}\norm{X_{1}-\bar{x}}^{2}+\tfrac{4\cB}{(1-\ic{\bar{\alpha}})(1-q/p)}$, we obtain from \eqref{eq:geometric}
\[
   \Ex(\norm{X_{k+1}-\bar{x}}^{2})\leq q^{k}\left(\ic{\frac{2(1-\alpha_1)}{1-\bar{\alpha}}}\norm{X_{1}-\bar{x}}^{2}\right)+\frac{4\cB}{1-\ic{\bar{\alpha}}}\sum_{i=1}^{k}(q/p)^{k-i}p^{k}\leq \const_{1}p^{k}.
   \]

\noindent (ii) $0<p<q<1$. Akin to (i) and defining $\const_{2}\eqdef \ic{\frac{2(1-\alpha_1)}{1-\bar{\alpha}}}\norm{X_{1}-\bar{x}}^{2}+\tfrac{4\cB}{(1-\ic{\bar{\alpha}})(1-p/q)}$, we arrive as above at the bound $\Ex(\norm{X_{k}-\bar{x}}^{2})\leq q ^{k}\const_{2}$. 
\\
\noindent (iii) $p=q<1$. Choose $\hat{p} \in (q,1)$ and $\const_{3}\eqdef \tfrac{1}{\exp(1)\ln(\hat{p}/q)}$, so that Lemma \ref{lem:geometric} yields $kq^{k}\leq \const_{3}\hat{p}^{k}$ for all $k\geq 1$. Therefore, plugging this estimate in eq. \eqref{eq:geometric}, we see
\begin{align*}
\Ex(\norm{X_{k}-\bar{x}}^{2})&\leq q^{k}\left(\ic{\frac{2(1-\alpha_1)}{1-\bar{\alpha}}}\norm{X_{1}-\bar{x}}^{2}\right)+\frac{4\cB}{1-\ic{\bar{\alpha}}}\sum_{i=1}^{k}q^{k}\\
&\leq \hat{p}^{k}\left(\ic{\frac{2(1-\alpha_1)}{1-\bar{\alpha}}}\norm{X_{1}-\bar{x}}^{2}\right)+\frac{4\cB}{1-\ic{\bar{\alpha}}}\const_{3}\hat{p}^{k}\\
&=\const_{4}\hat{p}^{k},
\end{align*}
after setting $\const_{4}\eqdef \ic{\frac{2(1-\alpha_1)}{1-\bar{\alpha}}}\norm{X_{1}-\bar{x}}^{2}+\frac{4\cB\const_{3}}{1-\ic{\bar{\alpha}}}$. Collecting these three cases together, verifies the first part of the proposition. 
\qed
\end{proof}
%%%%%%%%%%%%%%%%%%%%%%%%%%%%%%%%%%%%%%%%%%%%%%
%\US{UVS: Some issues below which I have addressed. Please check.} \MS{Thanks!}

\begin{proposition} [Oracle and Iteration Complexity under geometric sampling]
\label{prop:comgeom}
Given $\epsilon > 0$, define the stopping time $K_{\epsilon}$ as in eq. \eqref{eq:K}. Define 
\begin{equation}\label{eq:taueps}
\tau_{\epsilon}(p,q)\eqdef \left\{\begin{array}{ll} 
\lceil \frac{\ln(C(p,q)\epsilon^{-1})}{\ln(1/\max\{p,q\})}\rceil & \text{if }p\neq q,\\
\lceil\frac{\us{\ln(\hat{C}\epsilon^{-1})}}{\ln(1/\hat{p})}\rceil & \text{if }p=q
 \end{array}\right.
 \end{equation}
 and the same hypothesis as in Theorem \ref{th:linear1} apply. Then, $K_{\eps}\leq \tau_{\epsilon}(p,q)=\scrO(\us{\ln(\eps^{-1})})$. The corresponding oracle complexity of \ac{RISFBF} is upper bounded as $2\sum_{i=1}^{\tau_{\epsilon}(p,q)} m_i = \scrO\left((1/\epsilon)^{1+\delta(p,q)}\right)$, where 
\begin{align*}
\delta(p,q)\eqdef\left\{\begin{array}{ll}
0 & \text{if }p>q,\\
\frac{\ln(p)}{\ln(q)}-1 & \text{if }p\in(0,q),\\
\frac{\ln(p)}{\ln(\hat{p})}-1 & \text{if }p=q.
\end{array}\right.
\end{align*} 
\end{proposition}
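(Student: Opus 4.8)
The plan is to leverage the non-asymptotic linear rate already established in Proposition~\ref{prop:rate_geom}, which reduces the entire statement to elementary estimates involving the ceiling function and a single geometric sum. Throughout I would treat the generic case $p \neq q$ explicitly; the boundary case $p = q$ runs identically with the pair $(C(p,q),\max\{p,q\})$ replaced by $(\hat{C},\hat{p})$.

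First I would establish the iteration bound $K_\epsilon \leq \tau_\epsilon(p,q)$. By Proposition~\ref{prop:rate_geom}, for $p \neq q$ we have $\Ex(\norm{X_{k+1}-\bar{x}}^2) \leq C(p,q)\max\{p,q\}^k$. Since $\max\{p,q\} \in (0,1)$ gives $\ln(1/\max\{p,q\}) > 0$, solving $C(p,q)\max\{p,q\}^k \leq \epsilon$ shows that the smallest integer $k$ forcing the right-hand side below $\epsilon$ is exactly $\tau_\epsilon(p,q) = \lceil \ln(C(p,q)\epsilon^{-1}) / \ln(1/\max\{p,q\}) \rceil$. At $k = \tau_\epsilon(p,q)$ the rate bound already certifies $\Ex(\norm{X_{k+1}-\bar{x}}^2) \leq \epsilon$, so the definition of the stopping time in \eqref{eq:K} yields $K_\epsilon \leq \tau_\epsilon(p,q)$. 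Because $C(p,q),\hat{C}$ do not depend on $\epsilon$ and the denominators are fixed positive constants, $\tau_\epsilon(p,q) = \scrO(\ln(\epsilon^{-1}))$ is immediate.

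For the oracle complexity I would use the monotonicity of the batch sizes together with $K_\epsilon \leq \tau_\epsilon(p,q)$ to pass to $2\sum_{i=1}^{K_\epsilon} m_i \leq 2\sum_{i=1}^{\tau_\epsilon(p,q)} m_i$, and then dominate the resulting geometric sum via $m_i = \lfloor p^{-i}\rfloor \leq p^{-i}$:
\[
\sum_{i=1}^{\tau_\epsilon(p,q)} m_i \leq \sum_{i=1}^{\tau_\epsilon(p,q)} p^{-i} \leq \frac{p^{-\tau_\epsilon(p,q)}}{1-p}.
\]
The remaining task is to convert $p^{-\tau_\epsilon(p,q)}$ into a power of $1/\epsilon$. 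Bounding the ceiling by $\tau_\epsilon(p,q) \leq \ln(C(p,q)\epsilon^{-1})/\ln(1/\max\{p,q\}) + 1$, the extra $+1$ contributes only a constant factor $p^{-1}$, while the leading term renders $p^{-\tau_\epsilon(p,q)}$ of order $(C(p,q)\epsilon^{-1})^{\ln(1/p)/\ln(1/\max\{p,q\})}$, up to $\epsilon$-independent constants.

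The only genuine bookkeeping, and hence the main obstacle, is the case analysis that extracts the exponent $1 + \delta(p,q)$. When $p > q$, the exponent collapses to $\ln(1/p)/\ln(1/p) = 1$, so $\delta(p,q) = 0$ and the oracle complexity is $\scrO(1/\epsilon)$. When $p < q$, the exponent is $\ln(1/p)/\ln(1/q) = \ln(p)/\ln(q)$, which exceeds $1$ precisely because $p < q < 1$ forces $\abs{\ln p} > \abs{\ln q}$, and matching against $\delta(p,q) = \ln(p)/\ln(q) - 1$ gives the claimed $(1/\epsilon)^{1+\delta(p,q)}$. In the boundary case $p = q$ one invokes the $\hat{p}$-version of the rate and obtains the exponent $\ln(p)/\ln(\hat{p}) = 1 + \delta(p,q)$. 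Collecting the three cases and absorbing all $\epsilon$-independent factors into the $\scrO(\cdot)$ constant verifies $2\sum_{i=1}^{\tau_\epsilon(p,q)} m_i = \scrO((1/\epsilon)^{1+\delta(p,q)})$, which completes the argument.
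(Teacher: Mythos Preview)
Your proposal is correct and follows essentially the same approach as the paper: invoke the rate from Proposition~\ref{prop:rate_geom} to bound $K_\epsilon$ by $\tau_\epsilon(p,q)$, then dominate the oracle complexity by a finite geometric sum $\sum_{i=1}^{\tau_\epsilon} p^{-i}$ and convert $p^{-\tau_\epsilon}$ into a power of $1/\epsilon$ via the identity $(1/p)^{\ln(C\epsilon^{-1})/\ln(1/\max\{p,q\})} = (C\epsilon^{-1})^{\ln(1/p)/\ln(1/\max\{p,q\})}$, with the three-case split reading off the exponent $1+\delta(p,q)$. The only cosmetic difference is that the paper writes the geometric sum exactly before bounding the ceiling, whereas you bound it directly by $p^{-\tau_\epsilon}/(1-p)$; both yield the same $\scrO$-estimate.
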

\begin{proof} 
First, let us recall that the total oracle complexity of the method is assessed by 
\begin{align*}
2\sum_{i=1}^{K_{\epsilon}}m_{i}=2\sum_{i=1}^{K_{\epsilon}}\lfloor p^{-i}\rfloor\leq  2\sum_{i=1}^{K_{\epsilon}}p^{-i}.
\end{align*}
If $p\neq q$ define $\tau_{\epsilon}\equiv\tau_{\epsilon}(p,q)=\lceil \frac{\ln(C(p,q)\epsilon^{-1})}{\ln(1/\max\{p,q\})}\rceil$. Then, $\Ex(\norm{X_{\tau_{\epsilon}+1}-\bar{x}}^{2})\leq\epsilon$, and hence $K_{\epsilon}\leq \tau_{\epsilon}$. We now compute
\begin{align*}
\sum_{i=1}^{\tau_{\epsilon}}(1/p)^{i}&=\frac{1}{p}\frac{(1/p)^{\lceil \frac{\ln(C(p,q)\epsilon^{-1})}{\ln(1/\max\{p,q\})}\rceil}-1}{1/p-1}\leq\frac{1}{p^{2}} \frac{(1/p)^{\frac{\ln(C(p,q)\epsilon^{-1})}{\ln(1/\max\{p,q\})}}}{1/p-1}\\
&=\frac{\left(\epsilon^{-1}C(p,q)\right)^{\ln(1/p)/\ln(1/\max\{p,q\})}}{p(1-p)}.
\end{align*}
This gives the oracle complexity bound 
\[
2\sum_{i=1}^{\tau_{\epsilon}}m_{i}\leq 2\frac{\left(\epsilon^{-1}C(p,q)\right)^{\ln(1/p)/\ln(1/\max\{p,q\})}}{p(1-p)}.
\]
If $p=q$, we can replicate this calculation, after setting $\tau_{\epsilon}=\lceil\frac{\us{\ln(\epsilon^{-1}\hat{C})}}{\ln(1/\hat{p})}\rceil$. After so many iterations, we can be ensured that $\Ex(\norm{X_{\tau_{\epsilon}+1}-\bar{x}}^{2})\leq\epsilon$, with an oracle complexity
\[
2\sum_{i=1}^{\tau_{\epsilon}}m_{i}\leq \frac{2}{\hat{p}(1-\hat{p})}\left(\frac{\hat{C}}{\epsilon}\right)^{\ln(p)/\ln(\hat{p})}.
\]
\qed
\end{proof}

To the best of our knowledge, the provided non-asymptotic linear convergence guarantee appears to be amongst the first in relaxed and inertial splitting algorithms. In particular, by leveraging the increasing nature of mini-batches, this result no longer requires the unbiasedness assumption on the \ac{SO}, a crucial benefit of the proposed scheme. 

There may be settings where geometric growth of $m_k$ is challenging to adopt. To this end, we provide a result where the sampling rate is polynomial rather than geometric. A polynomial sampling rate arises if $m_{k}=\lceil{a_{k}(k+k_{0})^{\theta}+b_{k}\rceil}$ for some parameters $a_{k},b_{k},\theta>0$. Such a regime has been adopted in related mini-batch approaches~\cite{lei18game,lei2020asynchronous}.   %\cite{IusJofOliTho17, BotMerStaVu21}. 
This allows for modulating the growth rate by changing the exponent in the sampling rate. We begin by providing a supporting result. We make the specific choice $a_{k}=b_{k}=1$ for all $k\geq 1$, and $k_{0}=0$, leaving essentially the exponent $\theta>0$ as a free parameter in the design of the stochastic oracle. 

\begin{proposition}[{\bf Polynomial rate of convergence under polynomially increasing $m_k$}]
\label{prop:poly_rate}
 Suppose the conditions of Theorem \ref{th:linear1} hold. Choose the sampling rate $m_{k}=\lfloor k^{\theta}\rfloor$ where $\theta > 0$. Then, for any $k\geq 1$,  
 \begin{equation}\label{eq:poly-rate}
     \Ex(\norm{X_{k+1}-\bar{x}}^2 )   \leq q^{k} \left(\ic{\frac{2(1-\alpha_1)}{1-\bar{\alpha}}}\norm{X_{1}-\bar{x}}^{2}+\uss{\frac{2}{1-\bar{\alpha}}}\frac{q^{-1}\exp(2\theta)-1}{1-q}\right)+\frac{4\cB}{\uss{(1-\bar{\alpha})}q\ln(1/q)}(k+1)^{-\theta}
\end{equation}
\end{proposition}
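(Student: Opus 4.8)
The plan is to reduce the claim to the one–step contraction already established inside the proof of Theorem~\ref{th:linear1} and then to analyse the resulting linear recursion with a polynomially decaying forcing term. Recall that eq.~\eqref{eq:Hk2} gives $H_{k+1}\le q_k H_k-\tilde b_k+\Delta M_k$ with $q_k=1-\rho_k\eta\le q\in(0,1)$ and $\tilde b_k\ge0$, and that taking conditional expectations and discarding the nonnegative term $\tilde b_k$ produces, exactly as in \eqref{cond-exp}, the deterministic recursion $h_{k+1}\le q\,h_k+c_k$, where $h_k\eqdef\Ex(H_k)$ and $c_k\eqdef\Ex[\Delta M_k\mid\scrF_k]$. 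I would also carry over the two facts proved there, namely $h_k\ge\tfrac{1-\bar{\alpha}}{2}\Ex\norm{X_k-\bar{x}}^2$ and $h_1=(1-\alpha_1)\norm{X_1-\bar{x}}^2$.

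First I would bound the forcing sequence under the polynomial oracle. Using $\rho_k\le\bar{\rho}$, the estimate \eqref{eq:boundc}, and the elementary inequality $m_k=\lfloor k^\theta\rfloor\ge\tfrac12 k^\theta$ valid for all $k\ge1$, I obtain $c_k\le \kappa\bar{\rho}\cs^2/m_k\le 2\cB\,k^{-\theta}$. Unrolling the recursion then yields $h_{k+1}\le q^k h_1+\sum_{i=1}^k q^{k-i}c_i\le q^k h_1+2\cB\,\Sigma_k$, with $\Sigma_k\eqdef\sum_{i=1}^k q^{k-i}i^{-\theta}$ the mixed geometric–polynomial convolution. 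Everything now hinges on a sharp bound for $\Sigma_k$, and the summability furnished by Assumption~\ref{ass:batch} is what guarantees the forcing term is well controlled.

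The heart of the argument is to show that $\Sigma_k$ decomposes into a geometric transient of order $q^k$ and a polynomial steady state of order $(k+1)^{-\theta}$, i.e. $\Sigma_k\lesssim q^k\,\tfrac{q^{-1}\exp(2\theta)-1}{1-q}+\tfrac{1}{q\ln(1/q)}(k+1)^{-\theta}$ up to the normalisations that feed into \eqref{eq:poly-rate}. To this end I would split the sum into the \emph{recent} indices and the \emph{old} indices. On the recent block I would rewrite $i^{-\theta}=(k+1)^{-\theta}\bigl(\tfrac{k+1}{i}\bigr)^\theta$ and control the index ratio through $\ln(1+x)\le x$, which is precisely what turns $\bigl(\tfrac{k+1}{i}\bigr)^\theta$ into the factor $\exp(2\theta)$; on the old block, where $i^{-\theta}\le1$, the weights $q^{k-i}$ form a geometric tail whose residual mass I would estimate by comparison with the integral $\int q^{-t}\,dt=q^{-t}/\ln(1/q)$, which is the source of the $\tfrac{1}{q\ln(1/q)}$ prefactor, while summing the geometric series over the transient block produces the closed form $\tfrac{q^{-1}\exp(2\theta)-1}{1-q}$.

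Finally I would assemble the pieces: inserting the convolution bound into $h_{k+1}\le q^k h_1+2\cB\,\Sigma_k$, using $h_1=(1-\alpha_1)\norm{X_1-\bar{x}}^2$ together with the lower bound $\Ex\norm{X_{k+1}-\bar{x}}^2\le\tfrac{2}{1-\bar{\alpha}}h_{k+1}$, and collecting the two decaying contributions gives exactly \eqref{eq:poly-rate}. I expect the genuine difficulty to lie entirely in the convolution estimate: balancing the geometric transient against the polynomial tail so that the slower $(k+1)^{-\theta}$ rate survives while the constants $\tfrac{q^{-1}\exp(2\theta)-1}{1-q}$ and $\tfrac{1}{q\ln(1/q)}$ emerge sharply is delicate, since a crude midpoint split only yields the suboptimal transient rate $q^{k/2}$; the careful index–ratio and integral comparisons described above are what is needed to keep the geometric factor at $q^k$ and to pin down the prefactors.
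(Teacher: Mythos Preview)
Your reduction to the scalar recursion $h_{k+1}\le qh_k+c_k$ with $c_k\le 2\cB\,k^{-\theta}$, together with the sandwiching $\tfrac{1-\bar\alpha}{2}\,\Ex\norm{X_{k+1}-\bar x}^2\le h_{k+1}$ and $h_1=(1-\alpha_1)\norm{X_1-\bar x}^2$, is correct and matches the paper. The substantive work is, as you say, the convolution $\Sigma_k=\sum_{i=1}^k q^{k-i}i^{-\theta}$.

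Your handling of $\Sigma_k$, however, has a genuine gap. First, you have the two contributions swapped: in the paper the factor $\exp(2\theta)$ comes from the \emph{small}-$i$ block (where one uses $i^{-\theta}\le1$ and evaluates the finite geometric sum $\sum_{i=1}^{j^*} q^{-i}$ exactly, with $q^{-j^*}\approx\exp(2\theta)$), while the polynomial tail $(k+1)^{-\theta}$ with prefactor $\tfrac{1}{q\ln(1/q)}$ comes from the \emph{large}-$i$ block. Second, your index-ratio trick $i^{-\theta}=(k+1)^{-\theta}\bigl((k+1)/i\bigr)^\theta$ on a ``recent'' block only keeps the ratio bounded if the split point is a fixed fraction of $k$, and you yourself acknowledge that this degrades the geometric part from $q^k$ to $q^{ck}$. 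You then assert that ``careful index-ratio and integral comparisons'' fix this, but no mechanism is given; the sketch does not actually produce a bound that retains $q^k$.

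The paper's device is to split at the \emph{fixed} index $j^*=\lceil 2\theta/\ln(1/q)\rceil$, independent of $k$. On $i\le j^*$ one bounds $i^{-\theta}\le1$ and sums $q^k\sum_{i=1}^{j^*}q^{-i}$ in closed form; since $q^{-j^*}\le q^{-1}\exp(2\theta)$, this yields the transient term $q^k\,\tfrac{q^{-1}\exp(2\theta)-1}{1-q}$ with the full rate $q^k$ intact. On $i>j^*$ one passes to $\int_{j^*}^{k+1}q^{-t}t^{-\theta}\,dt$ and integrates by parts: the specific choice of $j^*$ guarantees $\tfrac{\theta}{t\ln(1/q)}\le\tfrac12$ for $t\ge j^*$, so the integral on the right is at most half the integral on the left plus the boundary term, whence $\int_{j^*}^{k+1}q^{-t}t^{-\theta}\,dt\le \tfrac{2q^{-(k+1)}(k+1)^{-\theta}}{\ln(1/q)}$. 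Multiplying back by $q^k$ gives the polynomial tail with exactly the constant $\tfrac{2}{q\ln(1/q)}$. This integration-by-parts step, tuned by the threshold $j^*$, is the missing idea in your proposal.
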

\begin{proof}
From the relation \eqref{eq:h}, we obtain 
\begin{align*}
h_{k+1}&\leq q^{k}h_{1}+\sum_{i=1}^{k}q^{k-i}c_{i}\leq q^{k}h_{1}+\cB \sum_{i=1}^{k}q^{k-i}i^{-\theta}\\
&= q^{k}\left(h_{1}+\cB\sum_{i=1}^{k}q^{-i}i^{-\theta}\right)\\
&= q^{k}\left(h_{1}+\cB\sum_{i=1}^{\lceil 2\theta/\ln(1/q)\rceil}q^{-i}i^{-\theta}+\cB \sum_{i=\lceil 2\theta/\ln(1/q)\rceil+1}^{k}q^{-i}i^{-\theta}\right).
\end{align*}

A standard bound based on the integral criterion for series with non-negative summands gives 
\[
\sum_{i=\lceil 2\theta/\ln(1/q)\rceil+1}^{k}q^{-i}i^{-\theta}\leq \int_{\lceil 2\theta/\ln(1/q)\rceil}^{k+1}\frac{(1/q)^{t}}{t^{\theta}}\dif t.
\]
The upper bounding integral can be evaluated using integration-by-parts, as follows: 
$$
\int_{\lceil 2\theta/\ln(1/q)\rceil}^{k+1}\frac{(1/q)^{t}}{t^{\theta}}\dif t=t^{\theta}\frac{e^{t\ln(1/q)}}{\ln(1/q)}\Big|_{t=\lceil 2\theta/\ln(1/q)\rceil}^{t=k+1}+\int_{\lceil 2\theta/\ln(1/q)\rceil}^{k+1}\theta t^{-(\theta+1)}\frac{e^{t\ln(1/q)}}{\ln(1/q)}\dif t.
$$
Note that $\frac{\theta}{t\ln(1/q)}\leq\frac{1}{2}$ when $t\geq \lceil 2\theta/\ln(1/q)\rceil$. Therefore, we can attain a simpler bound from the above by 
\begin{align*}
\int_{\lceil 2\theta/\ln(1/q)\rceil}^{k+1}\frac{(1/q)^{t}}{t^{\theta}}\dif t\leq \frac{(1/q)^{k+1}}{\ln(1/q)(k+1)^{\theta}}+\frac{1}{2}\int_{\lceil 2\theta/\ln(1/q)\rceil}^{k+1}\frac{(1/q)^{t}}{t^{\theta}}\dif t
\end{align*}
Consequently, 
$$
\int_{\lceil 2\theta/\ln(1/q)\rceil}^{k+1}\frac{(1/q)^{t}}{t^{\theta}}\dif t\leq \frac{2(1/q)^{k+1}(k+1)^{-\theta}}{\ln(1/q)}.
$$
Furthermore, 
$$
\sum_{i=1}^{\lceil 2\theta/\ln(1/q)\rceil}q^{-i}i^{-\theta}\leq \sum_{i=1}^{\lceil 2\theta/\ln(1/q)\rceil}q^{-i}=\frac{1}{q}\frac{(1/q)^{\lceil 2\theta/\ln(1/q)\rceil}-1}{1/q-1}\leq \frac{1}{q}\frac{(1/q)^{2\theta/\ln(1/q)+1}-1}{1/q-1}.
$$
Note that  $(1/q)^{2\theta/ \ln(1/q)}=\left(\exp(\ln(1/q))\right)^{ 2\theta/ \ln(1/q)}=\exp(2\theta)$. Hence, 
$$
\sum_{i=1}^{\lceil 2\theta/\ln(1/q)\rceil}q^{-i}i^{-\theta}\leq \frac{1}{q}\frac{q^{-1}\exp(2\theta)-1}{1/q-1}=\frac{q^{-1}\exp(2\theta)-1}{1-q}.
$$
Plugging this into the opening string of inequalities shows 
\begin{align*}
h_{k+1}&\leq q^{k}\left(h_{1}+\cB \sum_{i=1}^{\lceil 2\theta/\ln(1/q)\rceil}q^{-i}+\frac{2\cB(1/q)^{k+1}(k+1)^{-\theta}}{\ln(1/q)}\right)\\
&\leq q ^{k}\left(h_{1}+\cB \frac{q^{-1}\exp(2\theta)-1}{1-q}+\frac{2\cB(1/q)^{k+1}(k+1)^{-\theta}}{\ln(1/q)}\right)\\
&=q^{k}\left(h_{1}+\cB\frac{q^{-1}\exp(2\theta)-1}{1-q}\right)+\frac{2\cB/q}{\ln(1/q)}(k+1)^{-\theta}.
\end{align*}
Since $h_{1}=(1-\ic{\alpha_1})\norm{X_{1}-\bar{x}}^{2}$ and $h_{k+1}\geq\frac{1-\ic{\bar{\alpha}}}{2}\Ex\left(\norm{X_{k+1}-\bar{x}}^{2}\right)$, we finally arrive at the desired expression \eqref{eq:poly-rate}.
\qed\end{proof}

\begin{proposition}[Oracle and Iteration complexity under polynomial sampling]
Let all Assumptions as in Theorem \ref{th:linear1} hold. Given $\epsilon > 0$, define $K_{\epsilon}$ as in \eqref{eq:K}. Then
 the  iteration and oracle complexity to obtain an $\epsilon$-solution are $\scrO(\theta\eps^{-1/\theta})$ and $\scrO(\exp(\theta)\theta^{\theta}(1/\epsilon)^{1+1/\theta})$, respectively.
 \end{proposition}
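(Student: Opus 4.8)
The plan is to feed the explicit rate estimate \eqref{eq:poly-rate} from Proposition \ref{prop:poly_rate} into the stopping-time definition \eqref{eq:K}. Writing the right-hand side of \eqref{eq:poly-rate} as $q^{k}C_{1}+C_{2}(k+1)^{-\theta}$, where $C_{1}\eqdef\tfrac{2(1-\alpha_{1})}{1-\bar{\alpha}}\norm{X_{1}-\bar{x}}^{2}+\tfrac{2}{1-\bar{\alpha}}\tfrac{q^{-1}\exp(2\theta)-1}{1-q}$ is the coefficient of the geometrically decaying part and $C_{2}\eqdef\tfrac{4\cB}{(1-\bar{\alpha})q\ln(1/q)}$ that of the polynomially decaying part, I would locate the first index at which each of the two summands drops below $\epsilon/2$. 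Note that Assumption \ref{ass:batch} together with $m_{k}=\lfloor k^{\theta}\rfloor$ forces $\theta>1$ (since $\sum_{k}k^{-\theta}<\infty$), so throughout I may assume $\theta>1$ and $\epsilon\in(0,1)$, hence $\epsilon^{-1/\theta}\geq 1$.

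For the geometric summand, $q^{k}C_{1}\leq\epsilon/2$ holds once $k\geq\ln(2C_{1}/\epsilon)/\ln(1/q)$; since the only $\theta$-dependence in $C_{1}$ is the factor $\exp(2\theta)$, one has $\ln(2C_{1}/\epsilon)=\scrO(\theta+\ln(1/\epsilon))$, so this summand is controlled once $k=\scrO(\theta+\ln(1/\epsilon))$. For the polynomial summand, $C_{2}(k+1)^{-\theta}\leq\epsilon/2$ holds once $k+1\geq(2C_{2}/\epsilon)^{1/\theta}$; because $C_{2}$ is independent of $\theta$, we have $(2C_{2})^{1/\theta}\leq\max\{1,2C_{2}\}$ and the requirement reduces to $k=\scrO(\epsilon^{-1/\theta})$. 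Combining, $K_{\epsilon}=\scrO(\max\{\theta+\ln(1/\epsilon),\,\epsilon^{-1/\theta}\})$. Using $\ln(1/\epsilon)=\theta\ln(\epsilon^{-1/\theta})\leq\theta\,\epsilon^{-1/\theta}$ together with $\theta\leq\theta\,\epsilon^{-1/\theta}$ and $\epsilon^{-1/\theta}\leq\theta\,\epsilon^{-1/\theta}$ (valid since $\theta>1$, $\epsilon\in(0,1)$), every term is dominated by the single expression $\theta\,\epsilon^{-1/\theta}$, which yields the asserted iteration complexity $K_{\epsilon}=\scrO(\theta\,\epsilon^{-1/\theta})$.

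For the oracle complexity I would insert this bound into the cumulative sampling cost $2\sum_{i=1}^{K_{\epsilon}}m_{i}$. Using $m_{i}=\lfloor i^{\theta}\rfloor\leq i^{\theta}$ and the integral comparison $\sum_{i=1}^{K}i^{\theta}\leq\int_{1}^{K+1}t^{\theta}\dif t\leq\tfrac{(K+1)^{\theta+1}}{\theta+1}$, with $K=K_{\epsilon}\leq C\theta\,\epsilon^{-1/\theta}$ for a universal constant $C$, I obtain
\[
2\sum_{i=1}^{K_{\epsilon}}m_{i}\leq\frac{2\,(C\theta\,\epsilon^{-1/\theta}+1)^{\theta+1}}{\theta+1}\leq\frac{2\,(2C\theta)^{\theta+1}}{\theta+1}\,\epsilon^{-(\theta+1)/\theta}.
\]
It then remains to read off the three factors: $\epsilon^{-(\theta+1)/\theta}=\epsilon^{-1-1/\theta}=(1/\epsilon)^{1+1/\theta}$; $\tfrac{\theta^{\theta+1}}{\theta+1}\leq\theta^{\theta}$; and $(2C)^{\theta+1}=(2C)\exp(\theta\ln(2C))=\scrO(\exp(\theta))$, the exponential-in-$\theta$ growth of the per-iteration constant raised to the power $\theta+1$. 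Multiplying these gives the claimed $\scrO(\exp(\theta)\,\theta^{\theta}\,(1/\epsilon)^{1+1/\theta})$.

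The main obstacle is bookkeeping of the $\theta$-dependence of the constants rather than any single sharp estimate: one must check that the $\exp(2\theta)$ buried in $C_{1}$ enters the iteration count only \emph{logarithmically} (hence contributes a harmless additive $\scrO(\theta)$), while the genuinely costly factors $\theta^{\theta}$ and $\exp(\theta)$ in the oracle bound are produced \emph{solely} by raising $K_{\epsilon}=\scrO(\theta\,\epsilon^{-1/\theta})$ to the power $\theta+1$ in the sampling sum. Some care is also needed to justify absorbing the ``$+1$'' and the $2^{1/\theta}$-type prefactors into universal constants uniformly over $\theta>1$, and to be explicit that the base of the exponential is the absolute constant appearing in the iteration-count bound.
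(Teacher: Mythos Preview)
Your argument is correct and reaches the same conclusions, but the route differs from the paper's in one key step. The paper does \emph{not} split the right-hand side of \eqref{eq:poly-rate} into a geometric and a polynomial piece; instead it first majorizes the geometric term by a polynomial one via the elementary calculus bound $q^{k}\leq \exp(-\theta)\bigl(\theta/\ln(1/q)\bigr)^{\theta}k^{-\theta}$ (obtained by maximizing $t\mapsto t^{\theta}q^{t}$ over $t>0$). This collapses \eqref{eq:poly-rate} into a single estimate $\Ex(\norm{X_{k+1}-\bar{x}}^{2})\leq \const_{q,\theta}\,k^{-\theta}$, from which $K_{\epsilon}\leq\lceil(\const_{q,\theta}/\epsilon)^{1/\theta}\rceil$ and the oracle bound follow immediately by the same integral comparison you use.

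Your two-term splitting is arguably more transparent---it avoids the $t^{\theta}q^{t}$ optimization and makes explicit that the $\exp(2\theta)$ inside $C_{1}$ only contributes an additive $\scrO(\theta)$ to the iteration count. The price is that you need the extra inequalities $\ln(1/\epsilon)\leq\theta\epsilon^{-1/\theta}$ and $\epsilon^{-1/\theta}\leq\theta\epsilon^{-1/\theta}$ to merge the two thresholds, and the latter forces you to assume $\theta>1$ (which you correctly note is implied by Assumption~\ref{ass:batch}). The paper's unified polynomial bound works uniformly for all $\theta>0$ without this restriction. Both approaches are equally loose about writing the problem-dependent base raised to the power $\theta$ as ``$\exp(\theta)$'' in the final oracle estimate.
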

\begin{proof}

We first note that $(k+1)^{-\theta}\leq k^{-\theta}$ for all $k\geq 1$. Hence, the bound established in Proposition \ref{prop:poly_rate} yields
\[
    \Ex(\norm{X_{k+1}-\bar{x}}^{2})\leq q^{k}\left(\frac{2(1-\alpha_1)}{1-\bar{\alpha}}\norm{X_{1}-\bar{x}}^{2}+\frac{2}{1-\bar{\alpha}}\frac{q^{-1}\exp(2\theta)-1}{1-q}\right)+\frac{4\cB}{(1-\bar{\alpha})q\ln(1/q)}k^{-\theta}
\]
Consider the function $\psi(t)\eqdef t^{\theta}q^{t}$ for $t>0$. Then, straightforward calculus shows that $\psi(t)$ is unimodal on $(0,\infty)$, with unique maximum $t^{\ast}=\frac{\theta}{\ln(1/q)}$ and associated value $\psi(t^{\ast})=\exp(-\theta)\left(\frac{\theta}{\ln(1/q)}\right)^{\theta}$. Hence, for all $t>0$, we have $t^{\theta}q^{t}\leq \exp(-\theta)\left(\frac{\theta}{\ln(1/q)}\right)^{\theta}$, and consequently, $q^{k}\leq \exp(-\theta)\left(\frac{\theta}{\ln(1/q)}\right)^{\theta}k^{-\theta}$ for all $k\geq 1$. This allows us to conclude 
\begin{align*}
\Ex(\norm{X_{k+1}-\bar{x}}^{2})&\leq \exp(-\theta)\left(\frac{\theta}{\ln(1/q)}\right)^{\theta}k^{-\theta}\left(\ic{\frac{2(1-\alpha_1)}{1-\bar{\alpha}}}\norm{X_{1}-\bar{x}}^{2}+\uss{\frac{2}{1-\bar{\alpha}}}\frac{q^{-1}\exp(2\theta)-1}{1-q}\right)\\
&+\frac{4\cB}{(1-\bar{\alpha})q\ln(1/q)}k^{-\theta}= \const_{q,\theta}k^{-\theta},
\end{align*}
where
\begin{equation}
    \const_{q,\theta}\eqdef \exp(-\theta)\left(\frac{\theta}{\ln(1/q)}\right)^{\theta}\left(\ic{\frac{2(1-\alpha_1)}{1-\bar{\alpha}}}\norm{X_{1}-\bar{x}}^{2}+\uss{\frac{2}{1-\bar{\alpha}}}\frac{q^{-1}\exp(2\theta)-1}{1-q}\right)+\frac{4\cB}{(1-\bar{\alpha})q\ln(1/q)}
\end{equation}
Then, for any $k\geq K_{\epsilon}\eqdef \lceil (\const_{q,\theta}/\epsilon)^{1/\theta}\rceil$, we are ensured that $\Ex(\norm{X_{k+1}-\bar{x}}^{2})\leq \eps$. Since $(\const_{q,\theta})^{1/\theta}=\scrO(\exp(-1)\theta)$, we conclude that $K_{\epsilon}=\scrO(\theta\epsilon^{-1/\theta})$. The corresponding oracle complexity is bounded as follows: 
$$
2\sum_{i=1}^{K_{\epsilon}}m_{i}\leq 2\sum_{i=1}^{K_{\epsilon}}i^{\theta}\leq2\int_{1}^{K_{\epsilon}+1}t^{\theta}\dif t\leq \frac{2}{1+\theta}\left(\lceil\frac{\const_{q,\theta}}{\epsilon}\rceil^{1/\theta}+1\right)^{1+\theta}=\scrO(\exp(\theta)\theta^{\theta}(1/\epsilon)^{1+1/\theta}).
$$
\qed
\end{proof}

\begin{remark} {It may be observed that if the $\theta= 1$ or $m_k = k$, there is a worsening of the rate and complexity statements from their counterparts when the sampling rate is geometric; in particular, the iteration complexity worsens from $\mathcal{O}(\ln(\tfrac{1}{\epsilon}))$ to $\mathcal{O}(\tfrac{1}{\epsilon})$ while the oracle complexity degenerates from the optimal level of $\mathcal{O}(\tfrac{1}{\epsilon})$ to $\mathcal{O}(\tfrac{1}{\epsilon^2})$. But this deterioration comes with the advantage that the sampling rate is far slower and this may be of signficant consequence in some applications.}
\end{remark}

\subsection{Rates in terms of merit functions}
\label{sec:rates}
%%% Gap Function
%--------------------------------------------------------------------
% !TEX root = ./RISFBF_MatProg.tex
%
In this subsection we estimate the iteration and oracle complexity of
\ac{RISFBF} with the help of a suitably defined \emph{gap function}. Generally,
a gap function associated with the monotone inclusion problem \eqref{eq:MI} is
a function $\gap:\setH\to\R$ such that  (i) $\gap$ is sign restricted on
$\setH$; and (ii) $\gap(x) = 0$ if and only if $x\in\setS$. The
\emph{Fitzpatrick function} \cite{Fit88,SimZal04,BorDut16,BauCom16} is a useful tool to
construct gap functions associated with a set-valued operator $F:\setH\to
2^{\setH}$. It is defined as the extended-valued function
$G_{F}:\setH\times\setH\to[-\infty,\infty]$ given by \begin{equation}
G_{F}(x,x^{\ast})=\inner{x,x^{\ast}}-\inf_{(y,y^{\ast})\in\gr(F)}\inner{x-y,x^{\ast}-y^{\ast}}.
\end{equation}
This function allows us to recover the operator $F$, by means of the following result (cf. \cite[Prop. 20.58]{BauCom16}): If $F:\setH\to 2^{\setH}$ is maximally monotone, then $G_{F}(x,x^{\ast})\geq\inner{x,x^{\ast}}$ for all $(x,x^{\ast})\in\setH\times\setH$, with equality if and only if $(x,x^{\ast})\in\gr(F)$. In particular, $\gr(F)=\{(x,x^{\ast})\in\setH\times\setH\vert \;G_{F}(x,x^{\ast})\geq\inner{x,x^{\ast}}\}$. In fact, it can be shown that the Fitzpatrick function is minimal in the family of convex functions $f:\setH\times\setH\to(-\infty,\infty]$ such that $f(x,x^{\ast})\geq\inner{x,x^{\ast}}$ for all $(x,x^{\ast})\in\setH\times\setH$, with equality if $(x,x^{\ast})\in\gr(F)$ \cite{BorDut16}.

Our gap function for the structured monotone operator $F=V+T$ is derived from its Fitzpatrick function by setting $\gap(x)\eqdef G_{F}(x,0)$ for $x\in\setH$. This reads explicitly as 
\begin{equation}\label{eq:gap}
\gap(x) \eqdef  \sup_{(y,y^{\ast})\in\gr(F)}\inner{y^{\ast},x-y}=\sup_{p\in\dom T}\sup_{p^{\ast}\in T(p)}\inner{V(p)+p^{\ast},x-p}\qquad \forall x\in\setH.
\end{equation}
It immediately follows from the definition that $\gap(x)\geq 0$ for all $x\in\setH$. It is also clear, that $x\mapsto\gap(x)$ is convex and lower semi-continuous and $\gap(x)=0$ if and only if $x\in\setS=\Zer(F)$. Let us give some concrete formulae for the gap function.

\begin{example}[Variational Inequalities]
We reconsider the problem described in Example \ref{ex:SVI}. Let $V:\setH\to\setH$ be a maximally monotone and $L$-Lipschitz continuous map, and $T(x)=\NC_{\setC}(x)$ the normal cone of a given closed convex set $\setC\subset\setH$. Then, by \cite[Prop. 3.3]{BorDut16}, the gap function \eqref{eq:gap} reduces to the well-known \emph{dual gap function}, due to \cite{Aus74}, 
\[
\gap(x)= \sup_{p\in\setC} \ \inner{V(p),x-p}.
\]
\end{example}

\begin{example}[Convex Optimization]
Reconsider the general non-smooth convex optimization problem in Example \ref{ex:PDA}, with primal objective function $\setH_{1}\ni u\mapsto f(u)+g(Lu)+h(u)$. Let us introduce the convex-concave function
\[
\scrL(u,v)\eqdef f(u)+h(u)-g^{\ast}(v)+\inner{Lu,v}\qquad\forall (u,v)\in\setH_{1}\times\setH_{2}.
\]
 Define
\begin{equation}\label{eq:Gamma}
\Gamma(x')\eqdef \sup_{u\in\setH_{1},v\in\setH_{2}}\left(\scrL(u',v)-\scrL(u,v')\right)\qquad \forall x'=(u',v')\in\setH=\setH_{1}\times\setH_{2}.
\end{equation}
It is easy to check that $\Gamma(x')\geq 0$, and equality holds only for a primal-dual pair (saddle-point) $\bar{x}\in\setS$. Hence, $\Gamma(\cdot)$ is a gap function for the monotone inclusion derived from the Karush-Kuhn-Tucker conditions \eqref{eq:KKT}. In fact, the function \eqref{eq:Gamma} is a standard merit function for saddle-point problems (see e.g. \cite{ChenLanOuy2014}). To relate this gap function to the Fitzpatrick function, we exploit the maximally monotone operators $V$ and $T$ introduced Example \ref{ex:PDA}. In terms of these mappings, first observe that for $p=(\tilde{u},\tilde{v}),x=(u,v)$ we have
\[
\inner{V(p),x-p}=\inner{\nabla h(\tilde{u}),u-\tilde{u}}+\inner{\tilde{v},Lu}-\inner{L\tilde{u},v}
\]
Since $h$ is convex differentiable, the classical gradient inequality reads as $h(u)-h(\tilde{u})\geq\inner{\nabla h(\tilde{u}),u-\tilde{u}}$. Using this estimate in the previous display shows 
\[
\inner{V(p),x-p}\leq h(u)-h(\tilde{u})-\inner{L\tilde{u},v}+\inner{\tilde{v},Lu}.
\]
For $p^{\ast}=(\tilde{u}^{\ast},\tilde{v}^{\ast})\in T(p)$, we again employ convexity to get 
\begin{align*}
f(u)&\geq f(\tilde{u})+\inner{\tilde{u}^{\ast},u-\tilde{u}}\qquad\forall u\in\setH_{1},\\
g^{\ast}(v)&\geq g^{\ast}(\tilde{v})+\inner{\tilde{v}^{\ast},v-\tilde{v}}\qquad\forall v\in\setH_{2}.
\end{align*}
Hence, 
\[
\inner{\tilde{u}^{\ast},u-\tilde{u}}+\inner{\tilde{v}^{\ast},v-\tilde{v}}\leq (f(u)-f(\tilde{u}))+(g^{\ast}(v)-g^{\ast}(\tilde{v})).
\]
Therefore, we see
\begin{align*}
\inner{V(p)+p^{\ast},x-p}&\leq \left(f(u)+h(u)-g^{\ast}(\tilde{v})+\inner{\tilde{v},Lu}\right)-\left(f(\tilde{u})+h(\tilde{u})-g^{\ast}(v)+\inner{v,L\tilde{u}}\right)\\
&=\scrL(u,\tilde{v})-\scrL(\tilde{u},v).
\end{align*}
Hence, 
\begin{align*}
\gap(x)=\sup_{(p,p^{\ast})\in\gr(T)}\inner{V(p)+p^{\ast},x-p}&\leq\sup_{(\tilde{u},\tilde{v})\in\setH_{1}\times\setH_{2}}\left(\scrL(u,\tilde{v})-\scrL(\tilde{u},v)\right)=\Gamma(x).
\end{align*}
\end{example}
%%%%%%%%%%%%%%%%%%%%%%%%%%%%%%%

It is clear from the definition that a convex gap function can be extended-valued and its domain is contingent on the boundedness properties of $\dom T$. In the setting where $T(x)$ is bounded for all $x\in\setH$, the gap function is clearly
globally defined. However, the case where $\dom T$ is unbounded has to be handled with more care. There are potentially two approaches to cope with such a
situation: One would be to introduce a perturbation-based termination criterion as defined in \cite{MonSva10}, and recently used in \cite{CheLanOuy17} to solve
a class of structured stochastic variational inequality problems. The other solution strategy is based on the notion of \emph{restricted merit functions}, first introduced in \cite{Nes07}, and later on adopted in \cite{Mal20}. We follow the latter strategy. 

Let $x^{s}\in\dom T$ denote an arbitrary reference point and $D>0$ a suitable constant. Define the closed set $\setC\eqdef\dom T\cap\{x\in\setH\vert\; \norm{x-x^{s}}\leq D\}$, and the \emph{restricted gap function}
\begin{equation}\label{eq:restrictedgap}
\gap(x\vert\setC)\eqdef\sup\{\inner{y^{\ast},x-y}\vert y\in\setC,y^{\ast}\in F(y)\}.
\end{equation}

Clearly, $\gap(x\vert\dom T)=\gap(x)$. The following result explains in a precise way the meaning of the restricted gap function. It extends the variational case in \cite[Lemma 1]{Nes07} and \cite[Lemma 3]{Mal20} to the general monotone inclusion case.

\begin{lemma}
Let $\setC\subset\setH$ be nonempty closed and convex. The function $\setH\ni x\mapsto \gap(x\vert\setC)$ is well-defined and convex on $\setH$. For any $x\in\setC$ we have $\gap(x\vert\setC)\geq 0$. Moreover, if $\bar{x}\in\setC$ is a solution to \eqref{eq:MI}, then $\gap(\bar{x}\vert\setC)=0$. Moreover, if $\gap(\bar{x}\vert\setC)=0$ for some $\bar{x}\in\dom T$ such that $\norm{\bar{x}-x^{s}}<D$, then $\bar{x}\in\setS$. 
\end{lemma}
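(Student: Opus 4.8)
The first three assertions are essentially structural, and I would dispatch them quickly. \emph{Well-definedness and convexity:} for each admissible $(y,y^{\ast})\in\gr(F)$ with $y\in\setC$, the map $x\mapsto \inner{y^{\ast},x-y}=\inner{y^{\ast},x}-\inner{y^{\ast},y}$ is affine and real-valued; since the index set is nonempty (e.g. $x^{s}\in\setC\cap\dom(F)$ in the construction of interest), the supremum in \eqref{eq:restrictedgap} defines a function $\setH\to(-\infty,+\infty]$ that is convex and lower semicontinuous, being a pointwise supremum of continuous affine functions. \emph{Nonnegativity on $\setC$:} for $x\in\setC$ (which in the relevant construction lies in $\dom T=\dom(F)$) I would simply insert the feasible choice $y=x$ with any $y^{\ast}\in F(x)$, giving $\inner{y^{\ast},x-x}=0$ and hence $\gap(x\vert\setC)\geq 0$. \emph{Solutions have vanishing gap:} if $\bar{x}\in\setC$ solves \eqref{eq:MI} then $(\bar{x},0)\in\gr(F)$, and monotonicity of $F$ yields $\inner{y^{\ast}-0,y-\bar{x}}\geq 0$, i.e. $\inner{y^{\ast},\bar{x}-y}\leq 0$, for every $(y,y^{\ast})\in\gr(F)$; taking the supremum over $y\in\setC$ gives $\gap(\bar{x}\vert\setC)\leq 0$, which together with the previous item forces $\gap(\bar{x}\vert\setC)=0$.

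The substance of the lemma is the final implication, and here the plan is to exploit \emph{maximality} of $F$. By the graph characterization of a zero, it suffices to establish the \emph{unrestricted} monotonicity relation
\[
\inner{y^{\ast},y-\bar{x}}\geq 0\qquad\text{for every }(y,y^{\ast})\in\gr(F),
\]
since maximal monotonicity of $F$ then guarantees $(\bar{x},0)\in\gr(F)$, i.e. $0\in F(\bar{x})$ and $\bar{x}\in\setS$. The hypothesis $\gap(\bar{x}\vert\setC)=0$ only supplies this inequality for $y\in\setC$, so the entire difficulty is to \emph{propagate} it to arbitrary $y\in\dom(F)$ that may lie far outside the localization ball.

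My propagation device is a short segment toward $\bar{x}$ combined with a single monotonicity step. Fix $(y,y^{\ast})\in\gr(F)$ and set $z_{t}\eqdef(1-t)\bar{x}+ty$ for $t\in(0,1)$. The role of the strict interiority $\norm{\bar{x}-x^{s}}<D$ is precisely to ensure that $z_{t}\in\setC$ for all sufficiently small $t$: convexity places $z_{t}$ in $\dom T$, while $\norm{z_{t}-x^{s}}\leq\norm{\bar{x}-x^{s}}+t\norm{y-\bar{x}}<D$ as soon as $t<(D-\norm{\bar{x}-x^{s}})/\norm{y-\bar{x}}$. Choosing such a $t$ and any $z_{t}^{\ast}\in F(z_{t})$, the restricted-gap inequality at $(z_{t},z_{t}^{\ast})$ reads $\inner{z_{t}^{\ast},\bar{x}-z_{t}}\leq 0$; since $\bar{x}-z_{t}=-t(y-\bar{x})$ this yields $\inner{z_{t}^{\ast},y-\bar{x}}\geq 0$. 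Monotonicity of $F$ applied to the pair $(y,y^{\ast}),(z_{t},z_{t}^{\ast})$ gives $\inner{y^{\ast}-z_{t}^{\ast},y-z_{t}}\geq 0$, and because $y-z_{t}=(1-t)(y-\bar{x})$ this simplifies to $\inner{y^{\ast},y-\bar{x}}\geq\inner{z_{t}^{\ast},y-\bar{x}}\geq 0$, the desired unrestricted inequality.

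I expect the only delicate point to be the step ``$z_{t}\in\setC$ for small $t$'', which tacitly uses that the segment from $\bar{x}$ to $y$ re-enters $\dom T$ near $\bar{x}$; this is immediate when $\setC$ (equivalently $\dom T$) is convex and $\bar{x}$ is interior to the localization ball, and is exactly why the conclusion is phrased with the strict inequality $\norm{\bar{x}-x^{s}}<D$ rather than $\leq D$. Everything else is elementary monotonicity algebra, and the decisive structural input is the maximality of $F$, which converts a monotone relation holding against the entire graph into genuine membership in the graph.
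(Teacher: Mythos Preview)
Your argument for the first three assertions is fine and matches what the paper does (the paper simply declares them ``clear''). For the final implication your proof is correct, but it follows a genuinely different route from the paper's.

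The paper proceeds by first recasting $\gap(\bar{x}\vert\setC)=0$ as the statement that $\bar{x}$ is a \emph{Minty} solution of the generalized variational inequality associated with $F+\NC_{\setC}$, then invokes an external Minty--Stampacchia equivalence result to produce $\bar{x}^{\ast}\in F(\bar{x})$ with $\inner{\bar{x}^{\ast},y-\bar{x}}\geq 0$ for all $y\in\setC$. The strict interiority $\norm{\bar{x}-x^{s}}<D$ is then used to argue that this linear constraint propagates from $\setC$ to all of $\dom(F)$, after which a monotonicity step yields $\gap(\bar{x})=0$.

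Your approach bypasses the Minty--Stampacchia black box entirely: you go directly for the graph characterization of maximal monotonicity by showing $\inner{y^{\ast},y-\bar{x}}\geq 0$ for every $(y,y^{\ast})\in\gr(F)$. The segment device $z_{t}=(1-t)\bar{x}+ty$ together with one monotonicity inequality is all that is needed, and the conclusion $0\in F(\bar{x})$ then drops out of maximality of $F$ rather than from an equivalence theorem. This is more elementary and self-contained; the paper's route has the advantage of producing an explicit $\bar{x}^{\ast}\in F(\bar{x})$ along the way, but that is not needed for the lemma. Both proofs rely (yours explicitly, the paper's implicitly in the ``constraint can be removed'' step) on the segment $[\bar{x},y]$ re-entering $\dom T$ near $\bar{x}$, which is guaranteed once $\dom T$ is convex---a point you correctly flag.
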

\begin{proof}
The convexity and non-negativity for $x\in\setC$ of the restricted function is clear. Since $\gap(x\vert C)\leq \gap(x)$ for all $x\in\setH$, we see 
\[
\bar{x}\in\setS\iff \gap(\bar{x})=0\Rightarrow \gap(\bar{x}\vert\setC)=0.
\]
To show the converse implication, suppose $\gap(\bar{x}\vert\setC)=0$ for some $\bar{x}\in\setC$ with $\norm{\bar{x}-x^{s}}<D$. Without loss of generality we can choose $\bar{x}\in\setC$ in this particular way, since we may choose the radius of the ball as large as desired. It follows that $\inner{y^{\ast},\bar{x}-y}\leq 0$ for all $y\in\setC,y^{\ast}\in F(y)$. Hence, $\bar{x}\in\setC$ is a Minty solution to the Generalized Variational inequality with maximally monotone operator $F(x)+\NC_{\setC}(x)$. Since $F$ is upper semi-continuous and monotone, Minty solutions coincide with Stampacchia solutions, implying that there exists $\bar{x}^{\ast}\in F(\bar{x})$ such that $\inner{\bar{x}^{\ast},y-\bar{x}}\geq 0$ for all $y\in\setC$ (see e.g. \cite{BurMil20}). Consider now the gap program 
\begin{align*}
g_{\setC}(\bar{x},\bar{x}^{\ast})\eqdef \inf\{\inner{\bar{x}^{\ast},y-\bar{x}}\vert y\in\setC\}.
\end{align*}
This program is solved at $y=\bar{x}$, which is a point for which $\norm{x-x^{s}}<D$. Hence, the constraint can be removed, and we conclude $\inner{\bar{x}^{\ast},y-\bar{x}}\geq 0$ for all $y\in\dom(F)$. By monotonicity of $F$, it follows
\[
\inner{y^{\ast},y-\bar{x}}\geq \inner{\bar{x}^{\ast},y-\bar{x}}\geq 0 \quad\forall (y,y^{\ast})\in\gr(F).
\]
Hence, $\gap(\bar{x})=0$ and we conclude $\bar{x}\in\setS$.
\qed
\end{proof}

We start with the first preliminary result. %\todo{UVS: For consistency, we could specify $\lambda \in (0,1/(4L)]$ but it is not a big issue in my view. In the numerics, a larger steplength may have better performance.}
%\SC{I rewrote the following part. Now the analysis is clean and consistent with the almost sure convergence part and we get a better range of $\lambda$ and $\rho_k$.}

\begin{lemma}\label{lem:gap}
Consider the sequence $(X_{k})_{k\in\N}$ generated by \ac{RISFBF} with the initial condition $X_{0}=X_{1}$. Suppose $\lambda_k=\lambda \in (0,1/(2L))$ for every $k \in \N$. Moreover, suppose $(\alpha_k)_{k\in\N}$ is a non-decreasing sequence such that $0<\alpha_k\le\bar{\alpha}<1$, $\rho_k=\tfrac{3(1-\bar{\alpha})^2}{2(2\alpha_k^2-\alpha_k+1)(1+L\lambda)}$ for every $k \in\N$. Define 
 \begin{equation}
 \Delta M_{k}\eqdef \frac{3\rho_{k}\lambda_{k}^{2}}{1+L\lambda_{k}} \norm{\ce_{k}}^{2}+\frac{\rho_{k}\lambda^{2}_{k}}{2}\norm{U_{k}}^{2}
 \end{equation}
 and for $(p,p^{\ast})\in\gr(F)$, we define $\Delta N_{k}(p,p^{\ast})$ as in \eqref{eq:N}. Then, for all $(p,p^{\ast})\in\gr(F)$, we have
\begin{equation}\label{eq:gapbound1}
\sum_{k=1}^{K}2\rho_{k}\lambda\inner{p^{\ast},Y_k-p}\leq (1-\alpha_1)\norm{X_{1}-p}^2+\sum_{k=1}^{K}\Delta M_{k}+\sum_{k=1}^{K}\Delta N_{k}(p,0).
\end{equation}
\end{lemma}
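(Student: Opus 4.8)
The plan is to re-run the derivation behind the Fundamental Recursion (Lemma~\ref{lem:Recursion}), but adapted to the wider step-size window $\lambda\in(0,\tfrac{1}{2L})$ and, crucially, \emph{without} invoking the residual splitting of Lemma~\ref{lem:YZG}, so that the inner product $\inner{p^{\ast},Y_k-p}$ is retained explicitly rather than folded into $\Delta N_k$. Concretely, I would restart from the identity \eqref{bd-eq1} together with the bound \eqref{eq:Rk} on $\norm{R_k-p}^2$, keeping the monotonicity term $2\rho_k\lambda\inner{V(Y_k)-V(p),p-Y_k}$ and splitting $2\rho_k\lambda\inner{W_k+p^{\ast},p-Y_k}=\Delta N_k(p,0)-2\rho_k\lambda\inner{p^{\ast},Y_k-p}$. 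Since $V$ is monotone, $\inner{V(Y_k)-V(p),Y_k-p}\ge0$, so the monotonicity term is nonpositive and is simply discarded.

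Next I would process the two squared-distance terms exactly as in the passage producing \eqref{bd-eq2}, except that the factor $(1/2-2L\lambda)$ is everywhere replaced by $(1-2L\lambda)$, which is legitimate because $\lambda<\tfrac{1}{2L}$ makes $1-2L\lambda>0$ and $(1-2L\lambda)(1+L\lambda)\le 1-2L^2\lambda^2$. A short computation then collapses the coefficient of $\norm{X_{k+1}-Z_k}^2$ to $\tfrac{3-2\rho_k(1+L\lambda)}{2\rho_k(1+L\lambda)}$ and that of $\norm{\ce_k}^2$ to $\tfrac{3\rho_k\lambda^2}{1+L\lambda}$. This is precisely where the constant $3$ (rather than the $5$ of Lemma~\ref{lem:Recursion}) and the prescription $\rho_k=\tfrac{3(1-\bar\alpha)^2}{2(2\alpha_k^2-\alpha_k+1)(1+L\lambda)}$ originate, the latter guaranteeing $\rho_k<\tfrac{3}{2(1+L\lambda)}$ so the first coefficient is strictly positive. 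Bounding $\tfrac{3\rho_k\lambda^2}{1+L\lambda}\norm{\ce_k}^2\le\Delta M_k$ yields the per-iteration inequality
\begin{equation*}
2\rho_k\lambda\inner{p^{\ast},Y_k-p}\le \norm{Z_k-p}^2-\norm{X_{k+1}-p}^2-C_k\norm{X_{k+1}-Z_k}^2+\Delta M_k+\Delta N_k(p,0),
\end{equation*}
with $C_k\eqdef\tfrac{3}{2\rho_k(1+L\lambda)}-1=\tfrac{2\alpha_k^2-\alpha_k+1}{(1-\bar\alpha)^2}-1$.

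The second stage is to telescope. I substitute \eqref{eq:Z1} for $\norm{Z_k-p}^2$ and the lower bound \eqref{eq:Xk1} for $\norm{X_{k+1}-Z_k}^2$, and introduce the Lyapunov quantity $G_k\eqdef\norm{X_k-p}^2-\alpha_k\norm{X_{k-1}-p}^2+(1-\alpha_k)C_k\norm{X_k-X_{k-1}}^2$, the exact analog of $H_k$ in \eqref{eq:Hk} (and of $Q_k$). Matching the per-iteration bound against $G_k-G_{k+1}$ term by term, the recursion $2\rho_k\lambda\inner{p^{\ast},Y_k-p}\le G_k-G_{k+1}+\Delta M_k+\Delta N_k(p,0)$ follows provided: (a) the $\norm{X_k-p}^2$ coefficients obey $(1+\alpha_k)\le(1+\alpha_{k+1})$, trivial since $(\alpha_k)$ is non-decreasing; (b) the $\norm{X_k-X_{k-1}}^2$ coefficient satisfies $\alpha_k(1+\alpha_k)+C_k\alpha_k(1-\alpha_k)\le(1-\alpha_k)C_k$, i.e. $C_k\ge\tfrac{\alpha_k(1+\alpha_k)}{(1-\alpha_k)^2}$; and (c) $(1-\alpha_{k+1})C_{k+1}\le(1-\alpha_k)C_k$. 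Both (b) and (c) are inherited from the analysis preceding \eqref{eq:Q2}: after substituting $\rho_k$, (b) is exactly the content of \eqref{eq:h1}, while (c) is $\beta_k\ge0$ from \eqref{beta}, since with $\lambda$ constant one has $(1-\alpha_k)C_k=f(\alpha_k)$ for the function $f$ shown there — evaluated at $\bar\eps=0$ — to be strictly decreasing, so its value is non-increasing along the non-decreasing sequence $(\alpha_k)$.

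Finally I sum over $k=1,\dots,K$ to get $\sum_k 2\rho_k\lambda\inner{p^{\ast},Y_k-p}\le G_1-G_{K+1}+\sum_k\Delta M_k+\sum_k\Delta N_k(p,0)$. The initialization $X_0=X_1$ forces $\norm{X_1-X_0}^2=0$ and $\norm{X_0-p}=\norm{X_1-p}$, so $G_1=(1-\alpha_1)\norm{X_1-p}^2$; and $G_{K+1}\ge0$ follows from the same Young-inequality and triangle-inequality chain used to establish $Q_k\ge0$ (resp.\ $H_k\ge\tfrac{1-\bar\alpha}{2}\norm{X_k-\bar{x}}^2$), noting that the present coefficient $2\alpha_k^2-\alpha_k+1$ already sits at the form obtained there after lower-bounding. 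Discarding $-G_{K+1}\le0$ then gives \eqref{eq:gapbound1}. The main obstacle is the first stage: pinning down the $(1-2L\lambda)$ rescaling so that the two coefficients simplify to exactly $3$, and reconciling this with the stated $\rho_k$; once the per-iteration inequality is in hand, the telescoping is a direct transcription of the monotone-case energy argument with $\bar\eps$ set to $0$.
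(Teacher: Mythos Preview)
Your proposal is correct and follows essentially the same approach as the paper's own proof: re-deriving the fundamental recursion with the factor $(1-2L\lambda)$ in place of $(1/2-2L\lambda)$ (yielding the constant $3$ rather than $5$), discarding the monotonicity term, splitting $\Delta N_k(p,p^{\ast})=\Delta N_k(p,0)-2\rho_k\lambda\inner{p^{\ast},Y_k-p}$, and telescoping the Lyapunov quantity $G_k$ after verifying the sign conditions via the choice of $\rho_k$ and the monotonicity of $f$ at $\bar\eps=0$. Your treatment is in fact slightly more explicit than the paper's in two places: you argue $G_{K+1}\ge 0$ (the paper silently drops it), and you absorb the $\tfrac{\rho_k\lambda^2}{2}\norm{U_k}^2$ term into $\Delta M_k$ only at the end rather than carrying it from the outset, which is cleaner since that term does not arise from \eqref{bd-eq1sc} without Lemma~\ref{lem:YZG}.
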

\begin{proof}
For $(p,p^{\ast})\in\gr(V+T)$, we know from eq. \eqref{eq:MT} 
\[
\inner{Z_{k}-R_{k},Y_{k}-p}\geq \lambda_{k}\inner{W_{k}+p^{\ast},Y_{k}-p}+\lambda_{k}\inner{V(Y_{k})-V(p),Y_{k}-p}\geq \inner{p^{\ast},Y_{k}-p}+\lambda_{k}\inner{W_{k},Y_{k}-p},
\]
where the last inequality uses the monotonicity of $V$. We first derive a recursion which is similar to the fundamental recursion in Lemma \ref{lem:Recursion}. Invoking \eqref{bd-eq1} and \eqref{bd-eq1sc}, we get
\begin{align}
\notag\norm{X_{k+1}-p}^{2}&\leq \norm{Z_{k}-p}^{2}-\frac{1-\rho_{k}}{\rho_{k}}\norm{X_{k+1}-Z_{k}}^{2}+2\lambda^{2}\rho_{k}\norm{\ce_{k}}^{2}-2\rho_{k}\lambda_{k}\inner{W_{k}+p^{\ast},Y_{k}-p} \nonumber\\
&-\rho_{k}(1-2L^{2}\lambda^{2}_{k})\norm{Y_{k}-Z_{k}}^{2}+\frac{\rho_{k}\lambda^{2}_{k}}{2}\norm{U_{k}}^{2}+2\rho_{k}\lambda_{k}\inner{V(Y_{k})-V(p),p-Y_{k}}. \label{g-1}
\end{align} 
Multiplying both sides of \eqref{bd-eq2sc} and noting that $(1-2L\lambda_{k})(1+L\lambda_{k})\leq 1-2L^{2}\lambda^{2}_{k}$, we obtain the following inequality
\begin{align*}
-\rho_{k}(1-2L^{2}\lambda^{2}_{k})\norm{Y_{k}-Z_{k}}^{2}\leq -\frac{1-2L\lambda_{k}}{2\rho_{k}(1+L\lambda_{k})}\norm{X_{k+1}-Z_{k}}^{2}+\frac{\rho_{k}\lambda^{2}_{k}(1-2L\lambda_{k})}{1+L\lambda_{k}}\norm{\ce_{k}}^{2}.
\end{align*}
Inserting the above inequality to \eqref{g-1} and using the same fashion in deriving \eqref{recur-Xk}, we arrive at 
\begin{align}
\notag
\norm{X_{k+1}-p}^{2}&\leq (1+\alpha_{k})\norm{X_{k}-p}^{2}-\alpha_{k}\norm{X_{k-1}-p}^{2}\\
&\notag+\Delta M_{k}+\Delta N_{k}(p,p^{\ast})-2\rho_{k}\lambda_{k}\inner{V(Y_{k})-V(p),Y_{k}-p}\\
&\notag+\alpha_{k}\norm{X_{k}-X_{k-1}}^{2}\left(2\alpha_{k}+\frac{3(1-\alpha_{k})}{2\rho_{k}(1+L\lambda_{k})}\right)\\
&-(1-\alpha_{k})\left(\frac{3}{2\rho_{k}(1+L\lambda_{k})}-1\right)\norm{X_{k+1}-X_{k}}^{2}. \label{g-2}
\end{align}
Invoking the monotonicity of $V$ and rearranging \eqref{g-2}, it follows that
\begin{align}
\notag &\|X_{k+1}-p\|^2+(1-\alpha_{k})\left(\tfrac{3}{2\rho_{k}(1+L\lambda_k)}-1\right)\|X_{k+1}-X_k\|^2-\alpha_{k}\|X_k-p\|^2 \\
\notag &\le \|X_k-p\|^2+(1-\alpha_k)\left(\tfrac{3}{2\rho_k(1+L\lambda_k)}-1\right)\|X_{k}-X_{k-1}\|^2-\alpha_k\|X_{k-1}-p\|^2+\Delta M_{k}+\Delta N_{k}(p,p^*) \\
\notag&+\underbrace{\left(2\alpha_k^2+(1-\alpha_k)\left(1-\tfrac{3(1-\alpha_k)}{2\rho_k(1+L\lambda_k)}\right)\right)}_{\tiny \le0}\|X_k-X_{k-1}\|^2 \\
\notag &\le \|X_k-p\|^2+(1-\alpha_k)\left(\tfrac{3}{2\rho_k(1+L\lambda_k)}-1\right)\|X_{k}-X_{k-1}\|^2-\alpha_k\|X_{k-1}-p\|^2+\Delta M_{k}+\Delta N_{k}(p,p^*).
\end{align}
We define $\beta_{k+1}$ as 
\begin{equation}
\notag\beta_{k+1}\eqdef (1-\alpha_{k})\left(\frac{3}{2\rho_{k}(1+L\lambda_{k})}-1\right)-(1-\alpha_{k+1})\left(\frac{3}{2\rho_{k+1}(1+L\lambda_{k+1})}-1\right), 
\end{equation}
and similarly with \eqref{beta}, we can show $\{\beta_k\}$ is non-increasing by choosing $\rho_k=\tfrac{3(1-\bar{\alpha})^2}{2(2\alpha_k^2-\alpha_k+1)(1+L\lambda_k)}$ and $\lambda_k \equiv \lambda$. Thus, $(1-\alpha_{k+1})\left(\tfrac{3}{2\rho_{k+1}(1+L\lambda_{k+1})}-1\right)\le(1-\alpha_{k})\left(\tfrac{3}{2\rho_{k}(1+L\lambda_k)}-1\right)$. Together with $\alpha_{k+1}\geq\alpha_{k}$, the last inequality gives
\begin{align}
\notag &\|X_{k+1}-p\|^2+(1-\alpha_{k+1})\left(\tfrac{3}{2\rho_{k+1}(1+L\lambda)}-1\right)\|X_{k+1}-X_k\|^2-\alpha_{k+1}\|X_k-p\|^2 \\
\notag &\le \|X_k-p\|^2+(1-\alpha_k)\left(\tfrac{3}{2\rho_k(1+L\lambda)}-1\right)\|X_{k}-X_{k-1}\|^2-\alpha_k\|X_{k-1}-p\|^2+\Delta M_{k}+\Delta N_{k}(p,p^*).
\end{align}
Recall that $\Delta N_{k}(p,p^{\ast})=\Delta N_{k}(p,0)+2\rho_{k}\lambda\inner{p^{\ast},p-Y_{k}}$. Hence, after setting $\Delta N_{k}(p,0)=\Delta N_{k}(p)$, rearranging the expression given in the previous display shows that
\begin{align*}
2\rho_{k}\lambda\inner{p^{\ast},Y_{k}-p}&\leq \left(\|X_k-p\|^2+(1-\alpha_k)\left(\tfrac{3}{2\rho_k(1+L\lambda)}-1\right)\|X_{k}-X_{k-1}\|^2-\alpha_k\|X_{k-1}-p\|^2\right)  \\
&- \left(\|X_{k+1}-p\|^2+(1-\alpha_{k+1})\left(\tfrac{3}{2\rho_{k+1}(1+L\lambda)}-1\right)\|X_{k+1}-X_k\|^2-\alpha_{k+1}\|X_k-p\|^2\right) \\
&+\Delta M_{k}+\Delta N_{k}(p).
\end{align*}
Summing over $k=1,\ldots,K$, we obtain
\begin{align*}
\sum_{k=1}^{K}2\rho_{k}\lambda\inner{p^{\ast},Y_k-p}&\leq \sum_{k=1}^{K}\left[\left(\|X_k-p\|^2+(1-\alpha_k)\left(\tfrac{3}{2\rho_k(1+L\lambda)}-1\right)\|X_{k}-X_{k-1}\|^2-\alpha_k\|X_{k-1}-p\|^2\right)\right.\\
&\left. - \left(\|X_{k+1}-p\|^2+(1-\alpha_{k+1})\left(\tfrac{3}{2\rho_{k+1}(1+L\lambda)}-1\right)\|X_{k+1}-X_k\|^2-\alpha_{k+1}\|X_k-p\|^2\right)\right] \\
    &+\sum_{k=1}^{K}\Delta M_{k}+\sum_{k=1}^{K}\Delta N_{k}(p)\\ 
    & \leq \|X_1-p\|^2+(1-\alpha_1)\left(\tfrac{3}{2\rho_1(1+L\lambda)}-1\right)\|X_{1}-X_{0}\|^2-\alpha_1\|X_{0}-p\|^2 \\
    &+\sum_{k=1}^{K}\Delta M_{k}+\sum_{k=1}^{K}\Delta N_{k}(p) \\
    &= (1-\alpha_1) \|X_1-p\|^2+\sum_{k=1}^{K}\Delta M_{k}+\sum_{k=1}^{K}\Delta N_{k}(p),
 \end{align*}
 where we notice $X_1=X_0$ in the last inequality.
 \qed
\end{proof}

Next, we derive a rate statement in terms of the gap function, using the averaged sequence 
\begin{equation}\label{eq:Xbar}
\bar{X}_K\eqdef \tfrac{\sum_{k=1}^{K}\rho_kY_{k}}{\sum_{k=1}^{K}\rho_k}.
\end{equation}

\begin{theorem}[{\bf Rate and oracle complexity under monotonicity of $V$}] 
\label{th:Oracle}
Consider the sequence $(X_{k})_{k\in\N}$ generated \ac{RISFBF}. Suppose Assumptions \ref{ass:exists}-\ref{ass:batch} hold. Suppose $m_k \triangleq \lfloor k^a\rfloor$ and $\lambda_k=\lambda \in (0,1/(2L))$ for every $k \in \N$ where $a > 1$. Suppose $(\alpha_k)_{k\in\N}$ is a non-decreasing sequence such that $0<\alpha_k\le\bar{\alpha}<1$, $\rho_k=\tfrac{3(1-\bar{\alpha})^2}{2(2\alpha_k^2-\alpha_k+1)(1+L\lambda)}$ for every $k \in\N$. Then the following hold for any $K\in\N$: 
\begin{itemize}
\item[{(i)}] $\Ex[\gap(\bar{X}_{K}\vert\setC)] \leq \scrO\left(\tfrac{1}{K}\right).$ 
\item[{(ii)}] Given $\eps>0$, define $K_{\eps}\eqdef\{k\in\N\vert\Ex[\gap(\bar{X}_{k}\vert\setC)]\leq\eps\}$, then $\sum_{k=1}^{K_{\eps}} m_k \leq \scrO\left(\tfrac{1}{\eps^{1+a}}\right).$    
\end{itemize}
\end{theorem}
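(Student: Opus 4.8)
The plan is to use Lemma \ref{lem:gap} as the workhorse and convert it into a bound on the restricted gap of the averaged iterate $\bar X_K$. Write $S_K\eqdef\sum_{k=1}^K\rho_k$, so that by the definition \eqref{eq:Xbar} of $\bar X_K$ one has $\sum_{k=1}^K\rho_k(Y_k-p)=S_K(\bar X_K-p)$ for every $p$. Then the left-hand side of \eqref{eq:gapbound1} equals $2\lambda S_K\inner{p^*,\bar X_K-p}$, and Lemma \ref{lem:gap} gives, pathwise and for every $(p,p^*)\in\gr(F)$,
\begin{equation}
\inner{p^*,\bar X_K-p}\leq\frac{1}{2\lambda S_K}\left[(1-\alpha_1)\norm{X_1-p}^2+\sum_{k=1}^K\Delta M_k+\sum_{k=1}^K\Delta N_k(p,0)\right].
\end{equation}
Crucially the right-hand side is independent of $p^*$, so I may take the supremum over $p^*\in F(p)$ at no cost and then over $p\in\setC$, obtaining a bound on $\gap(\bar X_K\vert\setC)$ that holds for each $\omega$.

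Next I would control the $p$-dependent pieces of the supremum. Since $\setC\subseteq\{x:\norm{x-x^s}\leq D\}$, the anchor term satisfies $(1-\alpha_1)\norm{X_1-p}^2\leq(1-\alpha_1)(\norm{X_1-x^s}+D)^2\eqdef C_D$, a deterministic constant. For the noise term, recall $\Delta N_k(p,0)=2\rho_k\lambda\inner{W_k,p-Y_k}$; setting $M_K\eqdef\sum_{k=1}^K\rho_k W_k$ one has $\sum_{k=1}^K\Delta N_k(p,0)=2\lambda\inner{M_K,p}-2\lambda\sum_{k=1}^K\rho_k\inner{W_k,Y_k}$, whose only $p$-dependence is the linear term $\inner{M_K,p}$. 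Taking the supremum over $p\in\setC$ yields $\sup_{p\in\setC}\inner{M_K,p}\leq\inner{M_K,x^s}+D\norm{M_K}$. Collecting these estimates gives the pathwise bound
\begin{equation}
\gap(\bar X_K\vert\setC)\leq\frac{1}{2\lambda S_K}\left[C_D+\sum_{k=1}^K\Delta M_k+2\lambda\inner{M_K,x^s}+2\lambda D\norm{M_K}-2\lambda\sum_{k=1}^K\rho_k\inner{W_k,Y_k}\right].
\end{equation}

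Now I take expectations and exploit the stochastic structure. By Assumption \ref{ass:SObound}(i) (conditional unbiasedness, which is in force here), $\Ex[W_k\vert\hat{\scrF}_k]=0$, so $\Ex[\inner{M_K,x^s}]=0$ and, since $Y_k$ is $\hat{\scrF}_k$-measurable, $\Ex[\inner{W_k,Y_k}]=0$. Because $(W_k)$ is a martingale difference sequence the cross terms vanish, whence $\Ex\norm{M_K}^2=\sum_{k=1}^K\rho_k^2\,\Ex\norm{W_k}^2\leq\bar\rho^2\sigma^2\sum_{k=1}^K m_k^{-1}$ with $\bar\rho\eqdef\sup_k\rho_k<\infty$; by Jensen this gives $\Ex\norm{M_K}\leq\bar\rho\sigma(\sum_{k\geq1}m_k^{-1})^{1/2}$, finite since $m_k=\lfloor k^a\rfloor$ with $a>1$ (Assumption \ref{ass:batch}). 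The variance bounds of \eqref{e:sigma} likewise yield $\Ex[\Delta M_k]\leq c\,\sigma^2/m_k$, so $\sum_{k=1}^K\Ex[\Delta M_k]$ is bounded uniformly in $K$. Hence the bracketed expectation is at most a constant $C$ independent of $K$. Finally, since $\alpha_k\leq\bar\alpha<1$ and $\lambda$ is fixed, $\rho_k$ is bounded below by some $\rho_{\min}>0$, so $S_K\geq\rho_{\min}K$; therefore $\Ex[\gap(\bar X_K\vert\setC)]\leq C/(2\lambda\rho_{\min}K)=\scrO(1/K)$, which is (i). For (ii), inverting this rate gives $K_\eps=\scrO(1/\eps)$, and then $\sum_{k=1}^{K_\eps}m_k\leq\sum_{k=1}^{K_\eps}k^a\leq\int_1^{K_\eps+1}t^a\,\dd t=\scrO(K_\eps^{1+a})=\scrO(\eps^{-(1+a)})$.

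The main obstacle is interchanging the supremum over $p\in\setC$ with the expectation in the presence of the $p$-dependent martingale term $\sum_k\Delta N_k(p,0)$. The resolution is to isolate the part of this term that is linear in $p$, namely $2\lambda\inner{M_K,p}$, bound the remaining $p$-dependence by the diameter constant $C_D$, and only then take the supremum; this converts the troublesome $\sup_p$ into the single scalar $\norm{M_K}$, whose expectation is controlled by martingale orthogonality together with the summability $\sum_k\rho_k^2/m_k<\infty$ guaranteed by $a>1$.
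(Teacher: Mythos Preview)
Your proposal is correct and follows essentially the same route as the paper: starting from Lemma~\ref{lem:gap}, taking the supremum over $(p,p^*)$ after making the right-hand side $p$-independent, and then taking expectations using the martingale structure of $W_k$. The only cosmetic difference is in how the $p$-dependent noise term $\sum_k\Delta N_k(p,0)$ is neutralized. The paper introduces an auxiliary process $\Psi_{k+1}=\Psi_k+\rho_k\lambda W_k$ with $\Psi_1\in\setC$ and telescopes $\norm{\Psi_k-p}^2$, which replaces the $p$-dependence by the constant $\norm{\Psi_1-p}^2\leq 4D^2$ plus the terms $\sum_k\rho_k^2\lambda^2\norm{W_k}^2$ and $\sum_k 2\rho_k\lambda\inner{W_k,\Psi_k-Y_k}$ (the latter with zero mean). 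You instead isolate the linear-in-$p$ piece $2\lambda\inner{M_K,p}$ directly and bound $\sup_{p\in\setC}\inner{M_K,p}$ by $\inner{M_K,x^s}+D\norm{M_K}$, controlling $\Ex\norm{M_K}$ via martingale orthogonality. Since $\Psi_k-\Psi_1=\lambda M_{k-1}$, these are two packagings of the same idea; your version is marginally more elementary as it avoids naming an auxiliary sequence, while the paper's version is the standard ``robust SA'' device of \cite{JudLanNemSha09}.
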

The proof of this Theorem builds on an idea which is frequently used in the analysis of stochastic approximation algorithms, and can at least be traced back to the robust stochastic approximation approach of \cite{JudLanNemSha09}. In order to bound the expectation of the gap function, we construct an auxiliary process which allows us to majorize the gap via a quantity which is independent of the reference points. Once this is achieved, a simple variance bound completes the result. 

\begin{proof}[Proof of Theorem \ref{th:Oracle}]
We define an auxiliary process $(\Psi_{k})_{k\in\N}$ such that 
\begin{equation}
\Psi_{k+1}\eqdef \Psi_{k}+\rho_k\lambda_k W_{k},\quad \Psi_{1}\in\dom(T).
\end{equation}
Then, 
\begin{align*}
\norm{\Psi_{k+1}-p}^{2}&=\norm{(\Psi_{k}-p)+\rho_{k}\lambda_{k}W_{k}}^{2}=\norm{\Psi_{k}-p}^{2}+\rho^{2}_{k}\lambda_{k}^{2}\norm{W_{k}}^{2}+2\rho_{k}\lambda_{k}\inner{\Psi_{k}-p,W_{k}},
\end{align*}
so that
\[
2\rho_{k}\lambda_{k}\inner{W_{k},p-\Psi_{k}}=\norm{\Psi_{k}-p}^{2}-\norm{\Psi_{k+1}-p}^{2}+\rho^{2}_{k}\lambda_{k}^{2}\norm{W_{k}}^{2}.
\]
Introducing the iterate $Y_{k}$, the above implies
\begin{align*}
2\rho_{k}\lambda_{k}\inner{W_{k},p-Y_{k}}&=2\rho_{k}\lambda_{k}\inner{W_{k},p-\Psi_{k}}+2\rho_{k}\lambda_{k}\inner{W_{k},\Psi_{k}-Y_{k}}\\
&=\norm{\Psi_{k}-p}^{2}-\norm{\Psi_{k+1}-p}^{2}+\rho^{2}_{k}\lambda_{k}^{2}\norm{W_{k}}^{2}+2\rho_{k}\lambda_{k}\inner{W_{k},\Psi_{k}-Y_{k}}.
\end{align*}
As $\Delta N_{k}(p)=2\rho_{k}\lambda_{k}\inner{W_{k},p-Y_{k}}$, this implies via a telescopian sum argument 
\begin{equation}\label{eq:boundN}
\sum_{k=1}^{K}\Delta N_{k}(p)\leq \norm{\Psi_{1}-p}^{2}+\sum_{k=1}^{K}\rho^{2}_{k}\lambda^{2}_{k}\norm{W_{k}}^{2}+\sum_{k=1}^{K}2\rho_{k}\lambda_{k}\inner{W_{k},\Psi_{k}-Y_{k}}.
\end{equation}
Using Lemma \ref{lem:gap} and setting $\lambda_k \equiv \lambda$, for any $(p,p^{\ast})\in\gr(F)$ it holds true that 
\begin{align*}
\sum_{k=1}^{K}2\rho_{k}\lambda\inner{p^{\ast},Y_{k}-p}&\leq(1-\alpha_{1})\norm{X_{1}-p}^{2}+\sum_{k=1}^{K}\Delta M_{k}+\sum_{k=1}^{K}\Delta N_{k}(p).
\end{align*}
Define $\const_{1}\eqdef(1-\alpha_{1})\norm{X_{1}-p}^{2}$, divide both sides by $\sum_{k=1}^{K}\rho_{k}$ and using our definition of an ergodic average \eqref{eq:Xbar}, this gives
\[
2\lambda\inner{p^{\ast},\bar{X}_{K}-p}\leq \frac{1}{\sum_{k=1}^{K}\rho_{k}}\left\{\const_{1}+\sum_{k=1}^{K}\Delta M_{k}+\sum_{k=1}^{K}\Delta N_{k}(p)\right\}.
\]
Using the bound established in eq. \eqref{eq:boundN}, it follows 
\begin{align*}
2\lambda\inner{p^{\ast},\bar{X}_{K}-p}\leq \frac{1}{\sum_{k=1}^{K}\rho_{k}}\left\{\const_{1}+\sum_{k=1}^{K}\Delta M_{k}+\norm{\Psi_{1}-p}^{2}+\sum_{k=1}^{K}\rho^{2}_{k}\lambda^{2}\norm{W_{k}}^{2}+\sum_{k=1}^{K}2\rho_{k}\lambda\inner{W_{k},\Psi_{k}-Y_{k}}\right\}.
\end{align*}
Choosing $\Psi_{1},p\in\setC$ and introducing $\const_{2}\eqdef \const_{1}+4D^{2}$, we see that the above can be bounded by a random quantity which is independent of $p$: 
\begin{align*}
2\lambda\inner{p^{\ast},\bar{X}_{K}-p}\leq \frac{1}{\sum_{k=1}^{K}\rho_{k}}\left\{\const_{2}+\sum_{k=1}^{K}\Delta M_{k}+\sum_{k=1}^{K}\rho^{2}_{k}\lambda^{2}\norm{W_{k}}^{2}+\sum_{k=1}^{K}2\rho_{k}\lambda_{k}\inner{W_{k},\Psi_{k}-Y_{k}}\right\}.
\end{align*}
Taking the supremum over pairs $(p,p^{\ast})$ such that $p\in\scrC$ and $p^{\ast}\in F(y)$, it follows 
\begin{equation}\label{eq:gap1}
2\lambda\gap(\bar{X}_{K}\vert\setC)\leq \frac{\const_{2}}{\sum_{k=1}^{K}\rho_{k}}+\frac{\sum_{k=1}^{K}\Delta M_{k}+\sum_{k=1}^{K}\rho^{2}_{k}\lambda^{2}\norm{W_{k}}^{2}+\sum_{k=1}^{K}2\rho_{k}\lambda_{k}\inner{W_{k},\Psi_{k}-Y_{k}}}{\sum_{k=1}^{K}\rho_{k}}
\end{equation}
In order to proceed, we bound the first moment of the process $\Delta M_{k}$ in the same way as in \eqref{eq:boundM}, in order to get 
\begin{align*}
\Ex[\Delta M_{k}\vert\scrF_{k}]&\leq \frac{6\rho_{k}\lambda_{k}^2}{1+L\lambda_{k}}\Ex[\norm{W_{k}}^{2}\vert\scrF_{k}]+\lambda^{2}_{k}\left(\frac{6\rho_{k}}{1+L\lambda_{k}}+\frac{\rho_{k}\lambda^{2}_{k}}{2}\right)\Ex[\norm{U_{k}}^{2}\vert\scrF_{k}]\\
&= \frac{\left(\frac{12\rho_{k}\lambda^{2}_{k}}{1+L\lambda_{k}}\sigma^{2}+\frac{\rho_{k}\lambda^{2}_{k}}{2}\sigma^{2}\right)}{m_{k}}\eqdef\frac{\ca_{k}\sigma^{2}}{m_{k}}.
\end{align*}
Next, we take expectations on both sides of inequality \eqref{eq:gap1}, and use the bound \eqref{e:sigma}, and $\Ex[\inner{W_{k},\Psi_{k}-Y_{k}}]=\Ex\left[\Ex\left(\inner{W_{k},\Psi_{k}-Y_{k}}\vert\hat{\scrF}_{k}\right)\right]=0.$ This yields
\begin{align*}
2\lambda\Ex\left[\gap(\bar{X}_{K}\vert\setC)\right]\leq \frac{\const_{2}}{\sum_{k=1}^{K}\rho_{k}}+\frac{1}{\sum_{k=1}^{K}\rho_{k}}\left(\sum_{k=1}^{K}\frac{\ca_{k}\sigma^{2}}{m_{k}}+\sum_{k=1}^{K}\rho^{2}_{k}\lambda^{2}\frac{\sigma^{2}}{m_{k}}\right).
\end{align*}
Since $\alpha_{k}\uparrow\bar{\alpha}\in(0,1)$, we know that $\rho_{k}\geq \tilde{\rho}\eqdef\frac{3(1-\bar{\alpha}^{2})}{2(1+L\lambda)(2\bar{\alpha}^{2}+1)}$. Similarly, since $2\alpha^{2}_{k}-\alpha_{k}+1\geq 7/8$ for all $k$, it follows $\rho_{k}\leq\bar{\rho}\eqdef\frac{12(1-\bar{\alpha})^{2}}{7}$. Using this upper and lower bound on the relaxation sequence, we also see that $\ca_{k}\leq \lambda^{2}\left(\frac{12\bar{\rho}}{1+L\lambda}+\frac{\bar{\rho}}{2}\right)\equiv\bar{\ca}$, so that 
\begin{align*}
2\lambda\Ex\left[\gap(\bar{X}_{K}\vert\setC)\right]\leq \frac{\const_{2}}{\tilde{\rho}K}+\frac{1}{\tilde{\rho}K}\left(\bar{\ca}\sigma^{2}+\bar{\rho}^2\lambda^{2}\sigma^{2}\right)\sum_{k=1}^{K}\frac{1}{m_{k}}\leq\frac{\const_{3}}{K}
\end{align*}
where $\const_{3}\eqdef \frac{\const_{2}}{\tilde{\rho}}+\frac{1}{\tilde{\rho}}\left(\bar{\ca}\sigma^{2}+\bar{\rho}\lambda^{2}\sigma^{2}\right)\sum_{k=1}^{\infty}\frac{1}{m_{k}}$. Hence, defining the deterministic stopping time $K_{\eps}=\{k\in\N\vert\Ex[\gap(\bar{X}_{k}\vert\setC)]\leq\eps\}$, we see $K_{\eps}\geq\frac{\const_{3}}{2\lambda \eps}=\frac{\const_{4}}{\eps}$. 

\noindent (ii). Suppose $m_k=\lfloor k^a\rfloor$, for $a>1$. Then the oracle complexity to compute an $\bar{X}_K$ such that $\Ex[\gap(\bar{X}_{k}\vert\setC)] \leq \epsilon$ is bounded as 
\begin{align*}
\sum_{k=1}^Km_k&\le\sum_{k=1}^{\lceil(\const_{4}/\eps)\rceil}m_k\le\sum_{k=1}^{\lceil(\const_{4}/\eps)\rceil}k^a\le\int_{k=1}^{(\const_{4}/\eps)+1}x^a dx\le\tfrac{((\const_{4}/\eps)+1)^{a+1}}{a+1}\le\left(\tfrac{\const_{4}}{\eps^{a+1}}\right).
\end{align*}
\qed
\end{proof}

\begin{remark} {In the prior result, we employ a sampling rate $m_k = \lfloor k^a \rfloor$ where $a > 1$. This achieves the optimal rate of convergence. In contrast, the authors in~\cite{IusJofOliTho17} employ a sampling rate, loosely given by $m_k = \lfloor k^{1+a} (\ln(k))^{1+b} \rfloor$ where $a > 0, b \geq -1$ or $a = 0, b > 0$. We observe that when $a > 0$ and $b \geq -1$, the mini-batch size  grows faster than our proposed $m_k$ while it is comparable in the other case.}
\end{remark}

%%%%%%%%%%%%%%%%%%%%%%%%%%%%%%%%%%%%%%%%%%%%%%
%%%%%%Applications%%%%%%%%%%%%%
\section{Applications}
\label{sec:Applications}
%%% applications
%--------------------------------------------------------------------
% !TEX root = ./RISFBF_MatProg.tex
%
In this section, we compare the proposed scheme with its SA counterparts on a class of monotone two-stage stochastic variational inequality problems (Sec.~\ref{sec:5.1}) and a supervised learning problem (Sec.~\ref{sec:5.2}) and discuss the resulting performance. 

\subsection{Two-stage stochastic variational inequality problems}\label{sec:5.1}
In this section, we describe some preliminary computational results obtained from the (RISFBF) method when applied to a class of two-stage stochastic variational inequality problems, recently introduced by Rockafellar and
Wets~\cite{rockafellar17stochastic}. 
 
Consider an imperfectly competitive market with $N$ firms playing a two-stage game. In the first stage, the firms decide upon their capacity level $x_{i}\in[l_{i},u_{i}]$, anticipating the expected revenues to be obtained in the second stage in which they compete by choosing quantities \`{a} la Cournot. The second-stage market is characterized by uncertainty as the per-unit cost $h_{i}(\xi_{i})$ is realized on the spot and cannot be anticipated. To compute an equilibrium in this game, we assume that each player is able to take stochastic recourse by determining production levels $y_i(\xi)$, contingent on random convex costs and capacity levels $x_i$. In order to bring this into the terminology for our problem, let use define the feasible set for capacity decisions of firm $i$ as $\scrX_{i}\eqdef[l_{i},u_{i}]\subset\R_{+}$. The joint profile of capacity decisions is denoted by an $N$-tuple $x=(x_{1},\ldots,x_{N})\in\scrX\eqdef\prod_{i=1}^{N}\scrX_{i}=\scrX$. The capacity choice of player $i$ is then determined as a solution to the parametrized problem (Play$_i(x_{-i})$) 
\begin{align}\tag{\mbox{Play$_i(x_{-i})$}}
	\min_{x_i \in \mathcal{X}_i} \, c_i(x_i) -\left( p(X)x_i - \Ex_{\xi}[\scrQ_i(x_i,\xi)]\right),
\end{align}
where $c_i: \scrX_i \to \Real_+$ is a $\tilde{L}^c_i$-smooth and convex cost function and $p(\cdot)$ denotes the
inverse-demand function defined as $p(X)= d-rX$,  $d, r > 0$. The function $\scrQ_i(\cdot,\xi)$ denotes the optimal cost function of firm $i$ in scenario $\xi\in\Xi$, assuming a value $\scrQ_i(x_i,\xi)$ when the capacity level $x_{i}$ is chosen. The recourse function $\Ex_{\xi}[\scrQ_i(\us{\cdot},\xi)]$ denotes the expectation of the optimal value of the player $i$'s second stage problem and is defined as
\begin{align}
    \tag{\mbox{Rec$_i(x_{-i})$}}
\scrQ_i(x_i,\xi) &\eqdef \min\{h_{i}(\xi)y_{i}(\xi)\vert y_{i}(\xi)\in[0,x_{i}]\}\\
&=\max\{\pi_{i}(\xi)x_{i}\vert \pi_{i}(\xi)\leq 0,h_{i}(\xi)-\pi_{i}(\xi)\geq 0\}.\nonumber
    \end{align}

A Nash equilibrium of this game is given by a tuple $(x^{\ast}_1, \cdots, x^{\ast}_N)$ where $ x^{\ast}_i \mbox{ solves } (\mbox{Play}_i(x_{-i}^\ast))$ for each $i=1,2,\ldots,N$. A simple computation shows that $Q_{i}(x_{i},\xi)=\min\{0,h_{i}(\xi)x_{i}\}$, and hence it is nonsmooth. In order to obtain a smoothed variant, we introduce $\scrQ_i^{\epsilon}(\cdot,\xi_i)$, defined as 
\[
\scrQ^{\epsilon}_i(x_i,\xi)\eqdef  \max\{ x_i \pi_i(\xi)  - \tfrac{\epsilon}{2} (\pi_i(\xi))^2\vert \pi_i(\xi) \leq 0, \pi_i(\xi) \leq h_i(\xi)\},\quad \epsilon>0.
\]
This is the value function of  a quadratric program, requiring the maximization of an $\epsilon$-strongly concave function. Hence, $\scrQ_{i}^{\epsilon}(x_{i},\xi)$ is single-valued and $\nabla_{x_i}\scrQ^{\epsilon}_i(\cdot,\xi)$ is $\tfrac{1}{\epsilon}$-Lipschitz and $\epsilon$-strongly monotone \cite[][Prop. 12.60]{RocWet98} for all $\xi\in\Xi$. The latter is explicitly given by 
\[
\nabla_{x_i}\scrQ^{\epsilon}_i(x_i,\xi)\eqdef \argmax\{x_i \pi_i(\xi)  - \tfrac{\epsilon}{2} (\pi_i(\xi))^2 \vert  \pi_{i}(\xi)\leq 0,\pi_i(\xi) \leq h_i(\xi)\}.
\]
Employing this smoothing strategy in our two-stage noncooperative game yields the individual decision problem
 \begin{align}\tag{\mbox{Play$^{\epsilon}_i(x_{-i})$}}
 (\forall i\in\{1,\ldots,N\}):   \min_{x_i \in \mathcal{X}_i} \ c_i(x_i) - p(X)x_i + \Ex_{\xi}[\scrQ^{\epsilon}_i(x_i,\xi)].
\end{align}
The necessary and sufficient equilibrium conditions of this $\epsilon$-smoothed game can be compactly represented by the inclusion problem (SGE$^{\epsilon}$)
\begin{align}\tag{SGE$^\epsilon$}
0 \in F^\epsilon(x) \triangleq V^{\epsilon}(x)+T(x), \mbox{ where } V^{\epsilon}(x) = C(x) + R(x) + D^{\epsilon}(x), T(x) =  \NC_{\mathcal X}(x), 
\end{align}
and $C$, $R$, and $D^\epsilon$ are single-valued maps given by 
$$ 
C(x) \eqdef \left(\begin{array}{c}c_1'(x_1) \\
	\vdots \\
    c_N'(x_N)\end{array}\right),\quad R(x) \eqdef r(X\text{\textbf{1}}+ x) - d, \text{ and } D^{\epsilon} (x) \eqdef \left(\begin{array}{c} \Ex_{\xi}[\nabla_{x_1}\mathcal{Q}_1^{\epsilon}(x_1,\xi)] \\
	\vdots \\
\Ex_{\xi}[\nabla_{x_N}\scrQ_N^{\epsilon}(x_N,\xi)]\end{array}\right). 
$$
We note that the interchange between the expectation and the gradient operator can be invoked based on smoothness requirements (cf.~\cite[Th.~7.47]{DenRusSha09}). The problem (SGE$^{\epsilon}$) aligns perfectly with the structured inclusion \eqref{eq:MI}, in which $T$ is a maximal monotone
map and $V$ is an expectation-valued maximally monotone map. In addition, we can quantify the Lipschitz constant of $V$ as $L_V = L_C + L_R
+ L_D^{\epsilon}$ ,where  $L_C = \max_{1\leq i\leq N} \tilde{L}^c_i$, $L_R = r\norm{\Id + \text{\textbf{1}}\text{\textbf{1}}^{\top}}_2 = r(N+1)$ and $L_D^{\epsilon} = \tfrac{1}{\epsilon}$. Here, $\Id$ is the $N\times N$ identity matrix, and $\text{\textbf{1}}$ is the $N\times 1$ vector consisting only of ones.  

\vspace{0.2cm}

\noindent {\em Problem parameters for 2-stage SVI.} \us{Our numerics are based on specifying $N=10$, $r = 0.1$,  and $d = 1$. We consider four problem settings of $L_V$ ranging from $10, \cdots, 10^4$ (See Table~\ref{cpb1}). For each setting, the problem parameters are defined as follows.}
\begin{enumerate}
    \item[(i)] \us{{\em Specification of $h_i(\xi)$.} The cost parameters $h_i(\xi_i) \triangleq \xi_i$ where $\xi_i \sim \Uni[-5,0]$ and $i = 1, \cdots, N$.} 
    \item[(ii)] \us{{\em Specification of $L_V, L_R,$ $L_D^{\epsilon}$, $L_C$, and $\hat{b}_1$.} Since $\norm{\Id + 11^T}_2 = 11$ when $N = 10$,  $L_R = r \norm{\Id + \text{\textbf{1}}\text{\textbf{1}}^{\top}} = 1.1$. Let $\epsilon$ be defined as $\epsilon=\frac{10}{L_{V}}$ and $L_D^{\epsilon} = \tfrac{1}{\epsilon} = \tfrac{L_V}{10}$. It follows that $L_C = L_V-L_R-L_D^{\epsilon}$ and  $\hat{b}_1= L_C$.}
    \item[(iii)] \us{{\em Specification of $c_i(x_i)$.} The cost function $c_i$ is defined as $c_i(x_i) = \tfrac{1}{2}\hat{b}_i x_i^2+a_i{x_i}$ where $a_1, \cdots, a_N \sim \Uni[2,3]$  and  $\hat{b}_2, \cdots, \hat{b}_N \sim \Uni[0,\hat{b}_1].$ Further, $a \triangleq [a_1,\dots,a_N]^T\in\Real^N$ and $B\eqdef \diag(\hat{b}_1,\dots,\hat{b}_N)$ is a diagonal matrix with
nonnegative elements.}   
 
\end{enumerate}

\noindent {\em Algorithm specifications.}
We compare (RISFBF) with a standard stochastic approximation (SA) scheme and a stochastic forward-backward-forward (SFBF) scheme. Solution quality is compared by estimating the residual function  $\residual(x)=\|x-\Pi_\mathcal{X}(x-\lambda V^\epsilon(x))\|$.  All of the schemes were implemented in \texttt{MATLAB} on a PC with 16GB RAM and 6-Core Intel Core i7 processor
(2.6GHz). \\
\smallskip

\noindent {(i) {\bf (SA)}: Stochastic approximation scheme.}  {The {\bf (SA)} scheme utilizes update (SA).
\vspace{-.2cm}
\begin{align}
\tag{SA} X_{k+1}:= \Pi_{\scrX} \left[X_k - \lambda_k \widehat{V}^{\epsilon}(X_k,\xi_k)\right],
\end{align}
where $V^{\epsilon}(X_k) = \Ex_{\xi}[\widehat{V}^{\epsilon}(X_k,\xi)]$ and $\lambda_k = \tfrac{1}{\sqrt{k}}$. The operator $\Pi_{\scrX}[\cdot]$ means the orthogonal projection onto the set $\scrX$. Note that $x_0$ is randomly generated in $[0,1]^N$. 

\noindent {(ii) {\bf (SFBF)}: Variance-reduced stochastic modified forward-backward scheme.
\begin{equation}\tag{SFBF}
\left\{\begin{array}{l} 
Y_{k}=  \Pi_\mathcal{X}[X_k - \lambda_k \us{A_{k}(X_{k})}], \\
X_{k+1}=Y_{k}-\lambda_k(\us{B_{k}(Y_k)-A_{k}(X_{k})}).
\end{array}\right.
\end{equation}
\us{where $A_{k}(X_{k})=\frac{1}{m_{k}}\sum_{t=1}^{m_{k}}\widehat{V}^{\epsilon}(X_k,\xi_k)$, $B_{k}(Y_k)=\frac{1}{m_{k}}\sum_{t=1}^{m_{k}}\widehat{V}^{\epsilon}(Y_k,\eta_k)$}. We choose a constant $\lambda_k\equiv\lambda=\tfrac{1}{4L_V}$. We assume
$m_k=\lfloor k^{1.01}\rfloor$ for merely monotone problems and $m_k=\lfloor 1.01^{k}\rfloor$ for
strongly monotone problems. \\
 
\noindent {(iii) {\bf (RISFBF)}: Relaxed inertial stochastic
forward-backward-forward scheme. We choose a
constant steplength $\lambda_k\equiv\lambda=\tfrac{1}{4L_V}$. In merely monotone settings, we utilize an increasing
sequence $\alpha_k=\alpha_0(1-\tfrac{1}{k+1})$, where $\alpha_0=0.1$, the
relaxation parameter sequence $\rho_k$ defined as 
$\rho_k=\tfrac{3(1-\alpha_0)^2}{2(2\alpha_k^2-\alpha_k+1)(1+L_V\lambda)}$, and 
$m_k=\lfloor k^{1.01}\rfloor$. In strongly monotone regimes, we choose a constant inertial
parameter $\alpha_k\equiv\alpha=0.1$, a constant relaxation parameter
$\rho_k\equiv\rho=1$, and $m_k=\lfloor 1.01^k\rfloor$.

\medskip 

\noindent In Fig.~\ref{pro1}, we compare the three schemes under maximal monotonicity and
strong monotonicity, respectively and examine their senstivities to inertial
and relaxation parameters. \us{Both sets of plots are based on selecting $L_V=10^2$.}

\begin{figure}[htbp]
\includegraphics[width=.48\textwidth,height=5.2cm]{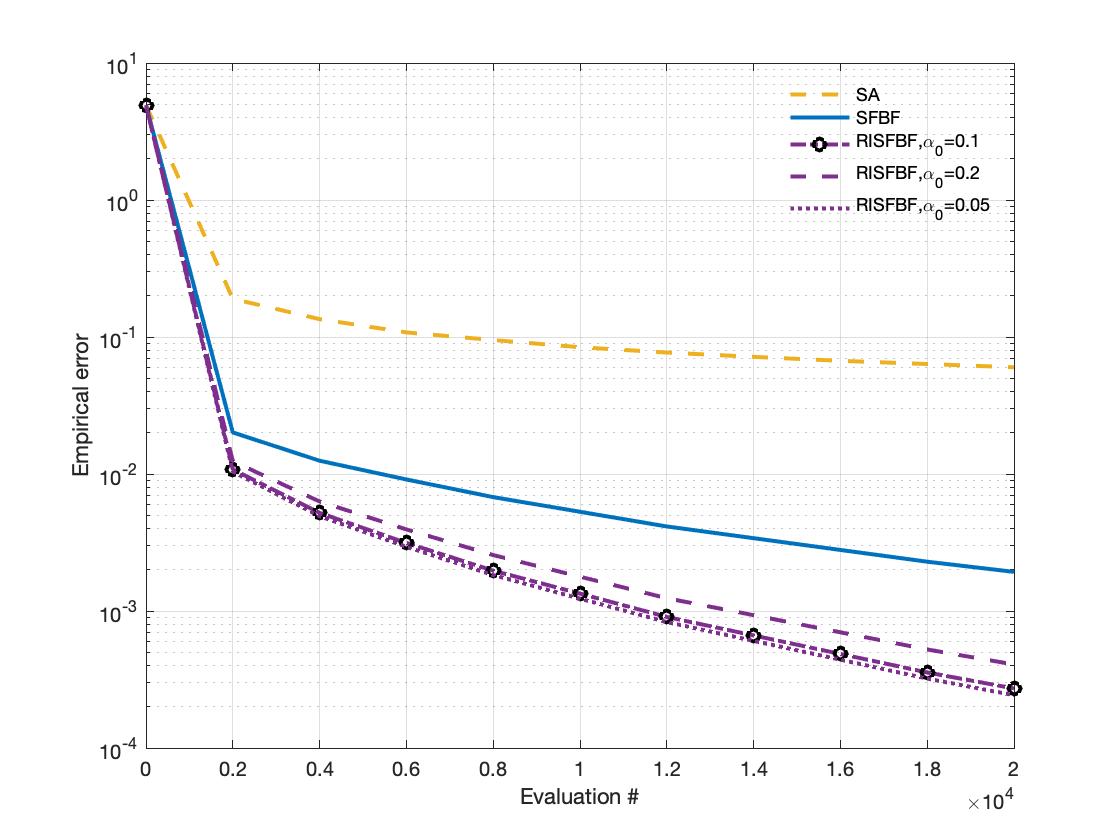}\hfill
\includegraphics[width=.48\textwidth,height=5.2cm]{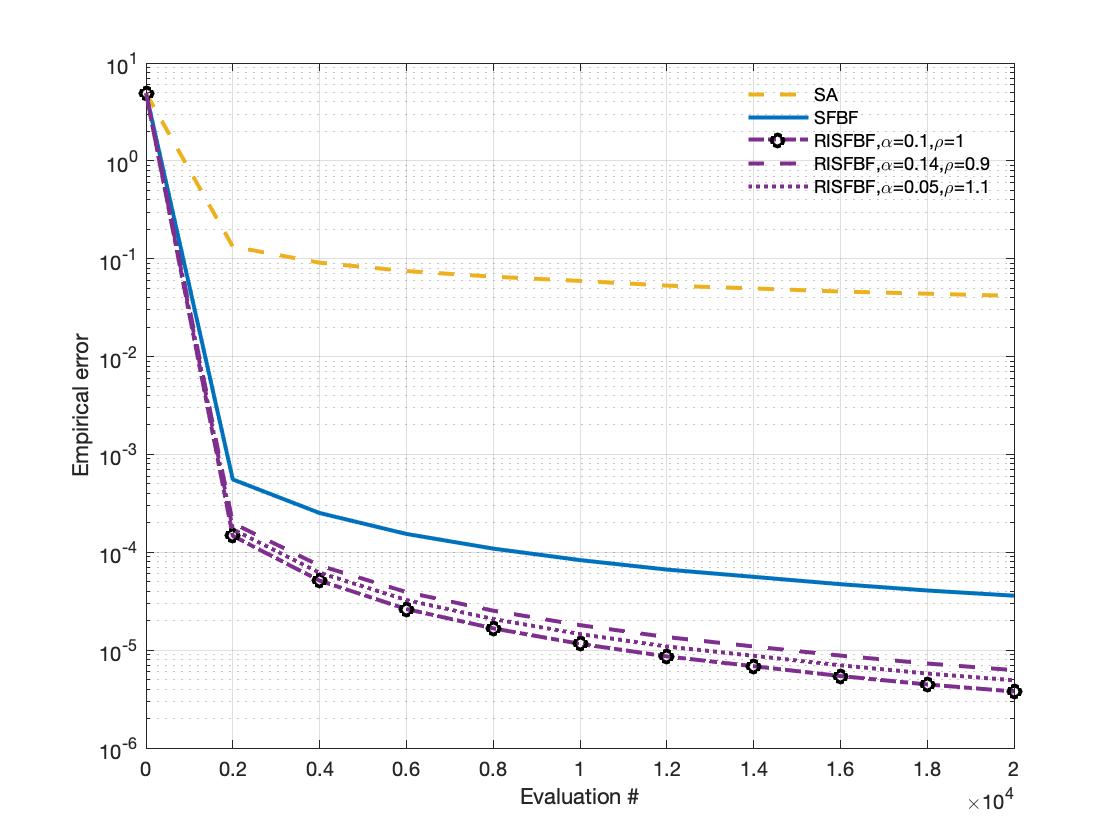}
\caption{Trajectories for (SA), (SFBF), and (RISFBF)  (left: monotone, right: s-monotone)}
\label{pro1}
\vspace{-0.2in}
\end{figure}

\begin{table}[h]
\scriptsize
\caption{Comparison of ({\bf RISFBF}) with (SA) and (SFBF) under various Lipschitz constant}
%\vspace{-0.3in}
\begin{center}
    \begin{tabular}[t]{ c | c | c | c | c | c | c | c | c | c}
    \hline
    \multicolumn{10}{c} {merely monotone, 20000 evaluations} \\ \hline
    \multirow{2}{*}{$L_V$} & \multicolumn{3}{|c|} {RISFBF} & \multicolumn{3}{c|} {SFBF} & \multicolumn{3}{c} {SA} \\ \cline{2-10}
      & error & time & CI & error & time & CI & error & time & CI \\ \hline
      1e1 & 2.2e-4 & 2.7 & [2.0e-4,2.5e-4] & 1.6e-3 & 2.6 & [1.3e-3,1.8e-3] & 5.3e-2 & 2.7 & [5.0e-2,5.7e-2] \\ \hline
      1e2 & 2.7e-4 & 2.7 & [2.5e-4,3.0e-4] & 1.9e-3 & 2.6 & [1.6e-3,2.1e-3] & 6.1e-2 & 2.7 & [5.8e-2,6.4e-2]  \\ 
 \hline
      1e3 & 6.9e-4 & 2.7 & [6.7e-3,7.1e-4] & 2.2e-3 & 2.6 & [2.0e-3,2.5e-3] & 7.6e-2 & 2.5 & [7.3e-2,7.9e-2] \\ 
 \hline
 1e4 & 2.7e-3  & 2.7 & [2.5e-3,3.0e-3] & 5.9e-3 & 2.6 & [5.4e-3,6.2e-3] & 9.4e-2 & 2.6 & [9.0e-1,9.7e-1]\\ 
 \hline
 \hline
     \hline
    \multicolumn{10}{c} {strongly monotone, 20000 evaluations} \\ \hline
    \multirow{2}{*}{$L_V$} & \multicolumn{3}{|c|} {RISFBF} & \multicolumn{3}{c|} {SFBF} & \multicolumn{3}{c} {SA} \\ \cline{2-10}
      & error & time & CI & error & time & CI & error & time & CI \\ \hline
     1e1 & 1.5e-6 & 2.6 & [1.3e-6,1.7e-6] & 1.5e-5 & 2.6 & [1.2e-5,1.7e-5] & 2.9e-2 & 2.5 & [2.7e-2,3.1e-2]  \\ \hline
      1e2 & 3.7e-6 & 2.6 & [3.5e-6,3.9e-6] & 3.6e-5 & 2.5 & [3.3e-5,3.9e-5] & 4.1e-2 & 2.5 & [3.8e-2,4.4e-2] \\ 
 \hline
      1e3 & 4.5e-6 & 2.6 & [4.3e-6,4.7e-6] & 5.6e-5 & 2.5 & [4.2e-6,4.7e-6] & 5.5e-2 & 2.4 & [5.2e-2,5.7e-2] \\ 
 \hline
      1e4 & 1.4e-5 & 2.6 & [1.1e-5,1.7e-5] & 7.4e-5 & 2.5 & [7.1e-5,7.7e-5] & 6.0e-2 & 2.5 & [5.7e-2,6.3e-2] \\ 
 \hline
\end{tabular}
\end{center}
\label{cpb1}
\end{table}

\vspace{0.2cm}
\noindent {\bf Key insights.} Several insights may be drawn from Table~\ref{cpb1} and Figure~\ref{pro1}.\\ 

\noindent (a) First, from Table~\ref{cpb1}, one may conclude that on this class of problems,  (RISFBF) and (SFBF) significantly outperform (SA) schemes,
which is less surprising given that both schemes employ an increasing
mini-batch sizes, leading to performance akin to that seen in deterministic
schemes.  {We should note that when $\mathcal{X}$ is somewhat more complicated, the difference in run-times between SA schemes and mini-batch variants becomes for more pronounced; in this instance, the set $\mathcal{X}$ is relatively simple to project onto  and there is little difference in run-time across the three schemes.} \\

\noindent (b) Second, we observe that  while both (SFBF) and (RISFBF) schemes can
contend with poorly conditioned problems, as \us{seen} by noting that as $L_V$
grows, \us{their} performance does not degenerate \us{significantly} in terms of empirical error; However, in both
monotone and strongly monotone regimes, (RISFBF) provide consistently better solutions in terms of empirical error over (SFBF). Figure~\ref{pro1} displays the range of
trajectories obtained for differing relaxation and inertial parameters and in
the instances considered, (RISFBF) shows consistent benefits over (SFBF).   \\

\noindent (c) Third, since such schemes display geometric rates of convergence
for strongly monotone inclusion problems, this improvement is
reflected in terms of the empirical \us{errors} for strongly monotone vs monotone regimes.    

\subsection{Supervised learning with group variable selection}\label{sec:5.2}
Our second numerical example considers the following population risk formulation of a composite absolute penalty (CAP) problem arising in supervised statistical learning~\cite{RocYuZha09}
\begin{equation}\label{eq:CAP}\tag{CAP}
\min_{w\in \scrW} \tfrac{1}{2}\Ex_{(a,b)}[( a^{\top}w-b)^2]+\eta\sum_{\textsl{g}\in \scrS} \|w_\textsl{g}\|_2,
\end{equation}}
\noindent where the feasible set $\scrW\subseteq\R^{d}$ is a Euclidean ball with $\scrW \triangleq \{w
\in \R^{d}\mid \|w\|_2 \le D\}$, $\xi=(a,b)\in\R^{d}\times\R$ denotes the random variable consisting of a set of predictors $a$ and output $b$. The parameter vector $w$ is the sparse linear hypothesis to be learned. The sparsity structure of $w$ is represented by group
$\scrS \in 2^{\{1,\dots,l\}}$. When the groups in $\scrS$ do not
overlap, $\sum_{\textsl{g}\in \us{\scrS}} \|w_\textsl{g}\|_2$ is referred to
as the group lasso penalty~\cite{FriHasTib01,JacOboVer09}. When the groups in $\scrS$
form a partition of the set of predictors, then $\sum_{\textsl{g}\in\scrS} \|w_\textsl{g}\|_2$ is a norm afflicted by singularities when
some components $w_\textsl{g}$ are equal to zero. For any $\textsl{g} \in \{1, \cdots,
l\}$, $w_{\textsl{g}}$ is a sparse vector constructed by components of
$x$ whose indices are in $\textsl{g}$, i.e., $w_\textsl{g} :=
(w_i)_{i\in\textsl{g}}$ with few non-zero components in
$w_\textsl{g}$. Here, we assume that each group $\textsl{g} \in \mathcal{S}$
consists of $k$ elements. Introduce the linear operator $L:\R^{d}\to\R^{k}\underbrace{\times\cdots\times}_{l-\text{times}}\R^{k}$, given by $Lw=[\eta w_{g_{1}},\ldots,\eta w_{g_{l}}]$. Let us also define 
\begin{align*}
 &Q=\Ex_{\xi}[aa^{\top}],q=\Ex_{\xi}[ab],c=\frac{1}{2}\Ex_{\xi}[b^{2}],  \\
 &h(w)\eqdef\frac{1}{2}w^{\top}Qw-w^{\top}q+c, \text{ and } f(w)\eqdef\delta_{\scrW}(w),
 \end{align*}
where $\delta_{\scrW}(\cdot)$ denotes the indicator function with respect to the set $\scrW$.
Then \eqref{eq:CAP} becomes 
\begin{align*}
\min_{w\in\R^{d}}\ \{h(w)+g(Lw)+f(w)\},\quad \text{where }g(y_{1},\ldots,y_{l})\eqdef\sum_{i=1}^{l}\norm{y_{i}}.
\end{align*}
This is clearly seen to be a special instance of the convex programming problem \eqref{eq:primal}. Specifically, we let $\setH_{1}=\R^{d}$ with the standard Euclidean norm, and $\setH_{2}=\R^{k}\underbrace{\times\cdots\times}_{l-\text{times}}\R^{k}$ with the product norm 
\[
\norm{(y_{1},\ldots,y_{l})}_{\setH_{2}}\eqdef \sum_{i=1}^{l}\norm{y_{i}}_{2}.
\]
Since 
\[
g^{\ast}(v_{1},\ldots,v_{l})=\sum_{i=1}^{l}\delta_{\B(0,1)}(v_{i})\qquad\forall v=(v_{1},\ldots,v_{l})\in\R^{k}\underbrace{\times\cdots\times}_{l-\text{times}}\R^{k},
\]
the Fenchel-dual takes the form \eqref{eq:dual}.  Accordingly, a primal-dual pair for \eqref{eq:CAP} is a root of the monotone inclusion \eqref{eq:MI} with 
 \[
    V(w,v)= ( \nabla h(w)+L^{\ast}v, -Lw ) \text{ and }    T(w,v)\eqdef  \partial f(w)\times \partial g^{\ast}(v)
\]
involving $d+kl$ variables. 

%
%
%We note that
%\eqref{gl} can be formulated as a stochastic variational inequality problem where $\xi = (a, f )$, 
%\begin{align*}
%F(u;\xi) \triangleq \left(
%\begin{matrix}
%(\langle a,x \rangle-f)a+K^Ty \\
%-Kx
%\end{matrix}\right), \quad \forall u=(x,y) \in Z:=\scrX \times \scrY,
%\end{align*}
%$K$ is a linear operator defined by $Kx \triangleq \eta(x^T_{\textsl{g}_1},\dots,x^T_{\textsl{g}_l})^T$, $\textsl{g}_i \in \mathcal{S}$, 
%and $\mathcal{S} \triangleq \{\textsl{g}_i\}_{i=1}^l$. The set $\scrY$ represents the set of vectors $y \in \Real^{kl}$ defined by $l$ sub-vectors in the $k$ dimensional unit ball:
%\begin{align*}
%\scrY \triangleq \left\{ y\in\Real^{kl} \mid \|(y^{ki-k+1},y^{ki-k+2},\dots,y^{(ki)})^T\|_2 \le 1\right\}.
%\end{align*}

\noindent {\em Problem parameters for \eqref{eq:CAP}:} We simulated data with $d = 82$, covered by 10 groups of 10 variables with 2 variables of overlap between two successive groups: $\{1,\dots,10\},\{9,\dots,18\},\dots,\{73,\dots,82\}$. We assume the nonzeros of $w_{\rm true}$ lie in the union of groups 4 and 5 and sampled from i.i.d. Gaussian variables. The operator $V(w,v)$ is estimated by the mini-batch estimator using $m_{k}$ iid copies of the random input-output pair $\xi=(a,b)\in\R^{d}\times\R$. Specifically, we draw each coordinate of the random vector $a$ from the standard Gaussian distribution $\Normal(0,1)$ and generate $b=a^{\top}w_{\rm true}+\eps$, for $\eps\sim \Normal(0,\sigma^{2}_{\eps})$. In the concrete experiment \us{reported} here, the error variance is taken as $\sigma_{\eps}=0.1$. In all instances, the regularization parameter is chosen as $\eta = 10^{-4}$. The accuracy of feature extraction of algorithm output $w$ is evaluated by the relative error to the ground truth, defined as 
$$\frac{\|w-w_{\rm true}\|_2}{\|w_{\rm true}\|_2}.$$

\smallskip

\noindent {\em Algorithm specifications.} 
We compare (RISFBF) with stochastic extragradient (SEG) and stochastic forward-backward-forward (SFBF) schemes and specify their algorithm parameters.  Again, all the schemes are run on MATLAB 2018b on a PC with 16GB RAM and 6-Core Intel Core i7 processor
(2.6$\times$8GHz). 

\smallskip

%We compare the following algorithms. 

\noindent {(i) {\bf (SEG)}: Variance-reduced stochastic extragradient scheme.}  {Set $\scrX\eqdef \scrW\times\dom(g^{\ast})$. The {\bf (SEG)} scheme \cite{IusJofOliTho17} utilizes update (SEG).
\begin{align}\tag{SEG} 
\begin{aligned}
    Y_{k} &:= \Pi_{\scrX} \left[X_k - \lambda_k \us{A_{k}(X_k)}\right],\\
    X_{k+1} &:= \Pi_{\scrX} \left[X_k - \lambda_k \us{B_{k}(Y_{k})}\right],
\end{aligned}
\end{align}
\us{where $A_{k}(X_{k})=\frac{1}{m_{k}}\sum_{t=1}^{m_{k}}V(X_k,\xi_k)$, $B_{k}(Y_k)=\frac{1}{m_{k}}\sum_{t=1}^{m_{k}}V(Y_k,\eta_k)$}. In this scheme, $\lambda_k\equiv\lambda$ is chosen to be $\tfrac{1}{4\us{L_V}}$ ($\us{L_V}$ is the Lipschitz constant of $V$). We assume $m_k=\left\lfloor \tfrac{k^{1.1}}{n}\right\rfloor$.

\noindent {(ii) {\bf (SFBF)}: Variance-reduced stochastic modified forward-backward scheme. We employ the algorithm paramters employed in (i). Specifically, we  
 choose a constant $\lambda_k\equiv\lambda=\tfrac{1}{4\us{L_V}}$  and $m_k=\left\lfloor \tfrac{k^{1.1}}{n}\right\rfloor$. \\
 
  \noindent {(iii) {\bf (RISFBF)}: Relaxed inertial stochastic forward-backward-forward scheme. Here, 
we employ a constant steplength $\lambda_k\equiv\lambda=\tfrac{1}{4\us{L_V}}$, an increasing sequence $\alpha_k=\alpha_0(1-\tfrac{1}{k+1})$, where $\alpha_0=0.85$, a relaxation parameter sequence $\rho_k=\tfrac{3(1-\alpha_0)^2}{2(2\alpha_k^2-\alpha_k+1)(1+\us{L_V}\lambda)}$, and  assume $m_k=\left\lfloor \tfrac{k^{1.1}}{n}\right\rfloor$.

\begin{table}[h] 
\caption{The comparison of the RISFBF, SFBF and SEG algorithms in solving \eqref{eq:CAP} \label{rss}}
\small
\begin{center}
\begin{threeparttable} 

%\vspace{-0.3in}
    \begin{tabular}[t]{ l c  l  r c l  rc  l  c }
    Iteration && RISFBF & && SFBF & && SEG & \\ \cline{3-4} \cline{6-7} \cline{9-10}
    $N$ && Rel. error  & CPU && Rel. error  & CPU && Rel. error  & CPU \\  \hline
    \phantom v400 && 5.4e-1 & 0.1 && 34.6 & 0.1 && 34.7 & 0.1  \\
    \phantom v800 && 8.1e-3 & 0.5 && 1.1e-1 & 0.5 && 1.5e-1 & 0.5  \\
    1200 && 6.0e-3 & 1.1 && 2.4e-2 & 1.1 && 2.4e-2 & 1.1  \\
    1600 && 5.2e-3 & 2.0 && 2.0e-2 & 2.0 && 1.9e-2 & 2.0  \\
    2000 && 4.6e-3 & 3.1 && 1.6e-2 & 3.1 && 1.5e-2 & 3.1  \\ \hline

\end{tabular}
\begin{tablenotes}
\small
\item The relative error and CPU time in the table is the average results of 20 runs
    \end{tablenotes}
  \end{threeparttable}
\end{center}
\end{table}

\noindent {\bf Insights.} We compare the performance of the schemes  in Table \ref{rss} \us{and}
observe that (RISFBF) outperforms \us{its competitors} others in extracting the underlying feature of the datasets. In Fig. \ref{tra}, trajectories for (RISFBF), (SFBF) and (SEG) are presented \us{where a consistent benefit of employing (RISFBF) can be seen for a range of choices of $\alpha_0$.}
\begin{figure}[htbp]
\centering
\includegraphics[width=.48\textwidth,height=5.2cm]{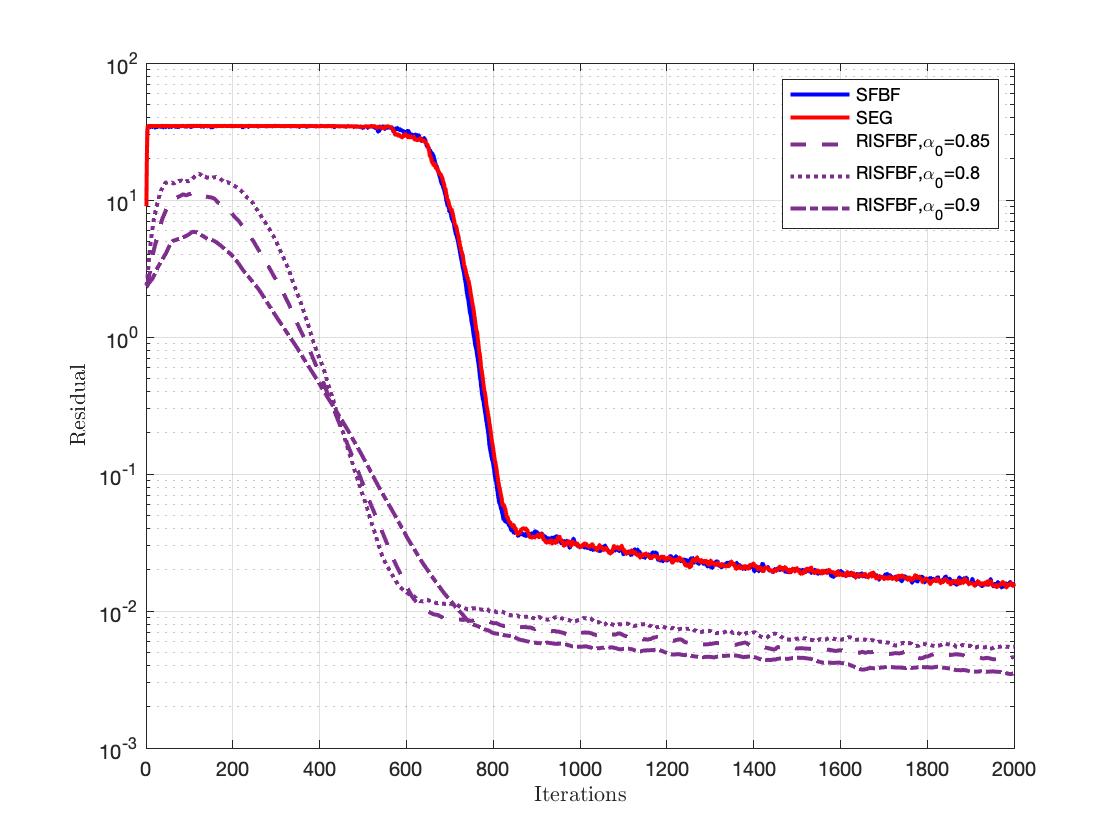}
\caption{Trajectories for (SEG), (SFBF), and (RISFBF) for problem \eqref{eq:CAP}. }
\label{tra}
\vspace{-0.2in}
\end{figure}

%----------------------------------------------------------------------
%%% CONCLUSIONS
%----------------------------------------------------------------------
\section{Conclusion}
\label{sec:conclusion}
%%% Conclusion
%--------------------------------------------------------------------
% !TEX root = ./FBF_inertial.tex
%

In a general structured monotone inclusion setting in Hilbert spaces, we
introduce a relaxed inertial stochastic algorithm based on Tseng's
forward-backward-forward splitting method. Motivated by the gaps in convergence
claims and rate statements in both deterministic and stochastic regimes, we
develop a variance-reduced framework and make the following contributions: (i)
Asymptotic convergence guarantees are provided under both increasing and
constant mini-batch sizes, the latter requiring somewhat stronger assumptions
on $V$; (ii)  When $V$ is monotone, rate statements   provided in terms of a restricted gap function,
inspired by the Fitzpatrick function for inclusions, show that the expected gap
of an averaged sequence diminishes at the rate of $\mathcal{O}(1/k)$ and oracle
complexity of computing an $\epsilon$-solution is
$\mathcal{O}(1/\epsilon^{1+a})$ where $a> 1$; (iii) When $V$ is strongly monotone, a non-asymptotic linear rate statement can be proven with an oracle complexity of $\mathcal{O}(\log(1/\epsilon))$ of computing an $\epsilon$-solution. \us{In addition, a perturbed linear rate is also developed. It is worth emphasizing that the rate statements in the strongly monotone regime accommodate the possibility of a biased SO.} Unfortunately, the growth rates in batch-size may be onerous in some situations, motivating the analysis of a polynomial growth rate in sample-size which is easily modulated. This leads to an associated polynomial rate of convergence.

Various open questions arise from our analysis. First, we exclusively focused
on a variance reduction technique based on increasing mini-batches. From the
point of view of computations and oracle complexity, this approach can become
quite costly. Exploiting different variance reduction techniques, taking
perhaps special structure of the single-valued operator $V$ into account (as in
\cite{PalBac16}), has the potential of improving the computational complexity
of our proposed method. At the same time, this will complicate the analysis of
the variance of the stochastic estimators considerably and \us{consequently,} we leave this as
an important question for future research. 

Second, our analysis needs knowledge about the Lipschitz constant $L$. \us{While in deterministic regimes, line search techniques have obviated such a need, such avenues are far more challenging to adopt in stochastic regimes. Efforts to address this in variational regimes have centered around leveraging empirical process theory~\cite{Iusem:2019ws}. This remains a goal of future research.}  \us{Another avenue emerges in 
applications} where we can gain a reasonably good
estimate about this quantity via some pre-processing of the data (see e.g.
Section 6 in \cite{GhaLanHon16}). \us{Developing such an adaptive framework} robust 
to noise is an important topic for future research.

\section*{Acknowledgments}
The authors thank Radu I. Bo\k{t} and Robert E. Csetnek for valuable discussions on this topic. Special thanks to Patrick Johnstone for helping us to clarify the discussion in Section 4.2.

%*************************************************************
%*****    APPENDICES
%*************************************************************
\begin{appendix}
\section{Technical Facts}
\label{sec:techncial}
%%% appendix
%--------------------------------------------------------------------
% !TEX root = ./FBF_inertial.tex
%

\begin{lemma}\label{lem:ab}
For $x,y\in\setH$ and scalars $\alpha,\beta\geq 0$ with $\alpha+\beta=1$, it holds that 
\begin{equation}
\norm{\alpha x+\beta y}^2 = \alpha\norm{x}^2 + \beta\norm{y}^2- \alpha \beta \norm{x-y}^2.
\end{equation}
\end{lemma}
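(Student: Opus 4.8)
The plan is to establish this identity purely algebraically, by expanding both sides in terms of the inner product and reconciling them through the affine constraint $\alpha+\beta=1$. Everything in sight is a low-degree polynomial in $\alpha,\beta$ and in the three scalars $\inner{x,x}$, $\inner{x,y}$, $\inner{y,y}$, so no analytic tools are needed; bilinearity and symmetry of $\inner{\cdot,\cdot}$ suffice.

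First I would expand the left-hand side via bilinearity:
\[
\norm{\alpha x+\beta y}^2=\inner{\alpha x+\beta y,\alpha x+\beta y}=\alpha^2\norm{x}^2+2\alpha\beta\inner{x,y}+\beta^2\norm{y}^2.
\]
Next I would expand the right-hand side, using $\norm{x-y}^2=\norm{x}^2-2\inner{x,y}+\norm{y}^2$, which gives
\[
\alpha\norm{x}^2+\beta\norm{y}^2-\alpha\beta\norm{x-y}^2=\alpha(1-\beta)\norm{x}^2+\beta(1-\alpha)\norm{y}^2+2\alpha\beta\inner{x,y}.
\]
Finally I would invoke the constraint $\alpha+\beta=1$, so that $1-\beta=\alpha$ and $1-\alpha=\beta$; substituting these into the last display collapses the linear coefficients into the squared coefficients $\alpha^2$ and $\beta^2$, matching the expansion of the left-hand side term by term.

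There is essentially no obstacle here: the statement is a standard convex-combination (parallelogram-type) identity, and the only point deserving the slightest care is the correct use of $\alpha+\beta=1$ to rewrite $\alpha(1-\beta)$ and $\beta(1-\alpha)$ as $\alpha^2$ and $\beta^2$. An equivalent and even shorter route I could take instead is to set $\beta=1-\alpha$, write $\alpha x+\beta y=y+\alpha(x-y)$, expand the resulting square, and symmetrize; either path yields the claimed equality directly.
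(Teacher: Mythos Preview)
Your proof is correct. The paper states this lemma as a standard technical fact in the appendix without supplying any proof, so there is nothing to compare against; your direct bilinear expansion together with the substitution $1-\beta=\alpha$, $1-\alpha=\beta$ is exactly the kind of routine verification the authors evidently deemed unnecessary to include.
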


We recall the Minkowski inequality: For $X,Y\in L^{p}(\Omega,\scrF,\Pr;\setH),\scrG\subseteq\scrF$ and $p\in[1,\infty]$, %we have  
%---------------------------------------
\begin{equation}
\label{eq:Minkowski}
\Ex[\norm{X+Y}^{p}\vert\scrG]^{1/p}\leq \Ex[\norm{X}^{p}\vert\scrG]^{1/p}+\Ex[\norm{Y}^{p}\vert\scrG]^{1/p}.
\end{equation}
%----------------------------------------------

In the convergence analysis, we use the Robbins-Siegmund Lemma~\cite[Lemma 11,~pg.~50]{Pol87}.
\begin{lemma}[Robbins-Siegmund]
\label{lem:RS}
Let $(\Omega,\scrF,\F=(\scrF_{n})_{n\geq 0},\Pr)$ be a discrete stochastic basis. Let $(v_{n})_{n\geq 1},(u_{n})_{n\geq 1}\in\ell^{0}_{+}(\F)$ and $(\theta_{n})_{n\geq 1},(\beta_{n})_{n\geq 1}\in\ell^{1}_{+}(\F)$ be such that  for all $n \geq 0$,
$$
\Ex[v_{n+1}\vert\scrF_{n}]\leq(1+\theta_{n})v_{n}-u_{n}+\beta_{n}\qquad  \Pr-\text{a.s. }.
$$
Then $(v_{n})_{n\geq 0}$ converges a.s. to a random variable $v$, and $(u_{n})_{n\geq 1}\in\ell^{1}_{+}(\F)$.
\end{lemma}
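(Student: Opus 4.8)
The plan is to reduce the given almost-supermartingale inequality to a genuine nonnegative supermartingale and then invoke the classical (nonnegative) supermartingale convergence theorem; the only real subtlety is that $\sum_n\theta_n$ and $\sum_n\beta_n$ are finite only $\Pr$-almost surely rather than in expectation, which I will handle by a localization argument.

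First I would absorb the multiplicative factor $(1+\theta_n)$. Since $(\theta_n)\in\ell^{1}_{+}(\F)$, the predictable product $\alpha_n\eqdef\prod_{k=0}^{n-1}(1+\theta_k)^{-1}$ is nonincreasing, takes values in $(0,1]$, and converges $\Pr$-a.s. to a strictly positive limit $\alpha_\infty$; moreover $\alpha_{n+1}$ is $\scrF_n$-measurable and satisfies $\alpha_{n+1}(1+\theta_n)=\alpha_n$. Multiplying the hypothesis by $\alpha_{n+1}$ and writing $v'_n\eqdef\alpha_n v_n$, $u'_n\eqdef\alpha_{n+1}u_n$, $\beta'_n\eqdef\alpha_{n+1}\beta_n$ yields the cleaner recursion $\Ex[v'_{n+1}\vert\scrF_n]\le v'_n-u'_n+\beta'_n$, where $u'_n\ge0$ and, because $\alpha_{n+1}\le1$, $\sum_n\beta'_n<\infty$ $\Pr$-a.s.

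Next I would pass to a supermartingale. Setting $U_n\eqdef\sum_{k<n}u'_k$ and $A_n\eqdef\sum_{k<n}\beta'_k$, both nondecreasing and $\scrF_{n-1}$-measurable with $A_\infty<\infty$ a.s., a direct computation shows $Z_n\eqdef v'_n+U_n-A_n$ satisfies $\Ex[Z_{n+1}\vert\scrF_n]\le Z_n$ together with $Z_n\ge-A_n\ge-A_\infty$. The main obstacle is precisely that $A_\infty$ is only a.s. finite, so $Z_n$ is not bounded below by a deterministic constant and the convergence theorem does not apply directly. I would circumvent this by localization: for $m\in\N$ define $\tau_m\eqdef\inf\{n:A_{n+1}>m\}$, which is a stopping time because $A_{n+1}$ is $\scrF_n$-measurable. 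On $\{n<\tau_m\}$ one has $A_n\le m$, so the stopped process $Z_{n\wedge\tau_m}+m$ is a nonnegative supermartingale and hence converges a.s.; consequently $Z_n$ converges a.s. on $\{\tau_m=\infty\}$. Since $A_\infty<\infty$ a.s. forces $\Pr(\bigcup_m\{\tau_m=\infty\})=1$, the limit $Z_\infty\eqdef\lim_n Z_n$ exists $\Pr$-a.s.

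Finally I would read off both conclusions. Because $A_n\to A_\infty$ (finite), the quantity $v'_n+U_n$ converges a.s.; as $U_n$ is nondecreasing and $v'_n\ge0$, this forces $U_\infty=\sum_k u'_k<\infty$ a.s., and then $v'_n$ itself converges a.s. to a finite limit. Dividing by $\alpha_n\to\alpha_\infty\in(0,\infty)$ gives a.s. convergence of $v_n$ to a finite random variable $v$, while $\sum_n u_n=\sum_n u'_n/\alpha_{n+1}<\infty$ a.s. follows from $\alpha_{n+1}$ being eventually bounded away from $0$; the latter is exactly the assertion $(u_n)\in\ell^{1}_{+}(\F)$.
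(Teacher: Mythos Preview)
Your proof is correct and follows the standard route to the Robbins--Siegmund lemma: normalize away the multiplicative $(1+\theta_n)$ factors via the predictable product $\alpha_n$, build the shifted supermartingale $Z_n=v'_n+U_n-A_n$, localize with the stopping times $\tau_m$ to cope with the merely almost-sure finiteness of $A_\infty$, and then read off convergence of $v_n$ and summability of $u_n$ from the convergence of $Z_n$ together with the monotonicity of $U_n$ and the finiteness of $A_\infty$.

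There is nothing to compare against in the paper itself: the authors do not prove Lemma~\ref{lem:RS} but simply invoke it as a known result, citing \cite[Lemma~11, p.~50]{Pol87}. Your argument is precisely the classical proof that underlies that reference, so there is no discrepancy in approach---only the fact that you have supplied what the paper takes as a black box.
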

%--------------------------
The next technical lemma will be needed in deriving a linear convergence result. 

\begin{lemma}\label{lem:geometric}
Let $z\geq 0$ and $0<q<p<1$. Then, if $D\geq \frac{1}{\exp(1)\ln(p/q)}$, it holds true that $zq^{z}\leq D p^{z}$ for all $z\geq 0$.
\end{lemma}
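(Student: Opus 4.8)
The plan is to reduce the claim to a one-variable unconstrained maximization and then locate the maximizer by elementary calculus. First I would observe that since $p^{z}>0$, the inequality $zq^{z}\leq Dp^{z}$ is equivalent to $z(q/p)^{z}\leq D$ for all $z\geq 0$. Setting $r\eqdef q/p$, the hypothesis $0<q<p<1$ gives $r\in(0,1)$, hence $\ln r<0$ and $\ln(p/q)=-\ln r>0$; so it suffices to prove $z r^{z}\leq D$ for every $z\geq 0$ whenever $D\geq\frac{1}{\exp(1)\ln(1/r)}$.

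Next I would introduce the auxiliary function $\psi(z)\eqdef z r^{z}=z\exp(z\ln r)$ on $[0,\infty)$ and compute its global maximum. Differentiating,
\[
\psi'(z)=r^{z}\bigl(1+z\ln r\bigr),
\]
which vanishes precisely at the unique critical point $z^{\ast}=-\frac{1}{\ln r}=\frac{1}{\ln(1/r)}>0$. Since $r^{z}>0$ throughout and $1+z\ln r$ is strictly decreasing in $z$, the factor $\psi'(z)$ is positive on $[0,z^{\ast})$ and negative on $(z^{\ast},\infty)$; thus $\psi$ increases then decreases, and $z^{\ast}$ is the global maximizer over $z\geq 0$ (the boundary values $\psi(0)=0$ and $\lim_{z\to\infty}\psi(z)=0$ confirm this).

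Then I would simply evaluate the maximal value:
\[
\psi(z^{\ast})=z^{\ast}r^{z^{\ast}}=\frac{1}{\ln(1/r)}\exp\!\bigl(z^{\ast}\ln r\bigr)=\frac{1}{\ln(1/r)}\exp(-1)=\frac{1}{\exp(1)\ln(1/r)}=\frac{1}{\exp(1)\ln(p/q)}.
\]
Consequently, for any $D\geq\frac{1}{\exp(1)\ln(p/q)}$ we have $z r^{z}=\psi(z)\leq\psi(z^{\ast})\leq D$ for all $z\geq 0$, and multiplying back by $p^{z}$ yields $zq^{z}\leq Dp^{z}$, as claimed. There is no genuine obstacle in this argument; it is routine optimization, and the only point requiring a moment of care is justifying that $z^{\ast}$ is a global (not merely local) maximum, which the monotonicity of $1+z\ln r$ together with the vanishing limits at the endpoints settles cleanly.
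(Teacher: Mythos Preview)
Your proof is correct and follows essentially the same route as the paper: both reduce to maximizing $z(q/p)^{z}$ over $z\geq 0$ by elementary calculus and identify the maximum value as $\frac{1}{e\ln(p/q)}$. The only cosmetic difference is that the paper takes logarithms first and minimizes the resulting convex function $f(z)=\ln D-\ln z-z\ln(q/p)$, whereas you optimize $\psi(z)=z(q/p)^{z}$ directly; your version is arguably cleaner.
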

\begin{proof}
We want to find a positive constant $D_{\min}>0$ such that $D_{\min}\exp(z\ln(p))= z\exp(z\ln(q))$ for all $z>0$. Choosing $D$ larger than this, gives a valid value. Rearranging, this is equivalent to $D=z\left(\frac{q}{p}\right)^{z}\geq 0$ for all $z\geq 0$, or, which is still equivalent to 
$\ln(D)-\ln(z)-z\ln(q/p)=0.$ Define the extended-valued function $f:[0,\infty)\to[-\infty,\infty]$ by 
$f(z)=\ln(D)-\ln(z)-\ln(q/p)$ if $z>0$, and $f(z)=\infty$ if $z=0$. Then, for all $z>0$, simple calculus show $f'(z)=-1/z-\ln(q/p)$ and $f''(z)=1/z^{2}$. Hence, $z\mapsto f(z)$ is a convex function with a unique minimum $z_{\min}=\frac{1}{\ln(p/q)}>0$ and a corresponding function value $f(z_{\min})=\ln(D)+\ln(\ln(p/q))+1$. Hence, for $D\geq D_{\min}=\frac{1}{\exp(1)\ln(p/q)}$, we see that $f(z_{\min})>0$, and thus $zq^{z}\leq D p^{z}$ for all $z\geq 0$. 
\qed
\end{proof}

\end{appendix}

%*************************************************************
%*****    BIBLIOGRAPHY
%*************************************************************
\bibliographystyle{plainnat}
\bibliography{mybib}
\end{document}